\documentclass[11pt]{amsart}
\usepackage{etex}

\setlength{\oddsidemargin}{-0in}
\setlength{\evensidemargin}{-0in}

\setlength{\textwidth}{6.5in}

\pagestyle{headings}

\usepackage{amsmath}

\usepackage{mathrsfs, euscript}
\usepackage{bbm}
\usepackage{amssymb}
\usepackage{tikz} 
\usepackage{amscd}

\usepackage[all,cmtip]{xy}

\usepackage[T1]{fontenc}  
\usepackage{textcomp}     

\usepackage{enumerate}

\textwidth 160mm
\textheight 220mm
\topmargin 0mm
\oddsidemargin 0pt
\evensidemargin 0pt	

\newtheorem{theorem}{Theorem}[section]
\newtheorem*{theorem*}{Theorem}

\newtheorem{lemma}[theorem]{Lemma}
\newtheorem*{lemma*}{Lemma}
\newtheorem{corollary}[theorem]{Corollary}
\newtheorem{proposition}[theorem]{Proposition}
\newtheorem{defprop}[theorem]{Proposition-Definition}
\newtheorem{remark}[theorem]{Remark}
\newtheorem{definition}[theorem]{Definition}


\newcommand{\J}{\mathcal{J}}
\newcommand{\I}{\mathcal{I}}

\newcommand{\LL}{\mathcal{L}}

\newcommand{\f}{\mathcal{F}}
\newcommand{\bb}{\mathcal{B}}
\newcommand{\A}{\mathcal{A}}
\newcommand{\B}{B}
\newcommand{\E}{\mathbb{E}}

\newcommand{\C}{\mathbb{C}}

\newcommand{\mrx}{\mathring{\mathcal{X}}}
\newcommand{\X}{\mathcal{X}}

\newcommand{\talpha}{\tilde{\alpha}}

\usepackage[all]{xy}

\begin{document}
\unitlength=1mm
\special{em:linewidth 0.4pt}
\linethickness{0.4pt}
\title[Free-Boolean]{Free-Boolean independence with amalgamation}
\author{Weihua Liu}
\address{Weihua Liu: Department of Mathematics\\ Indiana University\\ Boomington, IN 47401, USA.\\ }
\email{liuweih@indiana.edu}
\author{Ping Zhong}
\address{Ping Zhong: Department of Pure Mathematics\\ University of Waterloo\\ 200 University Avenue West\\
Waterloo, ON, N2L 3G1, Canada.}
\email{ping.zhong@uwaterloo.ca}
\maketitle
\begin{abstract}
In this paper, we develop the notion of free-Boolean independence in an amalgamation setting.  We construct free-Boolean cumulants and show that the vanishing of mixed free-Boolean cumulants is equivalent to our free-Boolean independence with amalgamation.  
We also provide a characterization of free-Boolean independence by conditions in terms of mixed moments.  
In addition, we study free-Boolean independence over a $C^*$-algebra and prove a positivity property.
\end{abstract}

\section{Introduction}
Free probability theory is a probability theory that studies
noncommutative random variables with highest noncommutativity.  
This theory, due to Voiculescu, is based on the notion of free independence which  is an analogue of the classical independence.
In \cite{Voi3},  Voiculescu generalized his  notion of free independence to free independence with amalgamation over an arbitrary algebra in details.
To be specific,  moments of random variables are no longer scalar numbers but elements from a given algebra.   On the other aspect, Voiculescu started to 
study pairs of random variables  simultaneously thereby generalized the notion of free independence to a notion of bi-free independence \cite{Voi1}. 
Further more, the notion of bi-free independence with amalgamation, defined by Voiculescu \cite{Voi1}, was fully developed in \cite{CNS1}. 
there are exactly two unital universal independence relations, namely Voiculescu's free independence relation, the classical independence relation \cite{Sp}.
It was  mentioned that  we would obtain more independence relations by decreasing the number of axioms for universal products \cite{Liu3}.
 For instance, people introduced Boolean independence \cite{SW}, monotone independence \cite{Mu}, conditionally independence \cite{BLS} in various contexts.  
 Their operator-valued generalization were studied as well \cite{Ml,Po,Po1}.  
 Recently,  their corresponding independence relations for pairs of random variables, analog of Voiculescu's bi-free theory, were introduced and studied \cite{GS, GHS,  GS1}.  
Furthermore, the conditionally bi-free independence with amalgamation is studied in \cite{GS2}.

In \cite{Liu3},  the first-named author introduced a notion of mixed independence relations for pairs of random variables,
where random variables in different faces exhibit different kinds of noncommutative independence.
In particular, the combinatorics of free-Boolean independence relation was fully developed. 
In this paper, we generalize the notion of free-Boolean independence to an amalgamation setting. 
 Relevant combinatorial tools are extended to study this new independence.
Beyond the corresponding combinatorial results, we address the positivity of 
free-Boolean independence with amalgamation. 
Therefore, it is possible to study the relation in  topological probability spaces but not only algebraic probability spaces. 
For instance, we can study our free-Boolean independence with amalgamation over a $C^*$-algebra, 
which is a suitable framework to address some probabilistic questions.
We plan to study probabilistic results such as operator-valued infinitely divisible laws in a forthcoming paper.

The paper is organized as follows: 
Besides this introduction, in Section 2, we give the definition of free-Boolean 
independence with amalgamation over an algebra.
In Section 3, we review some relevant combinatorial tools. 
In Section 4, we demonstrate that free-Boolean independence can be 
characterized by the property of the vanishing of mixed free-Boolean cumulants.
In Section 5, we prove an operator-valued version of free-Boolean central limit law. 
In Section 6, we provide an equivalent characterization of free-Boolean independence 
by certain moments-conditions.
In Section 7,  we study the  positivity property for free-Boolean independence relation.

\section{Preliminaries and Notations}
In this section, we give the motivation and the definition for free-Boolean independence relation with amalgamation over an algebra.
\begin{definition}\normalfont
	A $\B$-$\B$-bimodule with a specified 
	projection is a triple $(\X, \mrx, p)$, where $\X$ is a direct sum of $\B$-$\B$-bimodules
	$\X = B \oplus \mrx$,
	and $p : \X \to B$ is the projection
	\[
	p(b \oplus \eta) = b.
	\] 
	Denote by $\LL(\X)$ the algebra of linear operators with respect 
	to the $\B$-$\B$-bimodule structure. The \emph{expectation}
	from $\LL(\X)$ onto $\B$ is the linear map
	$\E_{\LL(\X)}:\LL(\X)\rightarrow\B$ defined by
	\[
	\E_{\LL(\X)}(a)=p(a(1_{\B}\oplus 0)). 
	\]
\end{definition}

We now recall the definition of the reduced free product 
of $\B$-$\B$-bimodules with specified projections \cite{Sp1, Voi3}.
Let $\{(\X_i, \mathring{\X}_i, p_i)\}_{i \in \I}$ be a family of  $\B$-$\B$-bimodules with specified projections.  
The \emph{reduce free product of $\{(\X_k, \mathring{\X}_i, p_i)\}_{i \in \I}$ with amalgamation over $\B$} is defined to be the $\B$-$\B$-bimodule with a specified projection $(\X, \mrx, p)$,  where 
$\X = \B \oplus \mrx$ and $\mrx$ is the $\B$-$\B$-bimodule defined by
\begin{align*}
\mrx= \bigoplus\limits_{n\geq 1}\bigoplus_{i_1\neq i_2\neq\cdots\neq i_n} \mrx_{i_1}\otimes_B\cdots\otimes_B\mrx_{i_n}.
\end{align*}
For each $i\in\I$, we denote by
\[
\X(i)=\B\oplus\bigoplus_{n\geq 1}\bigoplus_{\substack{i_1\neq i_2\neq \cdots \neq i_n\\i_1\neq i}}
\mrx_{i_1}\otimes_B\cdots\otimes_B\mrx_{i_n},
\]
and let $V_i$ be the natural isomorphism of bimodules
\begin{align*}
V_i :  \X   \to \X_i\otimes_\B \X(i).
\end{align*}
For each $i\in\I$,  $\lambda_i:\LL(\X_i)\rightarrow\LL(\X)$ is a unital homomorphism defined by
\[
\lambda_i(a)=V_i^{-1}(a\otimes I)V_i
\]
and $\beta_i:\LL(\X_i)\rightarrow\LL(\X)$ is a linear map defined by 
\[
\beta_i(a)=P_i\lambda_i(a)P_i,
\]
where $P_i:\X\rightarrow \B\oplus \mrx_i$ is the 
natural projection onto $\B\oplus \mrx_i$ and vanishes on the other direct summands.

\begin{proposition}\label{simple Boolean}
For any $a\in \LL(X_i)$, we have
$P_i\lambda_i(a)=\lambda_i(a)P_i$. 
\end{proposition}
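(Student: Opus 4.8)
The plan is to transport the projection $P_i$ through the isomorphism $V_i$ and to recognize its image as an operator acting only on the second tensor leg of $\X_i\otimes_\B\X(i)$. Since $\lambda_i(a)=V_i^{-1}(a\otimes I)V_i$ acts only on the first leg, the two will then commute for purely formal reasons. First I would record explicitly how $V_i$ acts on the defining summands of $\X=\B\oplus\mrx$: on $\B$ it sends $b$ to $1_\B\otimes b$ (second leg in the $\B$-summand of $\X(i)$); on $\mrx_i$ it sends $\eta_i$ to $\eta_i\otimes 1_\B$ (again second leg in $\B$); a reduced word $\eta_{i_1}\otimes_\B\cdots\otimes_\B\eta_{i_n}$ with $i_1\neq i$ goes to $1_\B\otimes(\eta_{i_1}\otimes_\B\cdots)$; and a word with $i_1=i$ goes to $\eta_{i_1}\otimes(\eta_{i_2}\otimes_\B\cdots)$. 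The relevant dichotomy in all four cases is whether the $\X(i)$-leg lies in the $\B$-summand of $\X(i)$ or in its complement $\bigoplus_{n\geq1}\bigoplus \mrx_{i_1}\otimes_\B\cdots\otimes_\B\mrx_{i_n}$.

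Next I would let $Q\in\LL(\X(i))$ be the projection onto the $\B$-summand of $\X(i)$; this is a $\B$-$\B$-bimodule map because $\X(i)=\B\oplus\cdots$ is a direct sum of bimodules, so that $I\otimes Q$ is well defined on the balanced tensor product. The claim is that $V_iP_iV_i^{-1}=I\otimes Q$. This is exactly the content of the case analysis above: $P_i$ retains precisely the summands $\B$ and $\mrx_i$ of $\X$, and under $V_i$ these are precisely the vectors whose $\X(i)$-leg lies in $\B$ (with the $\X_i$-leg unconstrained), which is what $I\otimes Q$ retains. Then, because $a\otimes I$ and $I\otimes Q$ act on different legs of $\X_i\otimes_\B\X(i)$, they commute, and one concludes
\begin{align*}
P_i\lambda_i(a)&=V_i^{-1}(I\otimes Q)(a\otimes I)V_i=V_i^{-1}(a\otimes Q)V_i\\
&=V_i^{-1}(a\otimes I)(I\otimes Q)V_i=\lambda_i(a)P_i .
\end{align*}

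The only genuine obstacle is the bookkeeping in the middle step: one must check carefully that $V_i$ carries exactly $\B\oplus\mrx_i$ onto the set of vectors with $\B$-valued second leg, i.e. that the identification $V_iP_iV_i^{-1}=I\otimes Q$ holds on every reduced-word summand, paying attention to the $\B$-balancing that makes $1_\B\otimes b$ and $b\otimes 1_\B$ agree. Once this identification is pinned down, the commutation is immediate, and in fact the same computation shows $\beta_i(a)=P_i\lambda_i(a)P_i=V_i^{-1}(a\otimes Q)V_i$, which is convenient for the Boolean-type estimates later.
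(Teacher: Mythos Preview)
Your argument is correct. The identification $V_iP_iV_i^{-1}=I\otimes Q$ is exactly right: under $V_i$ the summands $\B$ and $\mrx_i$ go to $\X_i\otimes_\B\B$ and everything else goes to $\X_i\otimes_\B\big(\X(i)\ominus\B\big)$, so the projection onto $\B\oplus\mrx_i$ becomes the projection onto the $\B$-leg of $\X(i)$. After that the commutation is tautological.

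The paper's proof takes a slightly different, more bare-hands route: it simply decomposes $\X=(\B\oplus\mrx_i)\oplus\X_i'$ and checks directly that both summands are invariant under $\lambda_i(a)$; commutation with $P_i$ then follows because an operator commutes with a projection exactly when it leaves both its range and its kernel invariant. Your approach and the paper's are equivalent in content---the invariance of the two summands is precisely the statement that $a\otimes I$ preserves the eigenspaces of $I\otimes Q$---but the packaging differs. The paper's version is marginally quicker since it avoids setting up $Q$ and verifying the conjugation identity, while yours has the advantage of yielding the clean formula $\beta_i(a)=V_i^{-1}(a\otimes Q)V_i$, which, as you note, is handy for later computations with Boolean products.
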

\begin{proof}
Notice that the reduced free product $\X$ of $\B$-$\B$-bimodules with specified projections 
can be decomposed as 
\[
\X=(\B\oplus \mrx_i)\oplus\X_i',
\quad \text{where}\quad \X_i'= \bigoplus\limits_{j\neq i}\mrx_j
 \oplus \bigoplus_{n\geq 2}\bigoplus_{\substack{i_1\neq i_2\neq \cdots \neq i_n}}
 \mrx_{i_1}\otimes_B\cdots\otimes_B\mrx_{i_n}.
\]
The space $\B\oplus \mrx_i$ is invariant under $\lambda_i(a)$ for any $a\in\LL(\X_i)$. 
We can check directly that the space $\X_i'$ is also invariant under $\lambda_i(a)$ for any $a\in\LL(\X_i)$
by the definition of $\lambda_i$. Hence the result follows. 
\end{proof}

The preceding result implies the next corollary. 
\begin{corollary}\normalfont
	The map $\beta_i:\LL(\X_i)\rightarrow \LL(\X)$ is a homomorphism.
\end{corollary}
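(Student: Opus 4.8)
The plan is to verify that $\beta_i$ respects products, since linearity is already built into its definition as a composition of linear maps; thus the only thing remaining to establish that $\beta_i:\LL(\X_i)\rightarrow\LL(\X)$ is a homomorphism is the identity $\beta_i(ab)=\beta_i(a)\beta_i(b)$ for all $a,b\in\LL(\X_i)$. The whole argument will rest on two ingredients already available: the idempotency of the projection $P_i$, and the commutation relation $P_i\lambda_i(a)=\lambda_i(a)P_i$ furnished by Proposition \ref{simple Boolean}.

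First I would expand the right-hand side using the definition $\beta_i(a)=P_i\lambda_i(a)P_i$, obtaining
\[
\beta_i(a)\beta_i(b)=P_i\lambda_i(a)P_iP_i\lambda_i(b)P_i.
\]
Since $P_i$ is a projection, $P_iP_i=P_i$, so the two adjacent copies in the middle collapse to a single $P_i$, leaving $P_i\lambda_i(a)P_i\lambda_i(b)P_i$. The key step is then to invoke Proposition \ref{simple Boolean} on the inner factor $P_i\lambda_i(b)$, rewriting it as $\lambda_i(b)P_i$; after this substitution and one more application of idempotency the expression becomes $P_i\lambda_i(a)\lambda_i(b)P_i$. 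Finally, because $\lambda_i$ is a unital homomorphism we have $\lambda_i(a)\lambda_i(b)=\lambda_i(ab)$, so the whole chain reduces to $P_i\lambda_i(ab)P_i=\beta_i(ab)$, which is exactly what is required.

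I do not anticipate any genuine obstacle here: the computation is a two-line manipulation once Proposition \ref{simple Boolean} is in hand. The only point demanding a little care is the bookkeeping of the copies of $P_i$, namely confirming that after applying the commutation relation one is still left with a clean compression $P_i(\,\cdot\,)P_i$ rather than a stray projection sitting in the wrong place; this is resolved entirely by the idempotency $P_i^2=P_i$. Conceptually, the content of the corollary is precisely that the commutation relation $P_i\lambda_i(a)=\lambda_i(a)P_i$ is what upgrades the compression $\beta_i$ from a mere linear map into an algebra homomorphism, so the proof is really just the formal verification of this upgrade.
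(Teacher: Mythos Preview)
Your proof is correct and matches the paper's intended argument: the paper offers no explicit proof beyond noting that the corollary follows from Proposition \ref{simple Boolean}, and your computation is precisely the routine verification that the commutation relation $P_i\lambda_i(a)=\lambda_i(a)P_i$ together with $P_i^2=P_i$ and the multiplicativity of $\lambda_i$ yields $\beta_i(a)\beta_i(b)=\beta_i(ab)$.
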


\begin{definition}\normalfont
	A $B$-valued probability space is a pair $(\A,\E)$ consisting of an algebra  $\A$ over $B$
	 and an $\B$-$\B$-bimodule map $\E:\A\rightarrow\B$, i.e. a linear map such that
	$$\E(b_1 a b_2)=b_1\E(a)b_2$$ for all $a\in\A$ and $b_1, b_2\in\B$.
\end{definition}

\begin{definition}\label{free-BooleanDef}\normalfont
	Let $(\A,\E)$ be  a $B$-valued probability space. A family of $\B$-faces of $\A$ is a family $\{(C_i,D_i)\}_{i\in \I}$
	of (not necessarily unital) subalgebras 
	of $\A$ such that $C_i, D_i$ are $\B$-$\B$-bimodules for each $i \in \I$.  The family of $\B$-faces $\{(C_i, D_i)\}_{i\in \I}$ is said to be free-Boolean with amalgamation over $\B$ if 
\begin{itemize}
\item $\{C_i|i\in \I\}$ are unital algebras  
\item  there are $\B$-$\B$-bimodules with specified projections $\{(\X_i, \mrx_i, p_{i})\}_{i\in \I}$ such that there are unital homomorphisms $\gamma_i: C_i\rightarrow \LL(\X_i)$, (not necessarily unital) homomorphisms $\delta_i: D_i\rightarrow \LL(\X_i)$,
\item Let $(\X, \mrx, p)$  be the reduce free product of $\{(\X_i, \mrx_i, p_{i})\}_{i\in \I}$, so that the joint 
	distribution of the family $\{(C_i,D_i)\}_{i\in\I}$ in $(\A,\E)$
	is equal to the joint distribution of the family of operators
	$\{(\lambda_i(\gamma_i(C_i)), \beta_i(\delta_i(D_i))\}_{i\in \I}$ in
	the probability space $(\LL(\X), \E_{\LL(\X)})$. 
	That is, 
	\begin{align*}
	\E_{\LL(\X)}\big(\lambda_{i_1}(\gamma_{i_1}(c_1))
	\beta_{i_1}(\delta_{i_1}(d_1))\cdots
	 \lambda_{i_n}(\gamma_{i_n}(c_n))
	\beta_{i_n}(\delta_{i_n}(d_n))  \big)
	=\E(c_1 d_1\cdots c_n d_n),	
	\end{align*}
	where $c_k\in C_{i_k}, d_k\in D_{i_k}, 1\leq k\leq n.$
\end{itemize}

\end{definition}

\section{ Interval-noncrossing partitions}

In this section, we  review some combinatorial tools which will be used 

to define operator-valued free-Boolean cumulants. We give a  characterization 
of free-Boolean independence with amalgamation thereby generalizes
results in \cite{Liu3} to the operator-valued framework. 
In noncommutative probability theory, non-crossing partitions are used in 
the combinatorics of free probability and the interval partitions are used 
in the combinatorics of Boolean independence. 
It turns out the partitions used in the combinatorics of free-Boolean independence  
are so-called interval-noncrossing partitions introduced in \cite{Liu3}. 
All results without proof in this section are taken from \cite{Liu3}. 
\subsection{Interval-noncrossing partitions}
Throughout this section, we let $n\in\mathbb{N}$, 
$\chi:\{1,\cdots,n\}\rightarrow \{\f, \bb\}$ and $\epsilon:\{1,\cdots, n\}\rightarrow \mathcal{I}$,
for some fixed index set $\mathcal{I}$.
We will denote by $[n]$ the set  $\{1,\cdots,n\}$ for $n\in\mathbb{N}$.

\begin{definition}\label{partition}\normalfont
	Let $S$ be a linearly ordered set.
	A partition $\pi$ of the set $S$ consists of a collection
	disjoint, nonempty sets $\{V_1,\cdots,V_p\}$ whose union  is $S$.  
	The sets $V_1,\cdots,V_p$ are called the blocks of $\pi$. 
	Given $v_1,v_2\in S$, we write $v_1\sim v_2$ if the two elements $v_1, v_2$ are in the same block.
	\begin{itemize}
		\item[1.] A partition $\pi$ is called \emph{noncrossing}
		if there is no  quadruple $(v_1, v_2, w_1, w_2)$ such that
		$ v_1< w_1<v_2<w_2$, $v_1, v_2\in V_s$, and $w_1, w_2\in V_t$,
		where $V_s, V_t$ are two disjoint blocks of $\pi$.
		The set of all noncrossing partitions of $[n]$ will be denoted by $NC(n)$.  
		\item[2.] A block $V_s$ of $\pi$ is called 
		\emph{interval} if for any $v_1, v_2\in V_s$ and $v_1<w<v_2$, we have
		$w\in V_s$. 
		A partition $\pi=\{V_1, \cdots, V_p\}$ is called an \emph{interval} partition
		if all blocks $V_s$ are interval blocks. 
		\item[3.] A block $V_s$ of a partition  $\pi$ is said to be inner if there is another block $V_t\in \pi$ and  $v_1,v_2\in V_t$ such that $v_1<w<v_2$ for all $w\in V_s$. A block is outer if it is not inner.
		\item[4.] Let $\epsilon:[n]\rightarrow \mathcal{I}$.  We denote by $ker(\epsilon)$ the partition whose blocks are the sets $\omega^{-1}(i), i\in \mathcal{I}$. 
		\item[5.] Given two partitions $\sigma$ and $\pi$, we say $\sigma\leq \pi$ if each block of $\sigma$ is contained in a block of $\pi$. This relation is called the reversed refinement order. 
		\item[6.] We denote by $0_n$ the partition of $[n]$ consists of $n$ blocks
		and by $1_n$ the partition of $[n]$ consists of exactly one block. 
	\end{itemize}
\end{definition}

\begin{definition} \normalfont
	Let $\chi:\{1,\cdots,n\}\rightarrow \{\f, \bb\}$. 
	A partition $\pi$ of $[n]$ is said to be \emph{interval-noncrossing} with respect to $\chi$ if $\pi$  is  noncrossing,  and $v_1, v_2, w$ are in the same block
	whenever $v_1<w<v_2$, $v_1\sim v_2$ and $\chi(w)=\bb$. 
	We denote by $INC(\chi)$ the set of all interval-noncrossing partitions of the set $\{1, 2,\cdots,n \}$ with respect to $\chi$.
\end{definition}

\begin{remark}\normalfont
	The set $INC(\chi)$ does not depend on the value of $\chi$ at $1$ and $n$. 
	In particular, when $\chi^{-1}(\bb)\cap [2,n-1]=\emptyset$, we have $INC(\chi)=NC(n)$.
\end{remark}

For example, let $\chi:\{1,\cdots, 7\}\rightarrow\{\f,\bb\}$ such that $\chi^{-1}(\bb)=\{2,6,7\}$. Given two noncrossing  $\pi_1=\{\{1,4\},\{2\},\{3\}, \{5,7\},\{6\}\}$ and 
$\pi_2=\{\{1,2,4\},\{3\},\{5,6,7\}\}$ of the set $\{1,\cdots, 7\}$, then  $\pi_1\not\in INC(\chi)$ and $\pi_2\in INC(\chi)$. In pictures below, we use \textquotedbl$\bullet$\textquotedbl to denote elements in 
$\chi^{-1}(\f)$ and \textquotedbl$\circ$\textquotedbl to denote elements in 
$\chi^{-1}(\bb)$. \\
\begin{picture}(120.00,42.00)(-30.00, 0.00)

\put(0.00,21.00){\line(0,1){6.00}}
\put(0.00,27.00){\line(1,0){18.00}}
\put(12.00,21.50){\line(0,1){3.50}}
\put(18.00,21.00){\line(0,1){6.00}}

\put(6.00,21.50){\line(0,1){3.50}}

\put(24.00,27.00){\line(1,0){12.00}}
\put(24.00,21.50){\line(0,1){5.50}}
\put(30.00,21.50){\line(0,1){3.5}}
\put(36.00,21.50){\line(0,1){5.50}}

\put(0.00,21.00){\circle*{1.00}}
\put(6.00,21.00){\circle{1.00}}
\put(12.00,21.00){\circle*{1.00}}
\put(18.00,21.00){\circle*{1.00}}
\put(24.00,21.00){\circle*{1.00}}
\put(30.00,21.00){\circle{1.00}}
\put(36.00,21.00){\circle{1.00}}

\put(-0.90,17.00) {\footnotesize 1}
\put(05.10,17.00) {\footnotesize 2}
\put(11.10,17.00) {\footnotesize 3}
\put(17.10,17.00) {\footnotesize 4}
\put(23.10,17.00) {\footnotesize 5}
\put(29.10,17.00) {\footnotesize 6}
\put(35.10,17.00) {\footnotesize 7}

\put (6.00, 10.00){Diagram of $\pi_1$.}


\put(70.00,21.00){\line(0,1){6.00}}
\put(70.00,27.00){\line(1,0){18.00}}
\put(82.00,21.00){\line(0,1){4.00}}
\put(88.00,21.00){\line(0,1){6.00}}

\put(76.00,21.50){\line(0,1){5.50}}

\put(94.00,27.00){\line(1,0){12.00}}
\put(94.00,21.50){\line(0,1){5.50}}
\put(100.00, 21.50){\line(0,1){5.50}}
\put(106.00,21.50){\line(0,1){5.50}}

\put(70.00,21.00){\circle*{1.00}}
\put(76.00,21.00){\circle{1.00}}
\put(82.00,21.00){\circle*{1.00}}
\put(88.00,21.00){\circle*{1.00}}
\put(94.00,21.00){\circle*{1.00}}
\put(100.00, 21.00){\circle{1.00}}
\put(106.00, 21.00){\circle{1.00}}

\put(69.0,17.00) {\footnotesize 1}
\put(75.10,17.00) {\footnotesize 2}
\put(81.10,17.00) {\footnotesize 3}
\put(87.10,17.00) {\footnotesize 4}
\put(93.10,17.00) {\footnotesize 5}
\put(99.10,17.00) {\footnotesize 6}
\put(105.10,17.00) {\footnotesize 7}
\put(76.00,10.00){Diagram of $\pi_2$.}

\end{picture}\\
Assume now $\chi^{-1}(\bb)\cap[2,n-1]=\{l_1<\cdots<l_{m-1}\}$
and set $l_0=0, l_m=n$. We denote by $[l_i, l_{i+1}]$ the
interval $\{l_i, l_i+1,\cdots, l_{i+1}\}$. 
For each $i=1,\cdots, m$, we denote by $\alpha_i(\pi)$ the restriction of $\pi$ to the interval
$[l_{i-1},l_{i}]$. 
We also denote by $\alpha'(\pi)$ the 
restriction of $\pi$ to the interval $[l_1,n]$
and $\chi'$ the restriction of $\chi$ 
to the interval $[l_1,n]$. Note that each $\alpha_i(\pi)$ can be any noncrossing partition of the set $[l_{i-1},l_{i}]$,
since there is no $l_{i-1}<w<l_i$ such that $\chi(w)=\bb$. 

\begin{proposition}\label{lattice isomorphism}\normalfont
	Let  $\alpha_1': INC(\chi)\rightarrow NC([1,l_1])\times INC (\chi')$ be
defined by
\[
\alpha_1'(\pi)=\left( \alpha_1(\pi), \alpha'(\pi)\right)
\]
and
$\alpha:INC(\chi)\rightarrow NC([1,l_1])\times NC([l_1,l_2])\times\cdots \times NC([l_{m-1},n])$ be defined by
	$$\alpha(\pi)=(\alpha_1(\pi), \cdots,\alpha_m(\pi)). $$
	Then $\alpha_1'$ and $\alpha$ are   isomorphisms of partial ordered sets. 
	The set $INC(\chi)$ is a lattice with respect to the reverse refinement order $\leq$ on partitions.
\end{proposition}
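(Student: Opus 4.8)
The plan is to exhibit explicit inverse maps built by gluing partitions along the interior Boolean points, and then to read off the lattice structure from the product. I would first establish the statement for $\alpha_1'$, which peels off the first interval, and then obtain $\alpha$ by iterating this splitting (equivalently, inducting on the number $m-1$ of interior Boolean points, noting that $\chi'$ has interior Boolean points exactly $\{l_2,\dots,l_{m-1}\}$). Everything rests on the following reformulation of the interval-noncrossing condition at an interior Boolean point $l_i$: for $\pi\in INC(\chi)$, every block $V$ of $\pi$ either contains $l_i$, or lies entirely in $\{1,\dots,l_i-1\}$, or lies entirely in $\{l_i+1,\dots,n\}$. Indeed, if $V$ contained $v_1<l_i<v_2$, then applying the defining condition with $w=l_i$ (where $\chi(l_i)=\bb$) forces $l_i\in V$. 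Informally, no block may jump over a Boolean point without containing it, which is exactly what lets us cut a partition at $l_i$ and reassemble it without loss of information.

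First I would verify that $\alpha_1'$ is well defined: the restriction of a noncrossing partition to any subset is noncrossing, so $\alpha_1(\pi)\in NC([1,l_1])$ and $\alpha'(\pi)$ is noncrossing, and since the interior Boolean points of $\chi'$ are interior Boolean points of $\chi$, the defining condition is inherited, giving $\alpha'(\pi)\in INC(\chi')$. For the inverse, given $(\sigma,\tau)\in NC([1,l_1])\times INC(\chi')$ I would let $\theta(\sigma,\tau)$ be the partition of $[n]$ generated by the blocks of $\sigma$ together with the blocks of $\tau$, that is, the transitive closure of the relation ``in the same block of $\sigma$, or in the same block of $\tau$''. Because $[1,l_1]$ and $[l_1,n]$ meet only in the point $l_1$, this closure merges precisely the $\sigma$-block through $l_1$ with the $\tau$-block through $l_1$ and leaves every other block intact.

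The crux, and the step I expect to be the main obstacle, is to show that $\theta(\sigma,\tau)$ is noncrossing; the Boolean conditions then follow immediately from the structural observation, so that $\theta(\sigma,\tau)\in INC(\chi)$. Assume for contradiction a crossing $v_1<w_1<v_2<w_2$ with $v_1\sim v_2$ and $w_1\sim w_2$ in $\theta(\sigma,\tau)$. Using that the only block of $\theta(\sigma,\tau)$ meeting both sides of $l_1$ is the one through $l_1$, I would locate $l_1$ among the four points and push the crossing entirely into $[1,l_1]$ or into $[l_1,n]$: any offending point lying on the far side of $l_1$ may be replaced by $l_1$ itself, which belongs to the same glued block, producing a crossing already in $\sigma$ or in $\tau$, a contradiction. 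The same case analysis shows $\alpha_1'\circ\theta=\mathrm{id}$ and $\theta\circ\alpha_1'=\mathrm{id}$; here the structural observation is indispensable, since it guarantees that a block of $\pi$ straddling $l_1$ actually contains $l_1$ and hence reappears, correctly reconnected, after restricting and regluing.

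Finally, both $\alpha_1'$ and $\theta$ preserve the reverse refinement order: restriction obviously preserves $\leq$, and if $\sigma\leq\sigma'$ and $\tau\leq\tau'$ then $\theta(\sigma,\tau)\leq\theta(\sigma',\tau')$, since the generating relations only grow. Thus $\alpha_1'$ is an isomorphism of partially ordered sets. Iterating this splitting, peeling $[l_1,l_2]$ off $INC(\chi')$ and so on (i.e. inducting on $m$), shows that $\alpha$ is an isomorphism onto $NC([1,l_1])\times\cdots\times NC([l_{m-1},n])$. Since each factor $NC([l_{i-1},l_i])$ is a lattice and a finite product of lattices is a lattice under the coordinatewise order, $INC(\chi)$ is a lattice, with meets and joins obtained by transporting those of the product through $\alpha$.
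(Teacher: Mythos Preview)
Your proof is correct. The paper itself does not prove this proposition: it states at the outset of Section~3 that all results without proof in that section are taken from \cite{Liu3}, and Proposition~\ref{lattice isomorphism} is one of these. Your argument---cut at the first interior Boolean point $l_1$, build the inverse by gluing the block through $l_1$ on each side, verify noncrossing by pushing any hypothetical crossing into one of the two pieces, and then iterate---is the natural direct approach and matches what one would expect the original proof to be. The structural observation you isolate (no block of an interval-noncrossing partition can straddle a Boolean point without containing it) is exactly the mechanism that makes the cut-and-glue bijection work, and your order-preservation and lattice conclusions follow cleanly from it.
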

We provide pictures below to illustrate the preceding proposition. Let $n=10$, $\chi^{-1}(\bb)=\{1,3,7,8,9,10\}$ and $\pi=\{\{1,3,4,7\},\{2\},\{5,6\},\{9,8\},\{10\}\}$ which is an interval-noncrossing of the set $\{1,2,\cdots, 10 \}$ with respect to $\chi$ as shown in the following diagram.\\

\begin{picture}(90.00,30.00)(-50.00, 5.00)

\put(0.00,21.50){\line(0,1){7.50}}
\put(12.00,21.50){\line(0,1){7.50}}
\put(18.00,21.00){\line(0,1){8.00}}
\put(0.00,29.00){\line(1,0){36.00}}
\put(36.00,21.50){\line(0,1){7.50}}

\put(6.00,21.00){\line(0,1){4.00}}

\put(24.00,21.00){\line(0,1){4.00}}
\put(24.00,25.00){\line(1,0){6.00}}
\put(30.00,21.00){\line(0,1){4.00}}

\put(48.00,21.50){\line(0,1){7.50}}
\put(42.00,29.00){\line(1,0){6.00}}
\put(42.00,21.50){\line(0,1){7.50}}

\put(54.00,21.50){\line(0,1){7.50}}

\put(0.00,21.00){\circle{1.00}}
\put(6.00,21.00){\circle*{1.00}}
\put(12.00,21.00){\circle{1.00}}
\put(18.00,21.00){\circle*{1.00}}
\put(24.00,21.00){\circle*{1.00}}
\put(30.00,21.00){\circle*{1.00}}
\put(36.00,21.00){\circle{1.00}}
\put(42.00,21.00){\circle{1.00}}
\put(48.00,21.00){\circle{1.00}}
\put(54.00,21.00){\circle{1.00}}

\put(-0.90,17.00) {\footnotesize 1}
\put(05.10,17.00) {\footnotesize 2}
\put(11.10,17.00) {\footnotesize 3}
\put(17.10,17.00) {\footnotesize 4}
\put(23.10,17.00) {\footnotesize 5}
\put(29.10,17.00) {\footnotesize 6}
\put(35.10,17.00) {\footnotesize 7}
\put(41.10,17.00) {\footnotesize 8}
\put(47.10,17.00) {\footnotesize 9}
\put(53.10,17.00) {\footnotesize 10}

\put(15,9){Diagram of $\pi$.}
\end{picture}

In the above diagram, $l_0=1$, 
$l_1=3, l_2=7, l_3=8, l_4=9$, $l_5=10$. Therefore,
$\alpha_1(\pi)=\{\{1,3\},\{2\}\}$,  $\alpha_2(\pi)=\{\{3,4,7\},\{5,6\}\}$, $\alpha_3(\pi)=\{\{7\},\{8\}\}$, $\alpha_4(\pi)=\{\{8,9\}\}$ and $\alpha_5(\pi)=\{\{9\},\{10\}\}$ are illustrated
 in the following diagrams:\\

\begin{picture}(90.00,30.00)(-80.00, 5.00)

\put(-50.00,21.50){\line(0,1){7.50}}
\put(-38.00,21.50){\line(0,1){7.50}}
\put(-50.00,29.00){\line(1,0){12.00}}
\put(-44.00,21.50){\line(0,1){4.00}}

\put(-50.90,17.00) {\footnotesize 1}
\put(-44.90,17.00) {\footnotesize 2}
\put(-38.90,17.00) {\footnotesize 3}

\put(-50.00,21.00){\circle{1.00}}
\put(-44.00,21.00){\circle*{1.00}}
\put(-38.00,21.00){\circle{1.00}}

\put(-48,11){$\alpha_1(\pi)$}

\put(-30.00,21.50){\line(0,1){7.50}}
\put(-24.00,21.50){\line(0,1){7.50}}
\put(-30.00,29.00){\line(1,0){24.00}}
\put(-6.00,21.50){\line(0,1){7.50}}

\put(-18.00,21.00){\line(0,1){4.00}}
\put(-18.00,25.00){\line(1,0){6.00}}
\put(-12.00,21.00){\line(0,1){4.00}}

\put(-30.9,17.00) {\footnotesize 3}
\put(-24.9,17.00) {\footnotesize 4}
\put(-18.9,17.00) {\footnotesize 5}
\put(-12.9,17.00) {\footnotesize 6}
\put(-6.9,17.00) {\footnotesize 7}

\put(-30.00,21.00){\circle{1.00}}
\put(-24.00,21.00){\circle*{1.00}}
\put(-18.00,21.00){\circle*{1.00}}
\put(-12.00,21.00){\circle*{1.00}}
\put(-6.00,21.00){\circle{1.00}}

\put(-23,11){$\alpha_2(\pi)$}

\put(8.00,21.50){\line(0,1){7.50}}
\put(2.00,21.50){\line(0,1){7.50}}

\put(2.00,21.00){\circle{1.00}}
\put(8.00,21.00){\circle{1.00}}

\put(1.10,17.00) {\footnotesize 7}
\put(7.10,17.00) {\footnotesize 8}

\put(0,11){$\alpha_3(\pi)$}

\put(16.00,21.50){\line(0,1){7.50}}
\put(22.00,21.50){\line(0,1){7.50}}
\put(16.00,29.00){\line(1,0){6.00}}

\put(22.00,21.00){\circle{1.00}}
\put(16.00,21.00){\circle{1.00}}

\put(21.10,17.00) {\footnotesize 9}
\put(15.10,17.00) {\footnotesize 8}

\put(14,11){$\alpha_4(\pi)$}

\put(30.00,21.50){\line(0,1){7.50}}
\put(36.00,21.50){\line(0,1){7.50}}

\put(30.00,21.00){\circle{1.00}}
\put(36.00,21.00){\circle{1.00}}

\put(29.10,17.00) {\footnotesize 9}
\put(35.10,17.00) {\footnotesize 10}

\put(28,11){$\alpha_5(\pi)$}

\end{picture}

\begin{proposition}\normalfont
	Let $\pi=\{V_1,\cdots,V_p\}\in INC(\chi)$ and let 
	$\sigma\in NC(n)$ such that $\sigma\leq \pi$, i.e. each
	block of $\sigma$ is contained in a block of $\pi$. 
	Then $\sigma\in INC(\chi)$ if and only if
	$\sigma|_{V_s}\in INC(\chi|_{V_s})$ for all $s=1,\cdots, p$. 
\end{proposition}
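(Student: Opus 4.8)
The plan is to observe that, since every interval-noncrossing partition is by definition noncrossing and $\sigma \in NC(n)$ is already assumed, in both directions it suffices to verify only the \emph{interval} defining condition of $INC$: namely that whenever two elements lie in a common block with a point $w$ satisfying $\chi(w)=\bb$ strictly between them, that point $w$ belongs to the same block. It is also convenient to record at the outset that, because $\sigma \le \pi$, the restriction $\sigma|_{V_s}$ is simply the collection of those blocks of $\sigma$ that are contained in $V_s$, regarded as a partition of $V_s$; in particular each $\sigma|_{V_s}$ is automatically noncrossing, being a subfamily of blocks of the noncrossing partition $\sigma$, so the noncrossing half of the $INC$ condition never needs separate attention.

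For the forward implication I would assume $\sigma \in INC(\chi)$ and fix a block $V_s$ of $\pi$. Suppose $v_1, v_2 \in V_s$ lie in a common block of $\sigma|_{V_s}$ and $v_1 < w < v_2$ with $w \in V_s$ and $\chi|_{V_s}(w)=\bb$. Then $v_1 \sim_\sigma v_2$ and $\chi(w)=\bb$, so the interval condition for $\sigma$ gives $v_1 \sim_\sigma w \sim_\sigma v_2$; since all three points lie in $V_s$, they lie in a common block of $\sigma|_{V_s}$. Hence $\sigma|_{V_s} \in INC(\chi|_{V_s})$. Note that this direction uses only $\sigma \in INC(\chi)$, not the hypothesis on $\pi$.

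For the converse — the direction where the standing assumption $\pi \in INC(\chi)$ is essential — I would assume $\sigma|_{V_s} \in INC(\chi|_{V_s})$ for every $s$ and verify the interval condition for $\sigma$. Take $v_1 < w < v_2$ with $v_1 \sim_\sigma v_2$ and $\chi(w)=\bb$. Because $\sigma \le \pi$, the points $v_1, v_2$ lie in a common block $V_s$ of $\pi$. \textbf{The key step} is that $\pi$ itself is interval-noncrossing: applying the interval condition of $\pi$ to the triple $v_1 < w < v_2$ forces $w \in V_s$ as well. Once $w \in V_s$ is secured, all three points lie in $V_s$, the elements $v_1, v_2$ lie in a common block of $\sigma|_{V_s}$, and $\chi|_{V_s}(w)=\bb$; invoking $\sigma|_{V_s} \in INC(\chi|_{V_s})$ yields $v_1 \sim_\sigma w \sim_\sigma v_2$, which is exactly what the interval condition for $\sigma$ demands.

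The only subtlety — and the single place where the argument could break down — is the membership $w \in V_s$ in the converse: without it one cannot descend to the restriction indexed by $V_s$, and it is precisely here that the hypothesis $\pi \in INC(\chi)$ does the work. Everything else is a routine unwinding of the definition of $INC(\chi)$, together with the elementary observation that, under $\sigma \le \pi$, the blocks of $\sigma$ distribute among the blocks of $\pi$ so that restriction to any $V_s$ preserves both the noncrossing property and the same-block relation.
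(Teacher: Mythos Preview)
Your argument is correct. The paper itself does not supply a proof of this proposition: it is one of the results in Section~3 that are quoted without proof from \cite{Liu3}, so there is no in-paper argument to compare against. Your direct verification of the interval condition in both directions --- with the key observation that $\pi\in INC(\chi)$ is precisely what forces $w\in V_s$ in the converse --- is exactly the natural elementary proof and would serve as a self-contained replacement for the citation.
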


\begin{proposition}\label{canonical isomorphism}	\normalfont
	Let $\pi=\{V_1,\cdots,V_p\}\in INC(\chi)$.  Denote by $[0_n, \pi]$
	the set of all $\sigma\in INC(\chi)$ such that $\sigma\leq \pi$. 
	Then
	\[
	[0_n,\pi]\cong INC(\chi|_{V_1})\times\cdots\times  INC(\chi|_{V_p}). \
	\]
\end{proposition}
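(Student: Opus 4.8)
The plan is to exhibit the stated poset isomorphism explicitly through the restriction map, with the preceding proposition (the restriction criterion) providing the essential bridge between the noncrossing condition on $[n]$ and the interval-noncrossing conditions on the individual blocks. Concretely, I would define
\[
\Phi : [0_n,\pi] \longrightarrow INC(\chi|_{V_1})\times\cdots\times INC(\chi|_{V_p}),
\qquad
\Phi(\sigma) = \left(\sigma|_{V_1},\ldots,\sigma|_{V_p}\right).
\]
First I would check that $\Phi$ is well defined: for $\sigma\in[0_n,\pi]$ we have $\sigma\leq\pi$, so every block of $\sigma$ is contained in some $V_s$ and therefore $\sigma|_{V_s}$ is genuinely a partition of $V_s$; since moreover $\sigma\in INC(\chi)$, the preceding proposition yields $\sigma|_{V_s}\in INC(\chi|_{V_s})$ for each $s$. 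This already places $\Phi(\sigma)$ in the target product.

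For injectivity, I would observe that any $\sigma\leq\pi$ is completely recovered from its restrictions, since its blocks are exactly the union of the blocks of $\sigma|_{V_1},\ldots,\sigma|_{V_p}$; hence $\Phi(\sigma)=\Phi(\sigma')$ forces $\sigma=\sigma'$. Surjectivity is where the one genuine verification lies. Given a tuple $(\sigma_1,\ldots,\sigma_p)$ with $\sigma_s\in INC(\chi|_{V_s})$, I would assemble the partition $\sigma$ of $[n]$ whose blocks are all the blocks of the $\sigma_s$; by construction $\sigma\leq\pi$ and $\sigma|_{V_s}=\sigma_s$. To invoke the preceding proposition (and thereby conclude $\sigma\in INC(\chi)$) I must first know that $\sigma\in NC(n)$, so the crux is to prove directly that this reassembled $\sigma$ is noncrossing.

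I expect that noncrossing check to be the main---and essentially the only---obstacle, and I would dispatch it by a short case analysis. Suppose $a<b<c<d$ with $a\sim_\sigma c$ and $b\sim_\sigma d$ but these two pairs lie in distinct blocks of $\sigma$. Since $\sigma\leq\pi$, the points $a,c$ lie in one block $V_s$ of $\pi$ and $b,d$ in one block $V_t$. If $s=t$, all four points lie in $V_s$ and we obtain a crossing of $\sigma_s=\sigma|_{V_s}$, contradicting $\sigma_s\in INC(\chi|_{V_s})\subseteq NC(V_s)$; if $s\neq t$, then $a,c\in V_s$ and $b,d\in V_t$ with $a<b<c<d$ is a crossing of the two distinct blocks $V_s,V_t$, contradicting $\pi\in NC(n)$. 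Either way no crossing exists, so $\sigma\in NC(n)$, and the preceding proposition then gives $\sigma\in INC(\chi)$, i.e. $\sigma\in[0_n,\pi]$ with $\Phi(\sigma)=(\sigma_1,\ldots,\sigma_p)$.

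Finally I would confirm that $\Phi$ is an isomorphism of partially ordered sets, not merely a bijection, by checking monotonicity in both directions: restriction preserves the reversed refinement order, so $\sigma\leq\sigma'$ implies $\sigma|_{V_s}\leq\sigma'|_{V_s}$ for all $s$; conversely, if $\sigma_s\leq\sigma_s'$ for every $s$, then each block of the reassembled $\sigma$ sits inside a block of $\sigma'$, giving $\sigma\leq\sigma'$. Together with the bijection established above, this yields the claimed isomorphism of posets.
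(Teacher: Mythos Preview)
Your proof is correct. The paper does not actually prove this proposition: it states at the start of Section~3 that all results without proof there are taken from \cite{Liu3}, and this proposition is one of them. Your argument via the restriction map, using the preceding proposition (the restriction criterion) to pass between $INC(\chi)$ and the factors $INC(\chi|_{V_s})$, together with the short crossing case-split to verify $\sigma\in NC(n)$ before invoking that criterion, is exactly the intended route and is the standard way this is shown.
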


\subsection{M\"obius functions on interval-noncrossing partitions} 
One can define the convolution for functions 
on the lattice following the standard procedure for 
partially ordered sets (see \cite{Rota}). 
Once the map $\chi$ is fixed, the lattice structure
of $INC(\chi)$ caputred from the lattice of the product 
of noncrossing partitions according to the natural
isomorphism described in Proposition \ref{lattice isomorphism}. 

Let $\chi:\{1,\cdots, n\}\rightarrow\{\f, \bb\}$. 
Given two complex-valued functions defined
on the set $\{(\sigma,\pi)|\sigma, \pi\in INC(\chi), \sigma\leq \pi\}$.
The  convolution of $f$ and $g$ is given by
$$f*g(\sigma,\pi)=\sum\limits_{ \substack{\rho\in INC(\chi)\\ \sigma\leq\rho\leq\pi}} f(\sigma,\rho)g(\rho,\pi).$$
 The delta function defined as follows:
\[
\delta_{INC}(\sigma,\pi)=\left\{\begin{array}{ll}
1,\,\,\,\, &\text{if}\,\, \sigma=\pi,\\
0,&\text{otherwise.}
\end{array}\right.
\]
We then define the zeta function by 
\[
\zeta_{INC}(\sigma,\pi)=\left\{\begin{array}{ll}
1,\,\,\,\, &\text{if}\,\, \sigma\leq\pi,\\
0,&\text{otherwise.}
\end{array}\right.
\]
and the M\"obius function $\mu_{INC}$ is the 
inverse of the zeta function in the following sense: 
\[
\mu_{INC}*\zeta_{INC}=\zeta_{INC}*\mu_{INC}=\delta_{INC}.
\]

We will use the following product formula in  \cite[Section 6]{Liu}.
\begin{proposition}\label{M product formula}\normalfont
	Let $\chi:\{1,\cdots, n\}\rightarrow \{\f, \bb \}$,
	$\pi=\{V_1,\cdots,V_p\}\in INC(\chi)$ 
	and $\sigma\in INC(\chi)$ such that $\sigma\leq \pi$.
	Suppose that  $l_0=1, l_m=n$ and $\chi^{-1}(\bb)\cap[2,n-1]=\{l_1<\cdots<l_{m-1}\}$,
	then 
	\[
	\begin{array}{rcl}
	\mu_{INC(\chi)}(\sigma,\pi)&=&\prod\limits_{i=1}^m \mu_{INC(\chi|_{V_i})}(\sigma_i,\pi_i)\\
	&=&\prod\limits_{s=1}^p\mu_{INC}(\sigma|_{V_s},1_{V_s}) \\
	&=&\prod\limits_{i=1}^m\prod\limits_{s=1}^{p} \mu_{NC}(\sigma_i|\talpha_i(V_s),1_{\talpha_i(V_s)}),
	\end{array}\\
	\]
where $\sigma_i=\alpha_i(\sigma)$, $\pi_i=\alpha_i(\pi)$, $1\leq i\leq m$ and 
$\talpha_i(V_s)$ is the restriction of $V_s$ to the set $[l_{i-1}, l_i]$.
\end{proposition}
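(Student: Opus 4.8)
The plan is to obtain all three products from the two structural isomorphisms already established, Propositions~\ref{lattice isomorphism} and \ref{canonical isomorphism}, combined with the one elementary fact about Möbius functions that I will use throughout: if a finite poset factors as a direct product $P\cong P_1\times\cdots\times P_k$ and $(a_1,\dots,a_k)\leq(b_1,\dots,b_k)$, then $\mu_P\big((a_1,\dots,a_k),(b_1,\dots,b_k)\big)=\prod_{j=1}^k\mu_{P_j}(a_j,b_j)$ (see \cite{Rota}). The three expressions in the statement are exactly the result of slicing the interval $[\sigma,\pi]\subseteq INC(\chi)$ along three compatible product decompositions: along the $\bb$-intervals $[l_{i-1},l_i]$, along the blocks $V_s$ of $\pi$, and along both at once.

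For the first equality I would invoke Proposition~\ref{lattice isomorphism}, which gives a poset isomorphism $INC(\chi)\cong NC([1,l_1])\times\cdots\times NC([l_{m-1},n])$ sending $\pi\mapsto(\alpha_1(\pi),\dots,\alpha_m(\pi))$ and $\sigma\mapsto(\sigma_1,\dots,\sigma_m)$. The product theorem for Möbius functions then yields $\mu_{INC(\chi)}(\sigma,\pi)=\prod_{i=1}^m\mu_{NC([l_{i-1},l_i])}(\sigma_i,\pi_i)$, and since $\chi$ takes no value $\bb$ strictly inside $[l_{i-1},l_i]$, the remark following the definition of $INC(\chi)$ identifies each factor $NC([l_{i-1},l_i])$ with $INC(\chi|_{[l_{i-1},l_i]})$, giving the first displayed product. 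For the second equality I would apply Proposition~\ref{canonical isomorphism} to the interval $[0_n,\pi]\cong INC(\chi|_{V_1})\times\cdots\times INC(\chi|_{V_p})$, under which $\sigma\mapsto(\sigma|_{V_1},\dots,\sigma|_{V_p})$ and $\pi\mapsto(1_{V_1},\dots,1_{V_p})$; multiplicativity again produces $\mu_{INC(\chi)}(\sigma,\pi)=\prod_{s=1}^p\mu_{INC(\chi|_{V_s})}(\sigma|_{V_s},1_{V_s})$.

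The third equality I would derive by refining the first. Inside each factor $NC([l_{i-1},l_i])$ apply the standard block-factorization of the noncrossing Möbius function, $\mu_{NC}(\sigma_i,\pi_i)=\prod_{W\in\pi_i}\mu_{NC}(\sigma_i|_W,1_W)$; since $\pi_i=\alpha_i(\pi)=\pi|_{[l_{i-1},l_i]}$ has as its blocks precisely the nonempty sets $V_s\cap[l_{i-1},l_i]=\talpha_i(V_s)$, and $\sigma_i|_{\talpha_i(V_s)}$ is just $\sigma$ restricted to that set, this gives $\mu_{NC}(\sigma_i,\pi_i)=\prod_{s}\mu_{NC}(\sigma_i|_{\talpha_i(V_s)},1_{\talpha_i(V_s)})$ with empty intersections contributing trivial factors $\mu=1$. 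Substituting into the first equality yields the double product. To see that regrouping this double product by $s$ recovers the second equality, I would read the inner product $\prod_i\mu_{NC}(\sigma_i|_{\talpha_i(V_s)},1_{\talpha_i(V_s)})$ as the first equality applied to the single-block lattice $INC(\chi|_{V_s})$, i.e.\ as $\mu_{INC(\chi|_{V_s})}(\sigma|_{V_s},1_{V_s})$.

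The main obstacle is precisely this last compatibility, and it is bookkeeping rather than conceptual. One must check that the block-refinement $\{\talpha_i(V_s)\}_i$ of a single block $V_s$ really agrees with the interval decomposition of $INC(\chi|_{V_s})$ supplied by Proposition~\ref{lattice isomorphism}. The key point is that no block of $\pi\in INC(\chi)$ can straddle a $\bb$-point without containing it: if $v_1<l_j<v_2$ with $v_1,v_2\in V_s$ and $\chi(l_j)=\bb$, then $l_j\in V_s$ by the defining condition of $INC(\chi)$. Consequently the interval boundaries at which $V_s$ is genuinely split are exactly the $l_j\in V_s$, which are the shared endpoints duplicated between consecutive $\talpha_i(V_s)$, while any $l_j\notin V_s$ leaves $V_s$ entirely on one side and contributes at most trivial singleton factors. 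Once this straddling lemma is in place, the three equalities follow at once from Propositions~\ref{lattice isomorphism} and \ref{canonical isomorphism} together with the product theorem for Möbius functions.
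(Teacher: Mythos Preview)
Your argument is correct. The paper does not supply its own proof of this proposition: it states explicitly that ``all results without proof in this section are taken from \cite{Liu3}'' and introduces this formula as a citation to \cite[Section 6]{Liu3}. Your derivation via Propositions~\ref{lattice isomorphism} and \ref{canonical isomorphism} together with the multiplicativity of the M\"obius function over direct products is exactly the standard route, and your identification of the one nontrivial bookkeeping step---that a block $V_s$ of $\pi\in INC(\chi)$ cannot straddle a $\bb$-point $l_j$ without containing it, so that the slicing of $V_s$ by the global $l_j$'s coincides with the intrinsic interval decomposition of $INC(\chi|_{V_s})$---is precisely what is needed to reconcile the second and third products. (Note, incidentally, that the subscript $\chi|_{V_i}$ in the first displayed product is evidently a typo in the paper for $\chi|_{[l_{i-1},l_i]}$, since the index $i$ runs over intervals, not blocks; you have read it correctly.)
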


\begin{corollary}\label{one canonical isomorphism}\normalfont 
	Let $\pi\in INC(\chi)$ and $V\in\pi$.  
	Denote by $[0_n,\pi]=\{\sigma\in INC(\chi): 0_n\leq \sigma\leq \pi\}$,
	and $V'=\{1,\cdots, n\}\backslash V$. Then,
	$$[0_n, \pi]\cong INC(\chi|_{V})\times INC(\chi|{V'}).$$
	In particular, we have
	\[
	   \mu_{INC}(\sigma, \pi)=\mu_{INC}(\sigma|_V, 1_V)\mu_{INC}(\sigma|_{V'}, \pi|_{V'}) 
	\]
	for $\sigma\in INC(\chi)$ and $\sigma\leq \pi$.
\end{corollary}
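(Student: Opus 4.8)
The plan is to deduce this from Proposition \ref{canonical isomorphism} by regrouping the blocks of $\pi$. Write $\pi=\{V_1,\dots,V_p\}$ and, after relabelling, assume $V=V_1$, so that $V'=V_2\cup\cdots\cup V_p$ and $\pi|_{V'}=\{V_2,\dots,V_p\}$. Since $V$ is a single block of $\pi$ we have $\pi|_V=1_V$, and hence $INC(\chi|_V)=[0_V,\pi|_V]$ is a full lattice; the precise second factor below $\pi$ is the interval $[0_{V'},\pi|_{V'}]$, which is why the M\"obius factor attached to $V'$ is $\mu_{INC}(\sigma|_{V'},\pi|_{V'})$. Accordingly I would establish the order isomorphism
\[
\Phi\colon[0_n,\pi]\longrightarrow INC(\chi|_V)\times[0_{V'},\pi|_{V'}],\qquad \Phi(\sigma)=(\sigma|_V,\sigma|_{V'}).
\]

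First I would check that $\Phi$ is well defined. Because $\sigma\le\pi$, every block of $\sigma$ is contained in a single block of $\pi$, hence lies entirely inside $V$ or entirely inside $V'$; in particular $\sigma|_{V'}\le\pi|_{V'}$. Restricting an interval-noncrossing partition to a union of some of its blocks again yields an interval-noncrossing partition (the betweenness test for the interval condition is inherited from the ambient order), so $\pi|_{V'}\in INC(\chi|_{V'})$ and the target interval makes sense. Applying the proposition that characterises, for $\sigma\le\pi$, when $\sigma\in INC(\chi)$---first over $[n]$ and then over $V'$---shows simultaneously that $\sigma|_V\in INC(\chi|_V)$ and $\sigma|_{V'}\in INC(\chi|_{V'})$, so $\Phi(\sigma)$ lands in the asserted target.

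Next I would produce the inverse $\Psi(\tau,\rho)=\tau\cup\rho$, the partition whose blocks are those of $\tau$ together with those of $\rho$. Since $V$ and $V'$ partition $[n]$ this reconstructs $\sigma$ from $\Phi(\sigma)$, so $\Phi$ and $\Psi$ are mutually inverse bijections, and both are monotone for the reverse refinement order because refinement is tested blockwise and the blocks split cleanly along $V$ and $V'$. The content to verify is that $\Psi(\tau,\rho)$ indeed lies in $[0_n,\pi]$: it satisfies $\tau\cup\rho\le\pi$ because each of its blocks sits inside some $V_s$, and it is interval-noncrossing by the restriction characterisation, since $(\tau\cup\rho)|_{V_1}=\tau$ and $(\tau\cup\rho)|_{V_s}=\rho|_{V_s}$ are all interval-noncrossing. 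This gives the claimed lattice isomorphism.

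Finally, the M\"obius identity follows by transporting $\mu_{INC(\chi)}(\sigma,\pi)$ across $\Phi$ and using that the M\"obius function of a product of posets is the product of the factor M\"obius functions: with $\pi|_V=1_V$ this yields
\[
\mu_{INC}(\sigma,\pi)=\mu_{INC}(\sigma|_V,1_V)\,\mu_{INC}(\sigma|_{V'},\pi|_{V'}),
\]
the value on the right being independent of whether the second factor is read inside $[0_{V'},\pi|_{V'}]$ or inside $INC(\chi|_{V'})$. Equivalently, this is immediate from Proposition \ref{M product formula}: splitting the product $\prod_{s=1}^{p}\mu_{INC}(\sigma|_{V_s},1_{V_s})$ into the $s=1$ term $\mu_{INC}(\sigma|_V,1_V)$ and the remaining product, the latter equals $\mu_{INC}(\sigma|_{V'},\pi|_{V'})$ by a second application of that proposition to $\pi|_{V'}$. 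The step requiring the most care is the well-definedness and invertibility of $\Phi$: one must confirm that no crossing or non-interval violation is created across the $V/V'$ boundary when reassembling $\tau\cup\rho$. This is exactly where noncrossingness of $\pi$ is used---any such violation between a $\tau$-block and a $\rho$-block would lift to a crossing between the $\pi$-blocks $V_1$ and some $V_s$, which is impossible.
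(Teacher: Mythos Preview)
Your argument is correct and is essentially the approach the paper has in mind: the corollary is stated without proof (all unproved results in that section are cited from \cite{Liu3}), and it is meant to follow immediately from Proposition~\ref{canonical isomorphism} together with Proposition~\ref{M product formula}, exactly as in your closing paragraph where you split the product $\prod_{s=1}^{p}\mu_{INC}(\sigma|_{V_s},1_{V_s})$ into the $s=1$ term and the rest. Your explicit verification of the order isomorphism via $\Phi$ and $\Psi$ is more detailed than what the paper records, but it is the same content; you are also right to note that the precise second factor is the interval $[0_{V'},\pi|_{V'}]$ rather than all of $INC(\chi|_{V'})$, which is what makes the M\"obius identity come out with $\mu_{INC}(\sigma|_{V'},\pi|_{V'})$.
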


\section{ Vanishing cumulants condition for free-Boolean independence}
In this section, we introduce the notion of operator-valued free-Boolean cumulants for pairs of random variables and  
give an alternative characterization of free-Boolean independence by using 
the free-Boolean cumulants. 

\subsection{Free-Boolean cumulants}
Let $(\A,\E)$ be a $B$-valued  probability space . Let $\Phi^{(n)}$ be the $n$-B-linear map from $\underbrace{\A\otimes_\B\cdots\otimes _\B\A}_{\text{n times}}$ to $\B$  defined as
$$\Phi^{(n)}(a_1,\cdots,a_n)=\E(a_1\cdots a_n).$$
Then, for each noncrossing partition $\pi\in NC(n)$,  we 
can write $\pi=\pi_1\cup \{V\}$, where 
$V=(l+1,l+2,\cdots,l+s)$ is an interval block of $\pi$
and $\pi_1=\pi\setminus \{V\}$. 
We define an $n$-B-linear map $\Phi_\pi: \underbrace{\A\otimes_\B\cdots\otimes_\B \A}_{\text{n times}}\rightarrow \B$ recursively as follows:
\begin{equation}\label{def:n-B-map}
\Phi_\pi(a_1,\cdots,a_n)=\Phi_{\pi_1}(a_1,\cdots,a_l,\Phi^{(s)}(a_{l+1},\cdots,a_{l+s})a_{l+s+1},\cdots,a_n).
\end{equation}

For example,  let $\pi=\{\{1,5,8\},\{2,3,4\},\{6,7\}\}$ be a noncrossing
partition of $\{1,\cdots, 8\}$. Then,
$$\Phi_{\pi}(a_1,\cdots,a_8)=\Phi_{\{\{1,5,8\},\{6,7\}\}}(a_1,\E(a_2a_3a_4)a_5,a_6,a_7,a_8)=\E(a_1\E(a_2a_3a_4)a_5\E(a_6a_7)a_8).$$
\begin{definition}  \normalfont
	Given any $\chi:\{1,\cdots, n\}\rightarrow\{\f,\bb\}$, $\pi\in INC(\chi)$
and a tuple of elements $(a_1,\cdots, a_n)$ in $(\A, \E)$, the free-Boolean cumulant $\kappa_{\chi,\pi}$ is an $n$-$B$-linear map defined as follows:
	$$\kappa_{\chi,\pi}(a_1,\cdots,a_n)
	=\sum\limits_{\substack{\sigma\leq \pi\\ \sigma\in INC(\chi)}}\mu_{INC}(\sigma,\pi)\Phi_{\sigma}(a_1,\cdots,a_n) .$$ 
\end{definition}
We start to show that the operator-valued free-Boolean cumulants have the following  multiplicative property.
\begin{theorem}\label{multiplicative property}\normalfont 
	Let $\pi\in INC(\chi)$ and $a_1,\cdots,a_n$ be noncommutative random variables in  a $B$-valued probability space $(\A, \E)$. Suppose that $V=(l+1,l+2,\cdots,l+s)$ is an interval block of $\pi$, then
	$$
	\begin{array}{rcl}
	\kappa_{\chi,\pi}(a_1,\cdots,a_n)&=&\kappa_{\chi|_{V'},\pi|_{V'}}\big(a_1,\cdots,a_l,\kappa_{\chi|_{V},1_{V}}(a_{l+1},\cdots,a_{l+s})a_{l+s+1},\cdots,a_n\big)\\
	  &=&\kappa_{\chi|_{V'},\pi|_{V'}}\big(a_1,\cdots, (a_l\kappa_{\chi|_{V},1_{V}}(a_{l+1},\cdots,a_{l+s})), a_{l+s+1},\cdots,a_n\big),
		\end{array}
	$$ 
	where $V'=\{1,\cdots, n\}\setminus V.$
\end{theorem}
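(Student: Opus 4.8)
The plan is to expand $\kappa_{\chi,\pi}$ according to its definition and then split the resulting Möbius sum into a part supported on $V$ and a part supported on $V'$, using the lattice decomposition of Corollary \ref{one canonical isomorphism} together with a multiplicativity property of the moment functionals $\Phi_\sigma$. The two identities will then differ only by a harmless shift of a $\B$-element across one tensor leg.

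First I would record the key structural observation. Since $V$ is a block of $\pi$ and every $\sigma\le\pi$ in $INC(\chi)$ has each of its blocks contained in a block of $\pi$, no block of $\sigma$ can meet both $V$ and $V'$. Hence each such $\sigma$ decomposes as the disjoint union of $\sigma|_V$, a partition of the interval $V=[l+1,l+s]$, and $\sigma|_{V'}$, a partition of $V'$; moreover $\sigma|_V\in INC(\chi|_V)$ and $\sigma|_{V'}\in INC(\chi|_{V'})$ with $\sigma|_{V'}\le\pi|_{V'}$ (note that $\pi|_V=1_V$ since $V$ is a single block). By Corollary \ref{one canonical isomorphism} the assignment $\sigma\mapsto(\sigma|_V,\sigma|_{V'})$ is a bijection of $[0_n,\pi]$ onto $INC(\chi|_V)\times[0_{V'},\pi|_{V'}]$, and the Möbius function factors as
\[
\mu_{INC}(\sigma,\pi)=\mu_{INC}(\sigma|_V,1_V)\,\mu_{INC}(\sigma|_{V'},\pi|_{V'}).
\]

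The second ingredient is the factorization of the moment functional itself,
\[
\Phi_\sigma(a_1,\dots,a_n)=\Phi_{\sigma|_{V'}}\big(a_1,\dots,a_l,\ \Phi_{\sigma|_V}(a_{l+1},\dots,a_{l+s})\,a_{l+s+1},\dots,a_n\big).
\]
I would prove this by induction on the number of blocks of $\sigma$ lying inside $V$, repeatedly invoking the recursive definition \eqref{def:n-B-map}. Because $V$ is an interval and $\sigma$ is noncrossing, $\sigma|_V$ always possesses an innermost block, which is automatically an interval block of $\sigma$; collapsing these blocks one at a time via \eqref{def:n-B-map} contracts the whole interval $V$ to the single element $\Phi_{\sigma|_V}(a_{l+1},\dots,a_{l+s})\in\B$ sitting between $a_l$ and $a_{l+s+1}$, after which the remaining reduction is exactly $\Phi_{\sigma|_{V'}}$. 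This is the main obstacle: it requires checking that the nested contractions inside $V$ can be performed independently of those outside $V$, and it is precisely the interval-block hypothesis on $V$, together with the noncrossing condition, that guarantees this separation and the order-independence of the contractions.

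With these two facts in hand the computation assembles quickly. Substituting the moment and Möbius factorizations into the definition of $\kappa_{\chi,\pi}$ converts the single sum over $\sigma$ into a double sum over $\tau:=\sigma|_V$ and $\rho:=\sigma|_{V'}$. Since $\Phi_\rho$ is $\B$-multilinear, the inner sum over $\tau$ may be pulled into the relevant argument, and by definition
\[
\sum_{\tau\in INC(\chi|_V)}\mu_{INC}(\tau,1_V)\,\Phi_\tau(a_{l+1},\dots,a_{l+s})=\kappa_{\chi|_V,1_V}(a_{l+1},\dots,a_{l+s}).
\]
The surviving sum $\sum_\rho\mu_{INC}(\rho,\pi|_{V'})\Phi_\rho$ is then exactly $\kappa_{\chi|_{V'},\pi|_{V'}}$, which gives the first identity. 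The second identity follows because $\kappa_{\chi|_{V'},\pi|_{V'}}$ is defined on the $\B$-balanced tensor product and the arguments $a_l$ and $a_{l+s+1}$ occupy adjacent slots once $V$ is deleted, so for $b:=\kappa_{\chi|_V,1_V}(a_{l+1},\dots,a_{l+s})\in\B$ one has $a_l\otimes_\B(b\,a_{l+s+1})=(a_l\,b)\otimes_\B a_{l+s+1}$, and moving $b$ across the tensor sign leaves the value unchanged.
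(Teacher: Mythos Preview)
Your proposal is correct and follows essentially the same approach as the paper: both arguments use Corollary~\ref{one canonical isomorphism} to factor the M\"obius function and the recursive definition \eqref{def:n-B-map} to factor $\Phi_\sigma$ over the interval block $V$, then split the sum over $\sigma$ into a double sum and identify the two nested cumulants, with the second identity following from the $\B$-bimodule property. If anything, you are slightly more explicit than the paper in justifying the $\Phi_\sigma$ factorization step via induction on the blocks inside $V$.
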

\begin{proof}
	For any $\sigma\leq \pi$, $\sigma\in INC(\chi)$, one can decompose it
	into a union of  two interval-noncrossing partitions $\sigma=\sigma_1\cup \sigma_2$, 
	where $\sigma_1\leq \pi|_{V'}$, $\sigma_1\in INC(\chi|_{V'})$ and $\sigma_2\leq \pi|_V=1_V$,
	 $\sigma_2\in INC(\chi|_V)$. 
    By Proposition \ref{M product formula} and Corollary \ref{one canonical isomorphism}, we  have
	$$
	\begin{array}{rcl}
	& &\kappa_{\chi,\pi}(a_1,\cdots,a_n)
	
	=\sum\limits_{\substack{\sigma\leq \pi\\ \sigma\in INC(\chi)}}\mu_{INC}(\sigma,\pi)\Phi_{\sigma}(a_1,\cdots,a_n)\\
	
	&=&\sum\limits_{\substack{\sigma\leq \pi\\ \sigma\in INC(\chi)}}\mu_{INC}(\sigma,\pi)\Phi_{\sigma|_{V'}}\big(a_1,\cdots,a_l,\Phi_{\sigma|_V}(a_{l+1},\cdots,a_{l+s})a_{l+s+1},\cdots,a_n\big) \\
	
	
	&=&\sum\limits_{\substack{\sigma\leq \pi\\ \sigma\in INC(\chi)}}\mu_{INC}(\sigma|_{V'},\pi|_{V'})\mu_{INC}(\sigma|_V,1_{V})\Phi_{\sigma|_{V'}}
	\big(a_1,\cdots,a_l,\Phi_{\sigma|_V}(a_{l+1},\cdots,a_{l+s})a_{l+s+1},\cdots,a_n\big) \\
	
	
	&=&\sum\limits_{\substack{\sigma_1\leq \pi|_{V'}\\ \sigma_1\in INC(\chi|_{V'})\\ \sigma_2\in INC(\chi|_{V})}}
	\mu_{INC}(\sigma_1,\pi|_{V'})
	\mu_{INC}(\sigma_2,1_{V})
	\Phi_{\sigma|_{V'}}\big(a_1,\cdots,a_l,\Phi_{\sigma|_V}(a_{l+1},\cdots,a_{l+s})a_{l+s+1},\cdots,a_n\big) \\
	
	&=&\sum\limits_{\substack{\sigma_1\leq \pi|_{V'}\\ \sigma_1\in INC(\chi|_{V'})}}\mu_{INC}(\sigma_1,\pi|_{V'})\Phi_{\sigma|_{V'}}\big(a_1,\cdots,a_l, \\
   & &\quad \quad \quad \quad \quad \quad \quad \quad \quad	\sum\limits_{\sigma_2\in INC(\chi|_{V})}\mu_{INC}(\sigma_2,1_{V})\Phi_{\sigma|_V}(a_{l+1},\cdots,a_{l+s})a_{l+s+1},\cdots,a_n\big) \\
	
	&=&\sum\limits_{\substack{\sigma_1\leq \pi|_{V'}\\ \sigma_1\in INC(\chi|_{V'})}}\mu_{INC}(\sigma_1,\pi|_{V'})\Phi_{\sigma|_{V'}}\big(a_1,\cdots,a_l,
	\kappa_{\chi|_{V},1_{V}}(a_{l+1},\cdots,a_{l+s})a_{l+s+1},\cdots,a_n\big) \\

	&=&\kappa_{\chi|_{V'},\pi|_{V'}}(a_1,\cdots,a_l,\kappa_{\chi|_{V},1_{V}}(a_{l+1},\cdots,a_{l+s})a_{l+s+1},\cdots,a_n).\\
	\end{array}
	$$
The other part follows from the bi-module property. 
This finishes the proof.
\end{proof}
The preceding theorem shows that $\kappa_{\chi,\pi}(a_1,\cdots,a_n)$ is completely determined by cumulant functionals of the form $\kappa_{\chi',1_{[m]}}$ for 
$m\in\mathbb{N}$ and $\chi':\{1,\cdots,m \}\rightarrow \{\f,\bb \}$.

\begin{definition}\label{FB cumlant definition}\normalfont
	Let $\{(\mathcal{A}_{i,\f}, \mathcal{A}_{i,\bb})\}_{i \in \mathcal{I}}$ be a family of pairs of $B$-faces of $\A$ in a $B$-valued probability space $(\A, \E)$. We say that the family $\{(\mathcal{A}_{i,\f}, \mathcal{A}_{i,\bb})\}_{i \in \mathcal{I}}$ is combinatorially free-Boolean independent with amalgamation over $\B$ if 
	$$\kappa_{\chi,1_n}(a_1,\cdots, a_n)=0 $$
	whenever  $\omega : \{1,\cdots.,n\}\to \I$, $\chi:\{1,\cdots, n\}\to \{\f, \bb\}$, $a_k\in\A_{\omega(k),\chi(k)}$ and 
	$\omega$ is not a constant.
\end{definition}

\begin{proposition}\normalfont
	Let $\{(\mathcal{A}_{i,\f}, \mathcal{A}_{i,\bb})\}_{i \in \mathcal{I}}$ be a family of pairs of $B$-faces in a $B$-valued probability space $(\A, \E)$. Then $\kappa_{\chi,1_n}$ has the following  additivity property:
	$$ \kappa_{\chi,1_n}(a_{1,1}+a_{2,1},\cdots, a_{1,n}+a_{2,n})=\kappa_{\chi,1_n}(a_{1,1},\cdots, a_{1,n})+\kappa_{\chi,1_n}(a_{2,1},\cdots, a_{2,n})$$
	whenever $\omega_1,\omega_2:[n]=\{1,\cdots, n\}\rightarrow \I$,
	$\chi:\{1,\cdots,n\}\to \{\f,\bb\}$, $a_{1,k}\in\A_{\omega_1(k),\chi(k)}$, $a_{2,k}\in\A_{\omega_2(k),\chi(k)}$  and $\omega_1([n])\cap\omega_2([n])=\emptyset$.
\end{proposition}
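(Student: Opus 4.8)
The plan is to derive the additivity from two ingredients already in place: the $n$-$B$-multilinearity of the cumulant functional $\kappa_{\chi,1_n}$, and the vanishing of mixed free-Boolean cumulants. First I would record that $\kappa_{\chi,1_n}$ is additive in each of its $n$ arguments. This is immediate from its definition: it is the finite linear combination $\sum_{\sigma\le 1_n,\,\sigma\in INC(\chi)}\mu_{INC}(\sigma,1_n)\,\Phi_\sigma$ of the $n$-$B$-linear maps $\Phi_\sigma$, and each $\Phi_\sigma$ is additive in every slot by construction; hence so is $\kappa_{\chi,1_n}$.

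Next I would expand the left-hand side using this slotwise additivity. Distributing each sum $a_{1,k}+a_{2,k}$ produces $2^n$ terms indexed by the subsets $S\subseteq[n]$:
\[
\kappa_{\chi,1_n}(a_{1,1}+a_{2,1},\cdots,a_{1,n}+a_{2,n})
=\sum_{S\subseteq[n]}\kappa_{\chi,1_n}(b^S_1,\cdots,b^S_n),
\]
where $b^S_k=a_{1,k}$ for $k\in S$ and $b^S_k=a_{2,k}$ for $k\notin S$. To each $S$ I attach the index map $\omega_S:[n]\to\I$ given by $\omega_S(k)=\omega_1(k)$ on $S$ and $\omega_S(k)=\omega_2(k)$ on its complement, so that $b^S_k\in\A_{\omega_S(k),\chi(k)}$ for all $k$.

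The two extreme subsets single out the target terms: $S=[n]$ yields $\kappa_{\chi,1_n}(a_{1,1},\cdots,a_{1,n})$ and $S=\emptyset$ yields $\kappa_{\chi,1_n}(a_{2,1},\cdots,a_{2,n})$. For every remaining $S$ (proper and nonempty) I would argue that the term vanishes. Indeed, choosing $k\in S$ and $k'\in[n]\setminus S$, the map $\omega_S$ takes the value $\omega_1(k)\in\omega_1([n])$ and the value $\omega_2(k')\in\omega_2([n])$; since $\omega_1([n])\cap\omega_2([n])=\emptyset$, these are distinct, so $\omega_S$ is not constant. The vanishing of mixed free-Boolean cumulants (Definition \ref{FB cumlant definition}) then gives $\kappa_{\chi,1_n}(b^S_1,\cdots,b^S_n)=0$. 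Summing over all $S$, only the two extreme terms survive, which is exactly the asserted identity.

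The argument is essentially bookkeeping once the multilinearity is in hand, so I do not anticipate a genuine obstacle. The one point deserving attention is the role of the hypothesis $\omega_1([n])\cap\omega_2([n])=\emptyset$: it is precisely what guarantees that every mixed index map $\omega_S$ is non-constant, allowing the vanishing condition to be invoked. A single shared index would break this step, and the additivity could then fail.
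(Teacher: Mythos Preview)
Your proposal is correct and follows essentially the same approach as the paper: expand by the $n$-$B$-multilinearity of $\kappa_{\chi,1_n}$ into $2^n$ terms, then invoke the vanishing of mixed cumulants (Definition \ref{FB cumlant definition}) to kill every term except the two pure ones. The paper's proof is terser, indexing the expansion by tuples $(i_1,\dots,i_n)\in\{1,2\}^n$ rather than subsets $S\subseteq[n]$, but the argument is the same; your version is simply more explicit about why the disjointness hypothesis $\omega_1([n])\cap\omega_2([n])=\emptyset$ forces the mixed index map to be non-constant.
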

\begin{proof}
	By a direct calculation, we have
	$$\kappa_{\chi,1_n}(a_{1,1}+a_{2,1},\cdots, a_{1,n}+a_{2,n})=\sum\limits_{i_1,...i_n\in\{1,2\}}\kappa_{\chi,1_n}(a_{i_1,1},\cdots, a_{i_n,n}).$$
	Since $\{(\mathcal{A}_{i,\f}, \mathcal{A}_{i,\bb})\}_{i \in I}$ are combinatorially free-Boolean independent, by Definition \ref{FB cumlant definition}, we have 
	$$\kappa_{\chi,1_n}(a_{i_1,1},\cdots, a_{i_n,n})=0$$
	if $i_j\neq i_k$ for some $j,k\in\{1,\cdots, n\}$. The result follows.
\end{proof}

\begin{proposition}\label{vanishing cumu}\normalfont
	Let $\{(\mathcal{A}_{i,\f}, \mathcal{A}_{i,\bb})\}_{i \in \mathcal{I}}$ be a combinatorially free-Boolean independent family of pairs of $B$-faces in a $B$-valued probability space $(\A, \E)$. Assume that $\pi=\{V_1,\cdots,V_p\}\in INC(\chi)$.  Then
	$$\kappa_{\chi,\pi}(a_1,\cdots, a_n)=0 $$
	whenever  $\omega : \{1,\cdots.,n\}\to \I$, 
	$\chi:\{1,\cdots, n\}\to \{\f,\bb \}$, $a_k\in\A_{\omega(k),\chi(k)}$ and 
	$\omega$ is not a constant on  a block $W$ of $\pi$.
\end{proposition}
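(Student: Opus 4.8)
The plan is to prove the statement by induction on the number $p$ of blocks of $\pi$, using the multiplicative property of Theorem \ref{multiplicative property} to peel off interval blocks one at a time and thereby reduce the computation to single-block cumulants of the form $\kappa_{\chi',1_{[m]}}$, which vanish on non-constant tuples by Definition \ref{FB cumlant definition}. For the base case $p=1$ we have $\pi=1_n$, the only block is $W=\{1,\dots,n\}$, and the hypothesis that $\omega$ is not constant on $W$ says exactly that $\omega$ is not a constant map; hence $\kappa_{\chi,1_n}(a_1,\dots,a_n)=0$ follows immediately from Definition \ref{FB cumlant definition}.

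For the inductive step I use the fact that every noncrossing partition possesses at least one interval block (e.g. an innermost one), together with two observations about the interval block $W$ singled out by the hypothesis. If $W$ is itself an interval block, I apply Theorem \ref{multiplicative property} with $V=W$: the inner factor $\kappa_{\chi|_W,1_W}(\dots)$ vanishes because $\omega|_W$ is not constant, and by the $B$-multilinearity of the outer cumulant the whole expression is zero. If $W$ is \emph{not} an interval block, then any interval block $V$ of $\pi$ is automatically different from $W$, and I peel off such a $V$ via Theorem \ref{multiplicative property}, passing to the smaller partition $\pi|_{V'}\in INC(\chi|_{V'})$, which has $p-1$ blocks and still contains $W$ as a block.

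The crucial observation that makes the induction close is that peeling off $V\neq W$ preserves all the relevant data. Indeed, the only argument that changes is the one adjacent to $V$, namely $a_{l+s+1}$ replaced by $\kappa_{\chi|_V,1_V}(\dots)\,a_{l+s+1}$ (or, at the boundary, $a_l$ replaced by $a_l\,\kappa_{\chi|_V,1_V}(\dots)$). Since each face $\mathcal{A}_{i,\f}$ and $\mathcal{A}_{i,\bb}$ is a $B$-$B$-bimodule, left or right multiplication by the $B$-valued quantity $\kappa_{\chi|_V,1_V}(\dots)$ keeps this argument inside the same face $\mathcal{A}_{\omega(\cdot),\chi(\cdot)}$. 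Consequently the maps $\chi$ and $\omega$ restricted to $V'$ are unchanged, so $\omega$ remains non-constant on $W$, and the induction hypothesis yields $\kappa_{\chi|_{V'},\pi|_{V'}}(\dots)=0$, whence $\kappa_{\chi,\pi}(a_1,\dots,a_n)=0$.

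The part requiring the most care is precisely this bookkeeping: verifying that the absorption step of Theorem \ref{multiplicative property} does not alter the face membership of the surviving variables, so that the non-constancy of $\omega$ on $W$ persists after the reduction, and checking the boundary cases in which the peeled block $V$ sits at the very start or end of $\{1,\dots,n\}$, where one invokes the second form of the multiplicative identity (absorbing into $a_l$ from the right, or into $a_{l+s+1}$ from the left, as appropriate). Beyond these checks, everything is a routine consequence of the bimodule structure of the faces and of the recursive definition \eqref{def:n-B-map} of $\Phi_\pi$.
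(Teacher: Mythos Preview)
Your proof is correct and follows essentially the same approach as the paper: induction on the number of blocks of $\pi$, using Theorem~\ref{multiplicative property} to peel off an interval block and reduce to Definition~\ref{FB cumlant definition}. The paper organizes the case split slightly differently---it picks an arbitrary interval block $V$ first and then asks whether $\omega$ is constant on $V$, rather than first asking whether the distinguished block $W$ is itself interval---but the underlying argument is identical. Your explicit check that the $B$-$B$-bimodule structure keeps the modified argument $\kappa_{\chi|_V,1_V}(\dots)\,a_{l+s+1}$ (or $a_l\,\kappa_{\chi|_V,1_V}(\dots)$) in the same face $\mathcal{A}_{\omega(\cdot),\chi(\cdot)}$ is a detail the paper leaves implicit.
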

\begin{proof} We prove the statement by induction on the number of blocks of $\pi$.
	
	When $p=1$, then the statement follows from Definition \ref{FB cumlant definition}.	
	Suppose now that $p>1$, let $V=(l+1,l+2,\cdots,l+s)$ be an interval block of $\pi$.  By Proposition \ref{multiplicative property}, we have
	$$\kappa_{\chi,\pi}(a_1,\cdots,a_n)=\kappa_{\chi|_{V'},\pi|_{V'}}(a_1,\cdots,a_l,\kappa_{\chi|_{V},1_{V}}(a_{l+1},\cdots,a_{l+s})a_{l+s+1},\cdots,a_n),$$ 
	where $V'=\{1,\cdots, n\}\setminus V.$ If $\omega$ is not a constant on $V$, then $\kappa_{\chi,\pi}(a_1,\cdots, a_n)=0$.  Otherwise, $\omega|_{V'}$ in not a constant on a block of $\pi|_{V'}.$ The statement follows from an induction argument.
\end{proof}

\subsection{Free-Boolean independence is equivalent to combinatorially free-Boolean independence }
In this subsection,  we will prove that free-Boolean independence 
defined in Definition \ref{free-BooleanDef}
is equivalent to the combinatorially free-Boolean independence
given in Definition \ref{FB cumlant definition}.
We will show that mixed  moments  are uniquely determined by lower order mixed moments in the same way for both free-Boolean independence and  combinatorially free-Boolean independence.  

The proof for following result is essentially the same
as the proof of  in \cite[Proposition 10.6]{NS} in free probability context and we thus leave the details
to the reader. 
Applying Theorem \ref{multiplicative property}, we have  the following result.

\begin{lemma}\label{Moments-cumulant}   \normalfont
	Let $\chi:\{1,\cdots,n\}\to \{\f, \bb \}$ and $a_1,\cdots,a_n$ be noncommutative random variables in a $B$-valued probability space $(\A, \E)$.  Then
	$$\E(a_1\cdots a_n)=\sum\limits_{\pi\in INC(\chi)} \kappa_{\chi,\pi} (a_1\cdots a_n).$$
\end{lemma}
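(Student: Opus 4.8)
The plan is to establish the moment-cumulant formula
$$\E(a_1\cdots a_n)=\sum_{\pi\in INC(\chi)}\kappa_{\chi,\pi}(a_1,\cdots,a_n)$$
by recognizing that it is simply the Möbius inversion of the defining relation for the cumulants, carried out over the lattice $INC(\chi)$. First I would recall that by definition we have $\kappa_{\chi,\pi}=\sum_{\sigma\le\pi,\,\sigma\in INC(\chi)}\mu_{INC}(\sigma,\pi)\Phi_\sigma$, which expresses $\kappa_{\chi,\pi}$ as the $\mu_{INC}$-convolution of the family $\{\Phi_\sigma\}$. The content of the lemma is the inverse relation $\Phi_\pi=\sum_{\sigma\le\pi}\kappa_{\chi,\sigma}$, specialized to $\pi=1_n$; since $\Phi_{1_n}(a_1,\cdots,a_n)=\E(a_1\cdots a_n)$, the right-hand side sums $\kappa_{\chi,\sigma}$ over all $\sigma\in INC(\chi)$, because $\sigma\le 1_n$ holds automatically for every $\sigma$.

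The key step is to verify that $\zeta_{INC}$ and $\mu_{INC}$ really are mutually inverse in the incidence algebra over $INC(\chi)$, which is exactly the statement $\mu_{INC}*\zeta_{INC}=\zeta_{INC}*\mu_{INC}=\delta_{INC}$ recorded before Proposition \ref{M product formula}. Granting this, I would compute, for fixed $\chi$,
$$\sum_{\pi\in INC(\chi)}\kappa_{\chi,\pi}(a_1,\cdots,a_n)=\sum_{\pi\in INC(\chi)}\sum_{\substack{\sigma\le\pi\\\sigma\in INC(\chi)}}\mu_{INC}(\sigma,\pi)\Phi_\sigma(a_1,\cdots,a_n).$$
Interchanging the order of summation and collecting, for each fixed $\sigma$, the coefficient of $\Phi_\sigma$ equals $\sum_{\pi\ge\sigma}\mu_{INC}(\sigma,\pi)=(\mu_{INC}*\zeta_{INC})(\sigma,1_n)=\delta_{INC}(\sigma,1_n)$, which is $1$ when $\sigma=1_n$ and $0$ otherwise. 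Hence only the term $\sigma=1_n$ survives, leaving $\Phi_{1_n}(a_1,\cdots,a_n)=\E(a_1\cdots a_n)$, as desired. This is precisely the abstract Möbius inversion on a finite poset as in \cite{Rota}, so the argument is formal once the incidence-algebra structure on $INC(\chi)$ is in place.

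The only genuine subtlety, and the reason the authors invoke Theorem \ref{multiplicative property} and point to \cite[Proposition 10.6]{NS}, is that the quantities involved are $B$-linear maps rather than scalars, so one must be careful that the nested insertions defining $\Phi_\sigma$ via \eqref{def:n-B-map} interact correctly with the Möbius convolution; the multiplicative property ensures that $\Phi_\sigma$ factorizes across the blocks of $\sigma$ in a manner compatible with the product formula of Proposition \ref{M product formula}, so the scalar Möbius-inversion bookkeeping transfers verbatim to the operator-valued setting. The part I expect to be the main obstacle, if one wrote out full details, is making this noncommutative insertion rigorous: one must confirm that the recursive evaluation of $\Phi_\sigma$ does not depend on the order in which interval blocks are collapsed and that each $\B$-bimodule insertion commutes past the scalar Möbius coefficients, which is exactly the technical lemma whose proof the authors defer to the reader by analogy with the free case.
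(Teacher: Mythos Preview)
Your proposal is correct and is essentially the M\"obius-inversion argument the paper has in mind when it defers to \cite[Proposition 10.6]{NS}: substitute the definition of $\kappa_{\chi,\pi}$, swap the two finite sums, and use $\mu_{INC}*\zeta_{INC}=\delta_{INC}$ to collapse everything to $\Phi_{1_n}$.

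One remark: your final paragraph overstates the difficulty. For this particular lemma the M\"obius inversion is purely formal and does not require Theorem~\ref{multiplicative property} at all. Each $\Phi_\sigma(a_1,\ldots,a_n)$ is a fixed element of $B$, the coefficients $\mu_{INC}(\sigma,\pi)$ are scalars, and the double sum is finite, so the interchange and cancellation go through with no noncommutativity issues. The multiplicativity of $\kappa_{\chi,\pi}$ is needed elsewhere in the paper (e.g., to show vanishing of mixed cumulants blockwise), and the paper's reference to it here is more of a general orientation than a logical prerequisite for this step.
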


For combinatorially free-Boolean independent random variables, we have the following result.
\begin{lemma}\label{c-fb}
	Let  $\{(\mathcal{A}_{i,\f}, \mathcal{A}_{i,\bb})\}_{i \in \mathcal{I}}$ be a family of  combinatorially free-Boolean independent  pairs of $B$-faces in  a $B$-valued probability space $(\A,\E)$.  Assume that $a_k\in\A_{\omega(k),\chi(k)}$, where $\omega:\{1,\cdots, n\}\rightarrow \I$, $\chi:\{1,\cdots, n\}\rightarrow \{\f,\bb\}$.  Let $\epsilon=\ker (\omega)$. Then, 
	\begin{equation}\label{recursive relation}
	\E(a_1\cdots a_n)=\sum\limits_{\sigma\in INC(\chi)} \left(\sum\limits_{\substack{ \pi\in INC(\chi)\\ \sigma\leq \pi\leq \epsilon}}\mu_{INC}(\sigma,\pi)\right)\Phi_{\sigma} (a_1\cdots a_n). \tag{$\bigstar$}
	\end{equation}
\end{lemma}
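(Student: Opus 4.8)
The plan is to obtain \eqref{recursive relation} directly from the general moment-cumulant expansion of Lemma \ref{Moments-cumulant} by using combinatorial free-Boolean independence to kill all but a subfamily of the cumulant terms, and then performing a M\"obius (Fubini) rearrangement. Concretely, Lemma \ref{Moments-cumulant} gives
\[
\E(a_1\cdots a_n)=\sum_{\pi\in INC(\chi)}\kappa_{\chi,\pi}(a_1,\cdots,a_n),
\]
with no independence assumption. The first substantive step is to invoke Proposition \ref{vanishing cumu}: since $a_k\in\A_{\omega(k),\chi(k)}$, the cumulant $\kappa_{\chi,\pi}(a_1,\cdots,a_n)$ vanishes whenever $\omega$ fails to be constant on some block of $\pi$. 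The key observation is that $\omega$ is constant on every block of $\pi$ precisely when each block of $\pi$ is contained in a block of $\ker(\omega)=\epsilon$, i.e. precisely when $\pi\le\epsilon$. Hence only the terms with $\pi\le\epsilon$ survive, giving
\[
\E(a_1\cdots a_n)=\sum_{\substack{\pi\in INC(\chi)\\ \pi\le\epsilon}}\kappa_{\chi,\pi}(a_1,\cdots,a_n).
\]

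Next I would unfold the definition of $\kappa_{\chi,\pi}$, replacing each cumulant by its M\"obius sum $\sum_{\sigma\le\pi,\,\sigma\in INC(\chi)}\mu_{INC}(\sigma,\pi)\Phi_\sigma(a_1,\cdots,a_n)$, to obtain a double sum over pairs $(\pi,\sigma)$ in $INC(\chi)\times INC(\chi)$ constrained by $\sigma\le\pi\le\epsilon$. Interchanging the two summations---reading $\sigma$ as the outer variable and $\pi$ as the inner one---and factoring out $\Phi_\sigma$, which does not depend on $\pi$, yields
\[
\E(a_1\cdots a_n)=\sum_{\sigma\in INC(\chi)}\Bigg(\sum_{\substack{\pi\in INC(\chi)\\ \sigma\le\pi\le\epsilon}}\mu_{INC}(\sigma,\pi)\Bigg)\Phi_\sigma(a_1,\cdots,a_n),
\]
which is exactly the asserted identity. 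One should note that for $\sigma\in INC(\chi)$ with $\sigma\not\le\epsilon$ the inner sum is empty and contributes nothing, so there is no harm in letting the outer index run over all of $INC(\chi)$.

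The argument has no genuine obstacle; it is a routine manipulation of an already M\"obius-inverted moment-cumulant formula, using only results established earlier in this section. The only places demanding a little care are the two translations between the language of Proposition \ref{vanishing cumu} and the lattice language of $INC(\chi)$: first, recognizing that ``$\omega$ constant on each block of $\pi$'' is verbatim the order relation $\pi\le\ker(\omega)$; and second, checking that the index constraint $\sigma\le\pi\le\epsilon$ of the double sum is correctly re-read when the order of summation is reversed. Both are immediate once stated, so the main work is simply organizing the three displayed identities in the correct sequence.
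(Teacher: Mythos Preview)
Your proof is correct and follows essentially the same approach as the paper: start from Lemma~\ref{Moments-cumulant}, apply Proposition~\ref{vanishing cumu} to restrict to $\pi\le\epsilon$, expand the cumulant definition, and interchange the order of summation. The paper's argument is identical in structure and detail.
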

\begin{proof}
	By Lemma \ref{Moments-cumulant}, we have 
	$$\E(a_1\cdots a_n)=\sum\limits_{\pi\in INC(\chi)} \kappa_{\chi,\pi} (a_1\cdots a_n).$$
	For each $\pi\in INC(\chi)$, write its blocks as $\pi=\{V_1,\cdots,V_p\}$.  Since $\{(\mathcal{A}_{i,\f}, \mathcal{A}_{i,r})\}_{i \in \mathcal{I}}$ are combinatorially free-Boolean independent, 
	by Lemma \ref{vanishing cumu}, we have
	$$\kappa_{\chi,\pi} (a_1\cdots a_n)=0,$$
	if $\omega$ is not a  constant on some block $V_s$ of $\pi$.   
	In other words, $\kappa_{\chi,\pi}(a_1,\cdots,a_n)\neq 0$ 
	only if $\omega$ is  a constant on ${V_s}$ for all $s$, 
	which implies that $V_s$ is contained in a block of $\epsilon$ for all $s$, i.e.,  $\pi\leq \epsilon$.
	Therefore, we have
	$$
	\begin{array}{rcl}
	\E(a_1\cdots a_n)
	&=&\sum\limits_{\pi\in INC(\chi),\pi\leq \epsilon} \kappa_{\chi,\pi} (a_1,\cdots ,a_n)\\
	&=&\sum\limits_{\pi\in INC(\chi),\pi\leq \epsilon} \left(\sum\limits_{\substack{ \sigma\in INC(\chi)\\ \sigma\leq \pi}}\mu_{INC}(\sigma,\pi)\Phi_{\sigma} (a_1,\cdots, a_n)\right)\\
	&=&\sum\limits_{\sigma\in INC(\chi)} \left(\sum\limits_{\substack{ \pi\in INC(\chi)\\ \sigma\leq \pi\leq \epsilon}}\mu_{INC}(\sigma,\pi)\right)\Phi_{\sigma} (a_1,\cdots ,a_n).\\
	\end{array}
	$$
	This finishes the proof. 
\end{proof}

We now turn to consider the case that the family $\{(\mathcal{A}_{i,\f}, \mathcal{A}_{i,\bb})\}_{i \in \mathcal{I}}$ is  free-Boolean independent in $(\A,\B,\E)$ in the sense of Definition \ref{free-BooleanDef}. 
In what follows, we assume that $a_k\in\A_{\omega(k),\chi(k)}$, where $\omega:\{1,\cdots, n\}\rightarrow \I$, $\chi:\{1,\cdots, n\}\rightarrow \{\f, \bb\}$. 
Let $\epsilon=\text{ker}(\omega)$, the kernel of $\omega$. 
Recall that $\chi^{-1}(\bb)\cap [2,n-1]=\{l_1, \cdots, l_{m-1} \}$.
Let $\chi_1$ (or $\epsilon_1$) be the restriction of $\chi$ (or $\epsilon$)  to $\{1,\cdots, l_1\}$
respectively.
Let $\chi_1'$ (or $\epsilon_1'$) be the restriction of 
$\chi$ (or $\epsilon$) to the interval $\{l_1,\cdots, n\}$ respectively.  
We need to show that  the the mixed moments $\E(a_1\cdots a_n)$ can be determined in the same way as in Lemma \ref{c-fb}. 

To this end, it is enough to consider the case that $\A=\LL(\X)$,  $\mathcal{A}_{i,\f} =\lambda_i(\mathcal{L}(\X_i))$ and $\mathcal{A}_{i,\bb} =P_i\lambda_i(\mathcal{L}(\X_i))P_i$, where   $\{(\X_i, \mrx_i,p_i)\}_{i\in I}$ is a family of vector spaces with specified vectors and $(\mathcal{X}, \mrx, p)$ is the  reduced free product  of them.

We will prove the mixed moments formula $(\bigstar)$ in Lemma \ref{c-fb}   by  induction on the number of elements  of  $\chi^{-1}(\bb)\cap[2,n-1]$.

\begin{lemma}\label{replace a_n}  If $\chi(n)=\bb$, then there exists an operator $T\in \A_{\omega(n),\f}$ such that 
	$$\E(a_1\cdots a_n)= \E(a_1\cdots a_{n-1}T).$$
\end{lemma}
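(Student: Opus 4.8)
The plan is to work in the concrete model fixed just before the statement, where $\A = \LL(\X)$, $\A_{\omega(n),\f} = \lambda_{\omega(n)}(\LL(\X_{\omega(n)}))$ and $\A_{\omega(n),\bb} = P_{\omega(n)}\lambda_{\omega(n)}(\LL(\X_{\omega(n)}))P_{\omega(n)}$, and to produce $T$ simply by stripping the Boolean projections off $a_n$. Write $i = \omega(n)$. Since $\chi(n)=\bb$, the element $a_n$ lies in $\A_{i,\bb}$, so there is some $c\in\LL(\X_i)$ with $a_n=\beta_i(c)=P_i\lambda_i(c)P_i$. I would then set $T:=\lambda_i(c)$, which belongs to $\A_{i,\f}=\A_{\omega(n),\f}$, as required.

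The whole argument reduces to showing that $a_n$ and $T$ agree on the vacuum vector $1_\B\oplus 0$. First I would invoke Proposition \ref{simple Boolean}, which gives $P_i\lambda_i(c)=\lambda_i(c)P_i$; since $P_i$ is idempotent this yields $a_n=P_i\lambda_i(c)P_i=\lambda_i(c)P_i$. Next I would note that the vacuum $1_\B\oplus 0$ lies in $\B\subseteq\B\oplus\mrx_i$, which is exactly the range of $P_i$, so $P_i(1_\B\oplus 0)=1_\B\oplus 0$. Combining the two observations gives
$$a_n(1_\B\oplus 0)=\lambda_i(c)P_i(1_\B\oplus 0)=\lambda_i(c)(1_\B\oplus 0)=T(1_\B\oplus 0).$$

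Finally I would feed this into the formula $\E(X)=\E_{\LL(\X)}(X)=p(X(1_\B\oplus 0))$. Since the letters $a_1,\dots,a_{n-1}$ are common to both words and $a_n$, $T$ produce the same vector on the vacuum,
$$\E(a_1\cdots a_n)=p\big(a_1\cdots a_{n-1}\,a_n(1_\B\oplus 0)\big)=p\big(a_1\cdots a_{n-1}\,T(1_\B\oplus 0)\big)=\E(a_1\cdots a_{n-1}T),$$
which is the claim. There is no serious obstacle; the only point requiring care is the dual role of the two projections in $\beta_i(c)=P_i\lambda_i(c)P_i$. By Proposition \ref{simple Boolean} the left-hand $P_i$ commutes through $\lambda_i(c)$ and merges with the right-hand one (as $P_i^2=P_i$), and the surviving projection is harmless because it fixes the vacuum. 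This also explains why the hypothesis is specifically $\chi(n)=\bb$: were a Boolean letter to occur in the interior, the operators to its right would already have carried the vacuum to a nontrivial vector, and its trailing projection could no longer be discarded.
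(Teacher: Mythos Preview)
Your proof is correct and matches the paper's argument almost exactly: both choose $T=\lambda_{\omega(n)}(c)$ where $a_n=P_{\omega(n)}\lambda_{\omega(n)}(c)P_{\omega(n)}$, kill the right-hand $P_{\omega(n)}$ because it fixes $1_\B$, and then dispose of the left-hand $P_{\omega(n)}$. The only cosmetic difference is that you commute the left projection past $\lambda_{\omega(n)}(c)$ via Proposition~\ref{simple Boolean} and absorb it into the right one, whereas the paper simply observes that $T1_\B\in P_{\omega(n)}\X$ (i.e.\ $\B\oplus\mrx_{\omega(n)}$ is $\lambda_{\omega(n)}(c)$-invariant, which is precisely the content of that proposition); the two justifications are equivalent.
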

\begin{proof}
	If $n\in \chi^{-1}(\bb)$, then $a_n\in \A_{\omega(n),\bb}=P_{\omega(n)}\lambda_{\omega(n)}(\LL(\X_{\omega(n)}))P_{\omega(n)}$. Assume that $a_n=P_{\omega(n)}TP_{\omega(n)}$ for some $T\in \lambda_{\omega(n)}(\mathcal{L}(\X_{\omega(n)}))$. Then 
	$$a_1\cdots a_n1_{\B}=a_1\cdots a_{n-1}P_{\omega(n)}TP_{\omega(n)}1_{\B}=a_1\cdots T1_{\B}=a_1\cdots a_{n-1}T1_{\B}$$
	since $ T1_\B\in P_{\omega(n)}\X$.  Thus,  the mixed moments are the same if we replace $a_n$ by the element $T\in\lambda_{\omega(n)}(\mathcal{L}(\X_{\omega(n)}))$.
\end{proof}

\begin{lemma}\label{replace a_1} If $\chi(1)=\bb$, then there exists an operator $T\in \A_{\omega(n),\f}$ such that 
	$$\E(a_1\cdots a_n)= \E(Ta_2\cdots a_{n}).$$
\end{lemma}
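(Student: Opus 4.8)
The plan is to mirror the proof of Lemma~\ref{replace a_n}, but now acting on the left-most factor $a_1$ and using Proposition~\ref{simple Boolean} to move the boolean projection past the freely-acting operator. First I would invoke the reduction to the canonical model, so that $\A_{\omega(1),\bb}=P_{\omega(1)}\lambda_{\omega(1)}(\LL(\X_{\omega(1)}))P_{\omega(1)}$ and $\A_{\omega(1),\f}=\lambda_{\omega(1)}(\LL(\X_{\omega(1)}))$. Since $\chi(1)=\bb$ we have $a_1\in\A_{\omega(1),\bb}$, so I may write $a_1=P_{\omega(1)}TP_{\omega(1)}$ for some $T\in\lambda_{\omega(1)}(\LL(\X_{\omega(1)}))=\A_{\omega(1),\f}$. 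This $T$ will be the operator required by the statement (so the subscript in the displayed conclusion should read $\omega(1)$, matching the $\omega(n)$ in Lemma~\ref{replace a_n}).

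The key algebraic simplification is to absorb both copies of $P_{\omega(1)}$. By Proposition~\ref{simple Boolean}, $P_{\omega(1)}$ commutes with every element of $\lambda_{\omega(1)}(\LL(\X_{\omega(1)}))$, in particular with $T$; combined with the idempotence $P_{\omega(1)}^2=P_{\omega(1)}$, this gives
\[
a_1=P_{\omega(1)}TP_{\omega(1)}=P_{\omega(1)}^2 T=P_{\omega(1)}T .
\]
Thus the inner projection disappears, and $a_1$ acts as $P_{\omega(1)}$ post-composed with $T$.

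Next I would unwind the expectation. Recall $\E(x)=p\big(x(1_{\B}\oplus 0)\big)$, where $p$ extracts the $\B$-component of a vector in $\X=\B\oplus\mrx$. Because $P_{\omega(1)}$ is the natural projection onto $\B\oplus\mrx_{\omega(1)}$, it restricts to the identity on the summand $\B$ and only alters the remaining summands; hence $p\circ P_{\omega(1)}=p$ on all of $\X$. Applying this with the vector $\xi=T a_2\cdots a_n(1_{\B}\oplus 0)$ yields
\[
\E(a_1\cdots a_n)=p\big(P_{\omega(1)}T a_2\cdots a_n(1_{\B}\oplus 0)\big)
=p\big(T a_2\cdots a_n(1_{\B}\oplus 0)\big)=\E(T a_2\cdots a_n),
\]
which is the claim.

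There is no serious obstacle here: this is simply the left-handed analogue of Lemma~\ref{replace a_n}, with the role played there by ``$T1_{\B}\in P_{\omega(n)}\X$'' (absorbing a projection acting on the vacuum vector) now played by the dual identity $p\circ P_{\omega(1)}=p$ (absorbing a projection before taking the $\B$-component). The only points needing a line of justification are the commutation $P_{\omega(1)}T=TP_{\omega(1)}$, which is exactly Proposition~\ref{simple Boolean}, and the equality $p\circ P_{\omega(1)}=p$, which is immediate from the definitions of $p$ and $P_{\omega(1)}$ since $\B$ is a common direct summand. I expect the write-up to be only a few lines.
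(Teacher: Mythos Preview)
Your argument is correct and in fact a bit cleaner than the paper's. Both proofs begin the same way: write $a_1=P_{\omega(1)}TP_{\omega(1)}$ and use $p\circ P_{\omega(1)}=p$ to absorb the outer (leftmost) projection. The difference is in how the inner projection $P_{\omega(1)}$ is removed. You invoke Proposition~\ref{simple Boolean} to commute $P_{\omega(1)}$ past $T$, collapsing $P_{\omega(1)}TP_{\omega(1)}$ to $P_{\omega(1)}T$ in one step. The paper instead keeps the inner projection, writes $TP_{\omega(1)}=T-T(1_\X-P_{\omega(1)})$, and then argues directly that the complementary subspace $(1_\X-P_{\omega(1)})\X$ is $T$-invariant (so that $p$ kills the second term). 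That invariance argument is precisely the content of Proposition~\ref{simple Boolean}, so the paper is essentially re-deriving it inline; your route just cites it and is correspondingly shorter. Either way the key ingredients are the same two facts, $pP_{\omega(1)}=p$ and $P_{\omega(1)}T=TP_{\omega(1)}$, and you have identified both correctly. Your remark that the index in the conclusion should read $\omega(1)$ rather than $\omega(n)$ is also right.
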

\begin{proof}
	If $1\in \chi^{-1}(\bb)$, then $a_1\in \A_{\omega(1),\bb}=P_{\omega(1)}\lambda_{\omega(1)}(\mathcal{L}(\X_{\omega(1)}))P_{\omega(1)}$. Assume that $a_1=P_{\omega(1)}TP_{\omega(1)}$ for some $T\in \lambda_{\omega(1)}(\mathcal{L}(\X_{\omega(1)}))$.  
	Recall that $p$ is the projection $p:\X\rightarrow B$. 
	Notice that  $pP_{\omega(1)}=p$ and 
	
	$$
	\begin{array}{rcl}
	\E(a_1\cdots a_n)
	&=&pP_{\omega(1)}TP_{\omega(1)}a_2\cdots a_{n}1_{\B}\\
	&=&pP_{\omega(1)}TP_{\omega(1)}a_2\cdots a_{n}1_{\B}\\
	&=&pTP_{\omega(1)}a_2\cdots a_{n}1_{\B}\\
	
	&=&pTa_2\cdots a_{n}1_{\B}-pT(1_{\X}-P_{\omega(1)})a_2\cdots a_{n}1_{\B},\\
	\end{array}$$
	where $1_{\X}$ is the identity operator in $\LL(\X)$. Notice that  
	$$(1_{\X}-P_{\omega(1)})a_2\cdots a_{n}1_{\B}\in \bigoplus\limits_{i\neq\omega(1)}\mrx_i\oplus\bigoplus\limits_{n\geq 2}\left(\bigoplus\limits_{i_1\neq i_2\neq\cdots \neq i_n} \mathring{\mathcal{X}_{i_1}}\otimes_{\B}\cdots \otimes_{\B}\mathring{\mathcal{X}_{i_n}}\right) $$
	and 
\[
	\begin{array}{crl}
	&&\bigoplus\limits_{i\neq\omega(1)}\mrx_i\oplus\bigoplus\limits_{n\geq 2}\left(\bigoplus\limits_{i_1\neq i_2\neq\cdots \neq i_n} \mathring{\mathcal{X}_{i_1}}\otimes_{\B}\cdots \otimes_{\B}\mathring{\mathcal{X}_{i_n}}\right)\\
	 &=&V_{\omega(1)}\left(\X_{\omega(1)}\otimes_{\B}  
	\left(\bigoplus\limits_{ \substack{ i_1\neq i_2\neq\cdots \neq i_n\\ i_1 \neq \omega(1), n\geq 1} }
	\mrx_{i_1}\otimes_{\B}\cdots \otimes_{\B}\mathring{\mathcal{X}_{i_n}}\right) \right).
	\end{array}
\]
	Therefore, $	\bigoplus\limits_{i\neq\omega(1)}\mrx_i\oplus\bigoplus\limits_{n\geq 2}\left(\bigoplus\limits_{i_1\neq i_2\neq\cdots \neq i_n} \mathring{\mathcal{X}_{i_1}}\otimes_{\B}\cdots \otimes_{\B}\mathring{\mathcal{X}_{i_n}}\right)$ is an invariant subspace of $T$ and 
	$$pT(I_{\X}-P_{\omega(1)})a_2\cdots a_{n}1_{\B}=p(I_{\X}-P_{\omega(1)})T(I_{\X}-P_{\omega(1)})a_2\cdots a_{n}1_{\B})=0, $$
	where the last equality follows from the fact that $p(I_{\X}-P_{\omega(1)})=0.$ We thus proved that the mixed moments
	$	\E(a_1\cdots a_n)$ will be the same if we replace $a_1$ by the element $T\in\lambda_{\omega(1)}(\mathcal{L}(\X_{\omega(1)}))$.
	
\end{proof}

We start with the following result. 
\begin{lemma} \label{lemma:4.10}
When $|\chi^{-1}(\bb)\cap[2,n-1]|=0$, we have 
	$$\E(a_1\cdots a_n)=\sum\limits_{\sigma\in INC(\chi)} \left(\sum\limits_{\substack{ \pi\in INC(\chi)\\ \sigma\leq \pi\leq \epsilon}}\mu_{INC}(\sigma,\pi)\right)\Phi_{\sigma} (a_1\cdots a_n).\\$$
\end{lemma}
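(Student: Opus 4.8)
The plan is to exploit the Remark following the definition of $INC(\chi)$: since $\chi^{-1}(\bb)\cap[2,n-1]=\emptyset$, we have $INC(\chi)=NC(n)$, so the asserted identity is a statement purely about the noncrossing lattice. In particular $\mu_{INC}$ coincides with the classical M\"obius function $\mu_{NC}$ on $NC(n)$, and $\Phi_\sigma$ is the usual noncrossing moment functional. I would therefore first strip off the only positions where the Boolean structure can appear, namely $1$ and $n$, and then recognise what remains as a purely free computation in the reduced free product.

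First I would dispose of the boundary. If $\chi(1)=\bb$, Lemma \ref{replace a_1} lets me replace $a_1$ by an operator $T\in\A_{\omega(1),\f}=\lambda_{\omega(1)}(\LL(\X_{\omega(1)}))$ without changing $\E(a_1\cdots a_n)$; likewise, if $\chi(n)=\bb$, Lemma \ref{replace a_n} replaces $a_n$ by an element of $\A_{\omega(n),\f}$. These replacements leave $\omega$, hence $\epsilon=\ker(\omega)$, untouched, and since $INC(\chi)$ does not depend on the values of $\chi$ at $1$ and $n$, they leave both sides of the claimed identity unchanged. After performing them I may assume $\chi\equiv\f$, so that every $a_k\in\A_{\omega(k),\f}=\lambda_{\omega(k)}(\LL(\X_{\omega(k)}))$.

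At this stage all the variables are images under the $\lambda_i$, which are precisely the representations defining the reduced free product of $\B$-$\B$-bimodules with specified projections. Hence $\{\lambda_i(\LL(\X_i))\}_{i\in\I}$ is a free family over $\B$ in $(\LL(\X),\E_{\LL(\X)})$ in the sense of Voiculescu \cite{Voi3, Sp1}, so the operator-valued moment formula for free independence applies. Since $\Phi_\sigma$ and $\mu_{INC}$ agree on $NC(n)$ with their free-probability counterparts, the cumulants $\kappa_{\chi,\pi}$ (for $\chi\equiv\f$, $\pi\in NC(n)$) are exactly the free cumulants, and the mixed ones vanish whenever $\omega$ is not constant on a block. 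Thus, by Lemma \ref{Moments-cumulant}, which is pure M\"obius inversion and holds for arbitrary variables, only partitions $\pi\leq\epsilon$ survive:
\[
\E(a_1\cdots a_n)=\sum_{\pi\in NC(n)}\kappa_{\chi,\pi}(a_1,\cdots,a_n)=\sum_{\substack{\pi\in NC(n)\\ \pi\leq\epsilon}}\kappa_{\chi,\pi}(a_1,\cdots,a_n).
\]
Expanding each surviving cumulant by its definition and interchanging the two sums then yields exactly the stated formula over $NC(n)=INC(\chi)$.

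The routine part is the bookkeeping of the two boundary replacements; the genuine content is the free moment formula in the display above. I would either quote it directly from operator-valued free probability or re-derive it by the standard recursion: write each $\lambda_{\omega(k)}(\cdot)$ as a sum of a $\B$-valued part and a part pushing into $\mrx_{\omega(k)}$, apply the word $\lambda_{\omega(1)}(\cdot)\cdots\lambda_{\omega(n)}(\cdot)$ to $1_\B\oplus 0$, and track which tensor legs cancel. The main obstacle is precisely this last verification, that a product of operators coming from distinct indices contributes to $\E_{\LL(\X)}$ only through noncrossing pairings of equal indices; but in the present base case it is the well-understood free calculation, with no Boolean positions in the interior to complicate the tensor-leg reduction.
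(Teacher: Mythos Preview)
Your proposal is correct and follows essentially the same route as the paper: use Lemmas \ref{replace a_1} and \ref{replace a_n} to push the only possible Boolean positions $1$ and $n$ into the free faces, then invoke operator-valued freeness of $\{\lambda_i(\LL(\X_i))\}_{i\in\I}$ in the reduced free product to get the vanishing of mixed free cumulants, and finally use $INC(\chi)=NC(n)$ together with M\"obius inversion to recover the stated formula. One small point worth making explicit (the paper glosses over it as well): your claim that the boundary replacements ``leave both sides of the claimed identity unchanged'' relies on the fact that for any $\sigma\in NC(n)$ the element $a_1$ only enters $\Phi_\sigma$ through a factor of the form $\E(a_1\cdots)$, and similarly $a_n$ only through $\E(\cdots a_n)$, so Lemmas \ref{replace a_1} and \ref{replace a_n} apply blockwise to give $\Phi_\sigma(T_1,a_2,\dots,a_{n-1},T_2)=\Phi_\sigma(a_1,\dots,a_n)$.
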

\begin{proof}
	By Lemma \ref{replace a_n} and \ref{replace a_1}, 
	if $a_1\in \A_{\omega(1), \bb}$ or $a_n\in\A_{\omega(n), \bb}$, we may replace $a_1$ by $T_1\in \A_{\omega(1), \f}$
	and $a_n$ by $T_2\in\A_{\omega(n), \f}$, we still have
	$$ \Phi(a_1\cdots a_n)=\Phi(T_1a_2\cdots a_{n-1}T_2).$$
	Hence, when $|\chi^{-1}(\bb)\cap[2,n-1]|=0$, we may assume that $T_1, a_2,\cdots,a_{n-1}, T_2$ are from the left faces of algebras $\A_{\omega(k), \f}$. Notice that 
	the family $\{(\mathcal{A}_{i,\f})\}_{i \in \mathcal{I}}$ is  freely independent with amalgamation in $(\A,\E)$ (see \cite{NS, Sp1}), we have $$
	\begin{array}{rcl}
	\E(a_1\cdots a_n)&=&\E(T_1a_2\cdots a_{n-1}T_2)\\
	&=&\sum\limits_{\pi\in NC(n),\pi\leq \epsilon} \kappa_{\pi} (T_1,a_2,\cdots ,a_{n-1},T_2)\\
	&=&\sum\limits_{\sigma\in NC(n)} \left(\sum\limits_{\substack{ \pi\in NC(n)\\ \sigma\leq \pi\leq \epsilon}}\mu(\sigma,\pi)\right)\Phi_{\sigma} (T_1,a_2,\cdots ,a_{n-1},T_2)\\
	
	&=&\sum\limits_{\sigma\in INC(\chi)} \left(\sum\limits_{\substack{ \pi\in INC(\chi)\\ \sigma\leq \pi\leq \epsilon}}\mu_{INC}(\sigma,\pi)\right)\Phi_{\sigma} (T_1,a_2,\cdots ,a_{n-1},T_2)\\
		&=&\sum\limits_{\sigma\in INC(\chi)} \left(\sum\limits_{\substack{ \pi\in INC(\chi)\\ \sigma\leq \pi\leq \epsilon}}\mu_{INC}(\sigma,\pi)\right)\Phi_{\sigma} (a_1,a_2,\cdots ,a_{n-1},a_2),
	\end{array} 
	$$
	where we used the fact that $INC(\chi)=NC(n)$ when $|\chi^{-1}(\bb)\cap[2,n-1]|=0$.                             

\end{proof}

Now, we are now ready to prove our main theorem.
\begin{theorem}\label{Main}\normalfont
	Let $\{(\mathcal{A}_{i,\f}, \mathcal{A}_{i,\bb})\}_{i \in I}$ be a family of pairs of $B$-faces in a $B$-valued probability space $(\A, \E)$. 
	The family $\{(\mathcal{A}_{i,\f}, \mathcal{A}_{i,\bb})\}_{i \in \mathcal{I}}$ is free-Boolean independent with amalgamation over $B$ if and only if they are combinatorially free-Boolean independent with amalgamation over $B$.
\end{theorem}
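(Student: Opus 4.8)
The plan is to establish the equivalence by showing both notions of independence force the mixed moments $\E(a_1\cdots a_n)$ to be computed by exactly the same recursive formula, namely equation $(\bigstar)$ of Lemma \ref{c-fb}. One direction is already in hand: Lemma \ref{c-fb} shows that combinatorial free-Boolean independence yields $(\bigstar)$. For the converse, and to close the loop, I would reduce everything to the concrete model $\A=\LL(\X)$ with $\A_{i,\f}=\lambda_i(\LL(\X_i))$ and $\A_{i,\bb}=P_i\lambda_i(\LL(\X_i))P_i$ on the reduced free product, since Definition \ref{free-BooleanDef} stipulates that any free-Boolean independent family has the same joint distribution as such a model. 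Thus it suffices to prove that in this model the mixed moments satisfy $(\bigstar)$; once that is known, the moments of \emph{any} free-Boolean independent family agree with those of a combinatorially free-Boolean independent family, and since $(\bigstar)$ recursively determines all moments from lower ones, the two classes coincide.

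The core of the argument is therefore to verify $(\bigstar)$ for the operator model, and I would carry this out by induction on $|\chi^{-1}(\bb)\cap[2,n-1]|$, the number of interior Boolean letters. The base case $|\chi^{-1}(\bb)\cap[2,n-1]|=0$ is exactly Lemma \ref{lemma:4.10}, which handles it by using Lemmas \ref{replace a_n} and \ref{replace a_1} to absorb any boundary Boolean letters into free letters, then invoking the classical operator-valued free moment-cumulant machinery together with the fact that $INC(\chi)=NC(n)$ in this regime. For the inductive step, I would fix the smallest interior Boolean index $l_1$ and use the lattice isomorphism $\alpha_1'$ from Proposition \ref{lattice isomorphism}, which splits $INC(\chi)\cong NC([1,l_1])\times INC(\chi')$, together with the product formula for $\mu_{INC}$ in Proposition \ref{M product formula} and Corollary \ref{one canonical isomorphism}. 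The key structural fact is that a Boolean letter at position $l_1$ carries the projection $P_{\omega(l_1)}$, which forces the word to factor: the central projection decouples the block of letters to the left of $l_1$ from those to the right, mirroring exactly how the partition lattice factors. Concretely, I would write $a_{l_1}=P_{\omega(l_1)}\lambda_{\omega(l_1)}(T)P_{\omega(l_1)}$ and push the left projection through to split $\E(a_1\cdots a_n)$ as a sum over how the initial segment closes off, reducing the right segment to a word with one fewer interior Boolean letter, to which the induction hypothesis applies.

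The main obstacle I anticipate is the bookkeeping in the inductive step: one must show that the projection $P_{\omega(l_1)}$ at position $l_1$ splits the moment into a product indexed precisely by $NC([1,l_1])\times INC(\chi')$, and that this product splitting matches the factorization of the Möbius sum in $(\bigstar)$ under the isomorphism $\alpha_1'$. Establishing this requires carefully tracking how the vector $a_{l_1+1}\cdots a_n 1_\B$ decomposes relative to the summand $\B\oplus\mrx_{\omega(l_1)}$ versus its complement, in the spirit of the computation in Lemma \ref{replace a_1}: the part of the word landing outside $\B\oplus\mrx_{\omega(l_1)}$ is annihilated by the inner projection (when $\omega$ agrees) or contributes a strictly freely-independent tensor leg (when $\omega$ differs), and in either case the contribution reorganizes into the claimed product. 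The delicate point is confirming that the combinatorial weights $\sum_{\sigma\le\pi\le\epsilon}\mu_{INC}(\sigma,\pi)$ factor correctly across the cut at $l_1$; this is where Proposition \ref{M product formula} does the essential work, guaranteeing that the Möbius function is multiplicative over the interval decomposition so that the analytically-derived product splitting and the combinatorially-defined sum in $(\bigstar)$ agree term by term.
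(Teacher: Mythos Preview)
Your proposal is correct and follows essentially the same approach as the paper: reduce to the operator model, induct on $|\chi^{-1}(\bb)\cap[2,n-1]|$ with Lemma~\ref{lemma:4.10} as the base case, split at the first interior Boolean index $l_1$ via the lattice isomorphism $\alpha_1'$ of Proposition~\ref{lattice isomorphism}, and recombine using the multiplicativity of $\mu_{INC}$ from Proposition~\ref{M product formula}. The one device the paper makes explicit that you leave implicit is that the tail $A_1=a_{l_1}\cdots a_n$, because $A_1(1_\B)\in\B\oplus\mrx_{\omega(l_1)}$, may be regarded as a single element of $\lambda_{\omega(l_1)}(\LL(\X_{\omega(l_1)}))$, which is precisely what lets the base-case free formula on $[1,l_1]$ apply to the tuple $(a_1,\ldots,a_{l_1-1},A_1)$ before unfolding $A_1$ and invoking the inductive hypothesis on the right segment.
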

\begin{proof}
    It suffices to show that Equation $(\bigstar)$ holds by assuming that $\{(\mathcal{A}_{i,\f}, \mathcal{A}_{i,\bb})\}_{i \in \mathcal{I}}$ is free-Boolean independent with amalgamation over $B$. When $|\chi^{-1}(\bb)\cap[2,n-1]|=0$, it is Lemma \ref{lemma:4.10}. 
	Assume now that  Equation $(\bigstar)$ in Lemma \ref{c-fb} holds  whenever $|\chi^{-1}(\bb)\cap[2,n-1]|
	\leq m-2$.  We shall prove it holds when $|\chi^{-1}(\bb)\cap[2,n-1]|=m-1$. Set $\chi^{-1}(\bb)=\{l_1<\cdots<l_{m-1}\}$ and $l_0=1, l_m=n$.
	
	Let $A_1=\prod\limits_{i=l_1}^n a_i$. Then $A_1(1_{\B})\in \B\oplus \mrx_{\omega(l_1)}$. Since the range of $A_{l_1}$ is  $\B\oplus \mrx_{\omega(l_1)}$, we can view $A_1:\B\oplus \mrx_{\omega(l_1)}\to \B\oplus \mrx_{\omega(l_1)}$ as a linear operator. In this way, $A_1$ is considered as an element in $\lambda_{\omega(l_1)}(\LL(\X_{\omega(l_1)}))$. 

	Apply the induction for the $l_1$-tuple $(a_1, \cdots, a_{l_1-1}, A_1)$. Recall that $\epsilon_1$ is the restriction
	of $\epsilon$ to the interval $[1, l_1]$, we have 
	\begin{equation}\label{eq:001}
	\begin{array}{rcl}
	\E(a_1\cdots a_n)&=&\E(a_1\cdots a_{l_1-1}A_1)\\
	&=&\sum\limits_{\substack{\sigma_1\in NC([l_1]) \\ \sigma_1\leq \epsilon_1}} \left(\sum\limits_{\substack{ \pi_1\in NC([l_1])\\ \sigma_1\leq \pi_1\leq \epsilon_1}}\mu_{}(\sigma_1,\pi_1)\right)\Phi_{\sigma_1}\bigg(a_1,\cdots ,a_{l_1-1}A_{1}\bigg).\\
	\end{array}
	\end{equation}
	We now fix $\sigma_1\in NC(l_1), \sigma_1\leq \epsilon_1$. 
	We shall express $\Phi_{\sigma_1}\bigg(a_1,\cdots ,a_{l_1-1}A_{1}\bigg)$ according to the definition given by (\ref{def:n-B-map}).
	We need to know how $\Phi_{\sigma_1}$ is decomposed. To this end, 
	suppose that $V$ is the block of $\sigma_1$ which contains 
	the element $l_1$. 
	Denote that 
	$V=\{p_1,p_2,\cdots,p_{k_1}\}$, where $p_{k_1}=l_1$.  
	Set  $W_1=[1,p_1-1]$,  $W_2=[p_1+1,p_2-1],\cdots,W_{k_1}=[p_{k_1-1}+1,p_{k_1}-1]=[p_{k_1-1}+1,l_1-1]$
	($W_i$ will be the empty set if $p_{i-1}+1=p_{i}$), as illustrated in the picture below.  

$\ $
	
	\begin{center}
  \thicklines
\begin{tikzpicture}[scale=1.5]

\draw [thick, dashed] (0,0)-- (1,0);
\draw[thick] (1,0)--(2,0);
\draw[thick, dashed] (2,0)--(3,0);
\draw[thick] (3,0)--(4,0);
\draw [thick] (1,0)--(1,-0.5);
\draw [thick] (2,0)--(2,-0.5);
\draw [thick] (3,0)--(3,-0.5);
\draw [thick] (4,0)--(4,-0.5);
\put (3, -6) {$W_1$};
\put (20, -6) {$W_2$};
\put (50, -6) {$W_{k_1}$};
\put (14, -12) {$p_1$};
\put (28, -12) {$p_2$};
\put (42, -12) {$p_{k_1-1}$};
\put (56, -12) {$p_{k_1}=l_1$};
\end{tikzpicture}

  \end{center}

	$\ $
	
	Note that $l_1\not\in W_i$ for all $1\leq i \leq k_1$, we have 
	\begin{equation}\label{eq:002}
		 \begin{array}{rcl}
	&&\Phi_{\sigma_1}(a_1,\cdots ,a_{l_1-1}, A_{1})\\
	 &=&\E\left\{\left(\prod\limits_{k=1}^{k_1-1} \left[ \Phi_{\sigma_1|_{W_k}}(a_1,\cdots, a_{l_1-1},A_1)|_{W_k}\right]a_{p_k}\right)\left[\Phi_{\sigma_1|_{W_{k_1}}}(a_1,\cdots, a_{l_1-1},A_1)|_{W_{k_1}}\right]A_1\right\}\\ [12pt] 
	 	 &=&\E\left\{\left(\prod\limits_{k=1}^{k_1-1} \left[ \Phi_{\sigma_1|_{W_k}}(a_1,\cdots, a_{l_1-1})|_{W_k}\right]a_{p_k}\right)
	 	 \left[\Phi_{\sigma_1|_{W_{k_1}}}(a_1,\cdots, a_{l_1-1})|_{W_{k_1}}\right]A_1\right\}\\ [12pt]
	 	 &=&\E\left\{\left(\prod\limits_{k=1}^{k_1-1} \left[ \Phi_{\sigma_1|_{W_k}}(a_1,\cdots, a_{l_1-1})|_{W_k}\right]a_{p_k}\right)
	 	\left[ \Phi_{\sigma_1|_{W_{k_1}}}(a_1,\cdots, a_{l_1-1})|_{W_{k_1}}\right] 
		a_{l_1}\cdots a_m\right\}.
	\end{array}
\end{equation}
  Denote by $A_2=\left(\prod\limits_{k=1}^{k_1-1} \left[ \Phi_{\sigma_1|_{W_k}}(a_1,\cdots, a_{l_1-1})|_{W_k}\right]a_{p_k}\right)
  \left[ \Phi_{\sigma_1|_{W_{k_1}}}(a_1,\cdots, a_{l_1-1})|_{W_{k_1}}\right]a_{l_1}$. 
	Notice that $|\chi^{-1}(\bb)\cap[l_1+1,n-1]|=m-2$ . 
	We now apply the induction formula for the tuple $(A_2, a_{l_1+1}, \cdots, a_n)$,
	recall that $\chi'$ is the restriction of $\chi$ to the interval $[l_1, n]$
	and $\epsilon'$ is the restriction of $\epsilon$ to the interval $[l_1,n]$, we deduce that
	\begin{equation}\label{eq:003}
	\begin{array}{rcl}
	&& \Phi_{\sigma_1}(a_1,\cdots ,a_{l_1-1}, A_{1})=
	\E(A_2 a_{l_1+1}a_{l_1+2}\cdots a_n)\\
	&=&\sum\limits_{\sigma'\in INC(\chi')} \left(\sum\limits_{\substack{ \pi'\in INC(\chi')\\ \sigma'\leq \pi'\leq \epsilon'}}\mu_{INC}(\sigma',\pi')\right)\Phi_{\sigma'} (A_2 ,a_{l_1+1},a_{l_1+2},\cdots ,a_n). 
	\end{array}
	\end{equation}
	
	We now fix $\sigma'\in INC(\chi'), \sigma'\leq \epsilon'$.
	We need to express $\Phi_{\sigma'} (A_2, a_{l_1+1},a_{l_1+2},\cdots, a_n)$
	according to the definition given in (\ref{def:n-B-map}). To this end, 
	suppose that $V'$ is the block of $\sigma'$ which contains the element $l_1$. 
	Suppose that 
	$V'=\{q_1,q_2,\cdots,q_{k_2}\}$, where $q_{1}=l_1$.  Let  
	$W'_1=[l_1,q_2-1]$,  $W'_2=[q_2+1,q_3-1],\cdots,W'_{k_2}=[q_{k_2}+1,n]$ ($W'_j=\emptyset$
	if $q_{j}+1=q_{j+1}$), as shown in the picture below
 	
	$\ $	
	 	\begin{center}
  \thicklines
\begin{tikzpicture}[scale=1.5]

\draw [thick] (0,0)-- (1,0);
\draw[thick] (1,0)--(2,0);
\draw[thick, dashed] (2,0)--(3,0);
\draw[thick, dashed] (3,0)--(4,0);
\draw [thick] (0,0)--(0,-0.5);
\draw [thick] (1,0)--(1,-0.5);
\draw [thick] (2,0)--(2,-0.5);
\draw [thick] (3,0)--(3,-0.5);
\put (3, -6) {$W_1'$};
\put (20, -6) {$W_2'$};
\put (50, -6) {$W_{k_2}'$};

\put (-5, -12) {$l_1=q_1$};
\put (14, -12) {$q_2$};
\put (28, -12) {$q_{3}$};
\put (42, -12) {$q_{k_2}$};
\end{tikzpicture}

  \end{center}
 
$\ $

	Notice that $l_1\not\in W'_j $ for all $1\leq j\leq q_{k_2}$, we apply
	the induction assumption to the tuple $(A_2, a_{l_1+1},a_{l_1+2},\cdots, a_n)$
	to obtain the following:
	
	\begin{equation} \label{eq:004}
	\begin{array}{rcl}
	&&\Phi_{\sigma'} (A_2, a_{l_1+1},a_{l_1+2},\cdots, a_n)\\ [5pt]
	
		&=&\E\left\{A_2\left[\Phi_{\sigma'|_{W'_1}}(A_2, a_{l_1+1},\cdots, a_{n})|_{W'_1}\right]
	\left(\prod\limits_{k=2}^{k_2} a_{q_k}
	\left[\Phi_{\sigma'|{W'_k}}(A_2, a_{l_1+1},\cdots, a_{n})|_{W'_k}\right]\right)\right\}\\ [12pt]
	
	&=&\E\left\{A_2\left[\Phi_{\sigma'|_{W'_1}}( a_{l_1+1},\cdots, a_{n})|_{W'_1}\right]
	\left(\prod\limits_{k=2}^{k_2} a_{q_k}
	\left[\Phi_{\sigma'|{W'_k}}( a_{l_1+1},\cdots, a_{n})|_{W'_k}\right]\right)\right\}\\ [12pt]
	
	&=&\E \Bigg\{
		 \left(\prod\limits_{k=1}^{k_1-1} \left[ \Phi_{\sigma_1|_{W_k}}(a_1,\cdots, a_{l_1-1})|_{W_k}\right]a_{p_k}\right)
		 \left[ \Phi_{\sigma_1|_{W_{k_1}}}(a_1,\cdots, a_{l_1-1})|_{W_{k_1}}\right]\\ [12pt]
		&&
		a_{l_1}\left[\Phi_{\sigma'|_{W'_1}}(a_{l_1+1},\cdots, a_{n})|_{W'_1}\right]
		\left(\prod\limits_{k=2}^{k_2} a_{q_k}
		\left[\Phi_{\sigma'|{W'_k}}( a_{l_1+1},\cdots, a_{n})|_{W'_k}\right]\right)
	\Bigg\}
	
	\end{array}
	\end{equation}
	
	Recall that $\alpha'_1(\pi):=\left(\alpha_1(\pi), \alpha'(\pi)\right)$
	defined in Proposition \ref{lattice isomorphism}. Let $\sigma=\alpha'^{-1}_1(\sigma_1,\sigma')$.
	We draw the picture below to show the block $V\in \sigma_1$ and the block $V'\in \sigma_2$
which contain $l_1$.
	
	$\ $
	
	\begin{center}
  \thicklines
\begin{tikzpicture}[scale=1.5]

\draw [thick, dashed] (0,0)-- (1,0);
\draw[thick] (1,0)--(2,0);
\draw[thick, dashed] (2,0)--(3,0);
\draw[thick] (3,0)--(4,0);
\draw [thick] (1,0)--(1,-0.5);
\draw [thick] (2,0)--(2,-0.5);
\draw [thick] (3,0)--(3,-0.5);
\draw [thick] (4,0)--(4,-0.5);
\put (3, -6) {$W_1$};
\put (20, -6) {$W_2$};
\put (50, -6) {$W_{k_1}$};
\put (14, -12) {$p_1$};
\put (28, -12) {$p_2$};
\put (42, -12) {$p_{k_1-1}$};
\put (52, -16) {$p_{k_1}=l_1=q_1$};

\draw [thick] (0+4,0)-- (1+4,0);
\draw[thick] (1+4,0)--(2+4,0);
\draw[thick, dashed] (2+4,0)--(3+4,0);
\draw[thick, dashed] (3+4,0)--(4+4,0);
\draw [thick] (0+4,0)--(0+4,-0.5);
\draw [thick] (1+4,0)--(1+4,-0.5);
\draw [thick] (2+4,0)--(2+4,-0.5);
\draw [thick] (3+4,0)--(3+4,-0.5);
\put (65, -6) {$W_1'$};
\put (80, -6) {$W_2'$};
\put (110, -6) {$W_{k_2}'$};

\put (73, -12) {$q_2$};
\put (88, -12) {$q_{3}$};
\put (103, -12) {$q_{k_2}$};
\end{tikzpicture}

  \end{center}

$\ $

	Then, 
	$$ 
	\begin{array}{rcl}
	 	&&\Phi_{\sigma'} (A_2, a_{l_1+1},a_{l_1+2},\cdots, a_n) \\  [5pt]
	&=&\E\Bigg\{ \left( \prod\limits_{k=1}^{k_1-1} \left[\Phi_{\sigma_1|_{W_k}}(a_1,\cdots, a_{l_1-1})|_{W_k}\right]a_{p_k}\right)
	\Phi_{\sigma_1|_{W_k}}(a_1,\cdots, a_{l_1-1})|_{W_k}\\   [5pt]
	&&a_{l_1}\left[\Phi_{\sigma'|_{W'_1}}(a_{l_1+1},\cdots, a_{n}|_{W'_1})\right]
	\left(\prod\limits_{k=2}^{k_2} a_{q_k}\Phi_{\sigma'|_{W'_k}}(a_{l_1+1},\cdots, a_{n}|_{W'_k})\right)\Bigg\}\\  [5pt]
	&=&\Phi_{\sigma} (a_1,\cdots, a_n). \\
	\end{array}
	$$
	Putting $(\ref{eq:001}), (\ref{eq:002}), (\ref{eq:003}), (\ref{eq:004})$ together, we have
	\[
	  \begin{array}{rcl}
	  &&\E(a_1\cdots a_n)\\
	    	&=&\sum\limits_{\substack{\sigma_1\in NC([l_1]) \\ \sigma_1\leq \epsilon_1}} \left(\sum\limits_{\substack{ \pi_1\in NC([l_1])\\ \sigma_1\leq \pi_1\leq \epsilon_1}}\mu_{}(\sigma_1,\pi_1)\right)\Phi_{\sigma_1}\bigg(a_1,\cdots ,a_{l_1-1}A_{1}\bigg).\\
	    	
	  &=&
	  \sum\limits_{\substack{\sigma_1\in NC([l_1]) \\ sigma_1\leq \epsilon_1}} \left(\sum\limits_{\substack{ \pi_1\in NC([l_1])\\ \sigma_1\leq \pi_1\leq \epsilon_1}}\mu_{}(\sigma_1,\pi_1)\right)
	  \sum\limits_{\sigma'\in INC(\chi')} \left(\sum\limits_{\substack{ \pi'\in INC(\chi')\\ \sigma'\leq \pi'\leq \epsilon'}}\mu_{INC}(\sigma',\pi')\right)\Phi_{\sigma'} (A_2 ,a_{l_1+1},a_{l_1+2},\cdots ,a_n)\\
	  &=&
	    \sum\limits_{\substack{\sigma_1\in NC([l_1]) \\ \sigma_1\leq \epsilon_1}} \left(\sum\limits_{\substack{ \pi_1\in NC([l_1])\\ \sigma_1\leq \pi_1\leq \epsilon_1}}\mu_{}(\sigma_1,\pi_1)\right)
	    \sum\limits_{\sigma'\in INC(\chi')} \left(\sum\limits_{\substack{ \pi'\in INC(\chi')\\ \sigma'\leq \pi'\leq \epsilon'}}\mu_{INC}(\sigma',\pi')\right)
	    \Phi_{\alpha'^{-1}_1(\sigma_1,\sigma')} (a_1,\cdots ,a_n)\\  [15pt]
	   &=&\sum\limits_{\sigma\in INC(\chi)} \Big(\sum\limits_{\substack{ \pi\in INC(\chi)\\ \sigma\leq \pi\leq \epsilon}}\mu_{INC}(\sigma,\pi)\Big)\Phi_{\sigma} (a_1,\cdots ,a_n),
	  \end{array}
	\]
	where we used Corollary 3.8 in the last identity and thus
	we obtained our desired equation.

\end{proof}

\section{limit theorems}
Let $A=\big( (a_i)_{i\in \I}, (a_j)_{j\in \J} \big)$ 
be a two faced family of noncommutative random variables in a $\B$-valued probability space
$(\A, \E)$. Let $\omega:\{1,\cdots, n\}\rightarrow \I \bigsqcup\J$
and denote by $\chi_\omega:\{1,\cdots, n\}\rightarrow\{ \f, \bb\}$
the map such that $\chi_\omega(k)=\bb$ if and only if $\omega(k)\in \J$.
\begin{definition}\normalfont
	A two-faced family $A=\big( (a_i)_{i\in \I}, (a_j)_{j\in \J} \big)$
	in a $B$-valuede probability space $(\A,\E)$ is said
	to have a centered free-Boolean limit if, for all $n\neq 2$, 
	\[
	\kappa_{{\chi_\omega}, _{1_n}}\left(
	a_{\omega(1)}b_1, \cdots, a_{\omega(n-1)}b_{n-1}, a_{\omega(n)} \right)=0,
	\]
	for all $\omega:\{1,\cdots,n\}\rightarrow \I\bigsqcup\J$ and $b_1,\cdots, b_{n-1}\in \B$.
	
	The distribution defined by the 
	the two faced family $A$ 
	is called an operator-valued free-Boolean Gaussian distribution
	with covariance $C=(c_{i,j})_{i,j\in \I\bigsqcup\J}$, where $C$ is defined by
	$c_{\omega(1),\omega(2)}(b):=\kappa_{\chi_\omega, 1_2}(a_{\omega(1)}, ba_{\omega(2)})$
	for all $\omega:\{1,2\}\rightarrow \I\bigsqcup \J$ and $b\in \B$.
\end{definition}

\begin{proposition}\normalfont
	Let $A=\big( (a_i)_{i\in \I}, (a_j)_{j\in \J} \big)$ be a two faced family 
	of noncommutative random varialbes in a $\B$-valued probability space $(\A, \E)$. Let $\omega:\{1,2\}\rightarrow\I\bigsqcup\J$.
	Then,
	\[
	\kappa_{{\chi_\omega}, _{1_2}}(a_{\omega(1)}, a_{\omega(2)})
	=\E(a_{\omega(1)} a_{\omega(2)})-\E(a_{\omega(1)})\E( a_{\omega(2)}).
	\]
\end{proposition}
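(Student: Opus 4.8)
The plan is to evaluate the defining sum for $\kappa_{\chi_\omega, 1_2}$ directly, exploiting that the case $n=2$ makes the underlying lattice trivial. First I would identify the index set of the sum. By the Remark following the definition of $INC(\chi)$, the set $INC(\chi)$ does not depend on the values of $\chi$ at the endpoints $1$ and $n$; here $n=2$, so $\chi_\omega^{-1}(\bb)\cap[2,n-1]=\chi_\omega^{-1}(\bb)\cap[2,1]=\emptyset$, and therefore $INC(\chi_\omega)=NC(2)$ regardless of whether $a_{\omega(1)},a_{\omega(2)}$ sit in the left or the right face. The lattice $NC(2)$ has exactly two elements, $0_2=\{\{1\},\{2\}\}$ and $1_2=\{\{1,2\}\}$, with $0_2\leq 1_2$, so the defining formula
\[
\kappa_{\chi_\omega, 1_2}(a_{\omega(1)}, a_{\omega(2)})
=\sum_{\substack{\sigma\leq 1_2\\ \sigma\in INC(\chi_\omega)}}\mu_{INC}(\sigma,1_2)\,\Phi_\sigma(a_{\omega(1)}, a_{\omega(2)})
\]
reduces to a sum of precisely two terms, indexed by $\sigma=1_2$ and $\sigma=0_2$.

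Next I would compute the two ingredients of each term. For the M\"obius factors, $\mu_{INC}(1_2,1_2)=1$ by the normalization $\mu_{INC}*\zeta_{INC}=\delta_{INC}$, while $\mu_{INC}(0_2,1_2)=-1$; the latter follows from Proposition \ref{M product formula}, since for $n=2$ one has $m=1$ and a single interval $[l_0,l_1]=[1,2]$, so $\mu_{INC}(0_2,1_2)=\mu_{NC}(0_2,1_2)=-1$. For the moment functionals, $\Phi_{1_2}(a_{\omega(1)},a_{\omega(2)})=\E(a_{\omega(1)}a_{\omega(2)})$ directly from $\Phi^{(2)}$, while unwinding the recursive definition (\ref{def:n-B-map}) for the interval block $\{1\}$ of $0_2$ gives
\[
\Phi_{0_2}(a_{\omega(1)},a_{\omega(2)})=\E\big(\E(a_{\omega(1)})\,a_{\omega(2)}\big)=\E(a_{\omega(1)})\,\E(a_{\omega(2)}),
\]
where the last equality uses the $\B$-$\B$-bimodule property of $\E$.

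Substituting these values into the two-term sum then yields
\[
\kappa_{\chi_\omega, 1_2}(a_{\omega(1)},a_{\omega(2)})
=\E(a_{\omega(1)}a_{\omega(2)})-\E(a_{\omega(1)})\,\E(a_{\omega(2)}),
\]
which is exactly the claim. I expect no genuine obstacle here: the computation is forced once the index set is pinned down. The only two points deserving a moment of care are confirming that $INC(\chi_\omega)=NC(2)$ irrespective of $\chi_\omega$ (so that the free/Boolean distinction is invisible at the level of the second cumulant), and correctly extracting $\Phi_{0_2}=\E(\,\cdot\,)\,\E(\,\cdot\,)$ from the recursion (\ref{def:n-B-map}); both are immediate from the cited Remark and the definition of $\Phi_\sigma$.
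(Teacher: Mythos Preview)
Your proposal is correct and is exactly the direct unwinding of the definition that the paper has in mind; in fact the paper states this proposition without proof, treating it as an immediate consequence of the definition of $\kappa_{\chi,1_2}$ together with $INC(\chi_\omega)=NC(2)$ for $n=2$. Your identification of the two partitions, the M\"obius values $1$ and $-1$, and the evaluation of $\Phi_{1_2}$ and $\Phi_{0_2}$ via the bimodule property of $\E$ are all accurate.
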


\begin{theorem}\normalfont
	Let $ \Big\{A_m=\big( (a_{m,i})_{i\in \I}, (a_{m,j})_{j\in \J} \big)\Big\}_{m=1}^{\infty}$
	be a free-Boolean sequence of families in a $B$-valued 
	probability space $(\A, \E)$ where $\B$ is a Banach space, such that
	\begin{enumerate}
		\item $\E(a_{m,k})=0$ for all $m\in\mathbb{N}$ and $k\in \I\bigsqcup\J$.
		\item $\sup\limits_{m\in\mathbb{N}}|| \E(a_{m,\omega(1)}b_1 \cdots 
		a_{m,\omega(n-1)}b_{n-1}a_{m,\omega(n)}) ||\leq D_{\omega}<\infty$
		for all $n\in \mathbb{N}$, $\omega:\{1,\cdots, n\}\rightarrow \I\bigsqcup\J$,
		and $b_1,\cdots, b_{n-1}\in \B$.
		\item $\lim\limits_{N\rightarrow \infty} N^{-1} \sum\limits_{1\leq m\leq N}
		\E(a_{m, \omega(1)}b a_{m,\omega(2)})= c_{\omega(1),\omega(2)}(b)\in \B$,
		for all $\omega:\{1,2\}\rightarrow \I\bigsqcup\J$ and $b\in \B$. 
	\end{enumerate}
	Let $S_{N, k}=N^{-1/2}\sum\limits_{1\leq m \leq N}a_{m,k}$ for $k\in\I\bigsqcup\J$
	and $S_N=\big( (S_{N,i})_{i\in \I}, (S_{N, j})_{j\in \J} \big)$.
	Denote by $\gamma_C$ the free-Boolean limit distribution in Definition ***,
	with $C=(c_{i,j})_{i,j\in \I\bigsqcup\J}$. 
	We have
	\[
	\lim_{N\rightarrow \infty} \mu_{S_N}(P)=\gamma_C(P),
	\]
	for all $P\in \C\langle a_k| k\in \I\bigsqcup\J \rangle$.
\end{theorem}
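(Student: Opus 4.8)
The plan is to compute the limit of every $\B$-valued mixed moment of $S_N$ and match it with the moment prescribed by $\gamma_C$. Fix $n$, a map $\omega:\{1,\dots,n\}\to\I\sqcup\J$ with associated $\chi_\omega$, and elements $b_1,\dots,b_{n-1}\in\B$; it suffices to analyze
\[
\E\big(S_{N,\omega(1)}b_1\cdots b_{n-1}S_{N,\omega(n)}\big),
\]
since evaluating $\mu_{S_N}$ on a monomial of $P$ is of this form (with all $b_i=1_\B$), while nontrivial $\B$-insertions appear automatically once the cumulants are factorized. First I would apply the moment-cumulant formula of Lemma \ref{Moments-cumulant} to rewrite this moment as $\sum_{\pi\in INC(\chi_\omega)}\kappa_{\chi_\omega,\pi}(S_{N,\omega(1)}b_1,\dots,b_{n-1}S_{N,\omega(n)})$, and then invoke the multiplicative property of Theorem \ref{multiplicative property} to express each $\kappa_{\chi_\omega,\pi}$ as a nested composition of the full block cumulants $\kappa_{\chi_\omega|_V,1_V}$ running over the blocks $V$ of $\pi$, where the inner block cumulants serve as the $\B$-insertions for the outer ones.

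The next step is to reduce each block cumulant to a single sum over the family index $m$. Expanding $S_{N,k}=N^{-1/2}\sum_{m=1}^N a_{m,k}$ by multilinearity, and using that the families $\{A_m\}$, being free-Boolean independent, are combinatorially free-Boolean independent by Theorem \ref{Main}, the vanishing of mixed cumulants (Definition \ref{FB cumlant definition} and Proposition \ref{vanishing cumu}) forces $\kappa_{\chi_\omega,\pi}(a_{m_1,\cdot}b_\cdot,\dots)$ to vanish unless the super-index map is constant on every block of $\pi$. Hence the expansion decouples into an independent choice of an index $m_V$ for each block $V$, and for a block with $|V|=s$ the corresponding factor becomes
\[
N^{-s/2}\sum_{m=1}^N\kappa_{\chi_\omega|_V,1_V}\big(a_{m,\cdot}b_\cdot,\dots,a_{m,\cdot}\big),
\]
the $\B$-insertions $b_\cdot$ now absorbing the inner block cumulants.

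Here the main work is a uniform norm estimate. Using that $\B$ is a Banach algebra and that each $\kappa_{\chi',1_s}$ is a finite $\mu_{INC}$-weighted sum of moments $\Phi_\sigma$, each of which is bounded independently of $m$ by assumption $(2)$, one obtains a constant $D$ (depending only on $\omega$, the $b_i$, and $s$) with $\|\kappa_{\chi_\omega|_V,1_V}(a_{m,\cdot}b_\cdot,\dots)\|\le D$ for all $m$, so that the rescaled block factor has norm $O(N^{1-s/2})$. I would then classify by block size: a singleton block gives a first-order factor $N^{-1/2}\sum_m\E(a_{m,k})b=0$ by $(1)$; a block of size $s\ge 3$ gives $O(N^{1-s/2})\to 0$ while the remaining factors stay bounded; and a block of size $2$ gives $N^{-1}\sum_m\kappa_{\chi_\omega,1_2}(a_{m,\cdot},b\,a_{m,\cdot})$, which by the preceding Proposition equals $N^{-1}\sum_m\E(a_{m,\cdot}b\,a_{m,\cdot})$ (the first-order terms cancel by $(1)$) and converges to the covariance $c_{\cdot,\cdot}(b)$ by $(3)$. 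Consequently a term indexed by $\pi$ survives in the limit if and only if every block of $\pi$ has size exactly $2$.

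Combining these facts, in the limit $N\to\infty$ only the interval-noncrossing pair partitions $\pi\in INC(\chi_\omega)$ survive, each contributing the nested product of covariances $C$ dictated by Theorem \ref{multiplicative property}. Finally, I would identify this limit with $\gamma_C$: applying the moment-cumulant formula of Lemma \ref{Moments-cumulant} to the distribution $\gamma_C$, whose free-Boolean cumulants vanish in every order $\ne 2$ and equal $C$ in order $2$ by definition, leaves exactly the same sum over interval-noncrossing pair partitions. Hence $\lim_{N\to\infty}\E(S_{N,\omega(1)}b_1\cdots b_{n-1}S_{N,\omega(n)})$ equals the corresponding $\gamma_C$-moment, which yields $\lim_N\mu_{S_N}(P)=\gamma_C(P)$ for all $P\in\C\langle a_k\mid k\in\I\sqcup\J\rangle$. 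I expect the principal obstacle to be the uniform boundedness of the nested cumulants together with the careful bookkeeping of how the factors $N^{1-s/2}$ propagate through the block factorization, so that exactly the pair partitions are selected in the limit.
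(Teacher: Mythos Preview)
Your argument is correct and uses the same core ingredients as the paper: the additivity of free-Boolean cumulants across the free-Boolean family $\{A_m\}$, the uniform bound from assumption~(2) to kill orders $n\ge 3$, assumption~(1) to kill order~$1$, and Proposition~5.2 together with assumption~(3) to identify the order-$2$ limit with the covariance.

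The only difference is organizational. The paper works directly at the cumulant level: since joint distributions are uniquely determined by the family of full cumulants $\kappa_{\chi_\omega,1_n}$, it suffices to show that $\kappa_{\chi_\omega,1_n}(S_{N,\omega(1)}b_1,\dots,S_{N,\omega(n)})$ converges to the corresponding Gaussian cumulant for each fixed $n$, $\omega$, and $b_1,\dots,b_{n-1}$. One application of the additivity property (Proposition~4.4) immediately gives
\[
\kappa_{\chi_\omega,1_n}(S_{N,\omega(1)}b_1,\dots,S_{N,\omega(n)})
= N^{-n/2}\sum_{m=1}^N \kappa_{\chi_\omega,1_n}(a_{m,\omega(1)}b_1,\dots,a_{m,\omega(n)}),
\]
and the three cases $n=1$, $n=2$, $n\ge 3$ are then handled in one line each. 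Your route---expanding the moment via Lemma~\ref{Moments-cumulant}, factorizing each $\kappa_{\chi_\omega,\pi}$ via Theorem~\ref{multiplicative property}, and then arguing block-by-block that only pair partitions survive---reproduces exactly this calculation on every block of every $\pi$, so it arrives at the same place but with the extra bookkeeping of the nested structure that you flag in your last sentence. The paper's shortcut avoids that bookkeeping entirely, since convergence of all $\kappa_{\chi_\omega,1_n}$ (for arbitrary $\B$-insertions) already forces convergence of every $\kappa_{\chi_\omega,\pi}$ and hence of every moment.
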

\begin{proof}
	Since the joint distributions are determined by free-Boolean cumulants uniquely, 
	it is enough to show that
	\[
	\lim_{N\rightarrow \infty}\kappa_{{\chi_\omega}, _{1_n}}\big(
	S_{N,\omega(1)}b_1, \cdots, S_{N,\omega(n-1)}b_{n-1}, S_{N, \omega(n)} \big)
	=\kappa_{{\chi_\omega}, _{1_n}}\big( S_{\omega(1)}b_1,\cdots, S_{\omega(n-1)}b_{n-1},
	S_{\omega(n)} \big),
	\]
	where the two faced family $S=\big( (S_i)_{i\in \I}, (S_j)_{j\in \J} \big)$
	has a centered $\B$-valued free-Boolean Gaussian dsitribution with covariance matrix 
	$C$, for all $n\in \mathbb{N}$,
	$\omega:\{1,\cdots,n\}\rightarrow \I\bigsqcup\J$,
	and $b_1,\cdots, b_{n-1}\in\B$. 
	
	By the additivity property of free-Boolean cumulants, we have
	\begin{align*}
	&\kappa_{{\chi_\omega}, _{1_n}}\big( S_{N,\omega(1)}b_1,\cdots, S_{N,\omega(n-1)}b_{n-1}, 
	S_{N,\omega(n)} \big)\\
	=&\frac{1}{N^{n/2}}\sum_{1\leq m\leq N}\kappa_{\chi_\omega, _{1_n}}\big(
	a_{m, \omega(1)}b_1, \cdots, a_{m,\omega(n-1)}b_{n-1}, a_{m,\omega(n)} \big).
	\end{align*}
	Since the free-Boolean cumulant are universal polynomial of mixed moments, we deduce from 
	assumption $(2)$ that
	\[
	\sup_{m\in \mathbb{N}} || \kappa_{\chi_\omega, _{1_n}}\big(
	a_{m, \omega(1)}b_1, \cdots, a_{m,\omega(n-1)}b_{n-1}, a_{m,\omega(n)} \big)|| <\infty
	\]
	and hence
	\[
	\lim_{N\rightarrow\infty }
	\kappa_{\chi_\omega, _{1_n}}\big( S_{N,\omega(1)}b_1,\cdots, S_{N,\omega(n-1)}b_{n-1}, 
	S_{N,\omega(n)} \big)=0
	\]
	for $n\geq 3$. 
	
	As $\kappa_{\chi_\omega, _{1_1}}(a_{m,\omega(1)})=\E(a_{m,\omega(1)})=0$ for all $m\in\mathbb{N}$ and $\omega:\{1\}
	\rightarrow \I\bigsqcup\J$,
	we have $\kappa_{\chi_\omega, _{1_1}} (S_{N, \omega(1)})=0$ for all $N\in \mathbb{N}$ and $\omega:\{1\}\rightarrow \I\bigsqcup\J$. 
	Finally, by assumption $(3)$, and Proposition 5.2, we have
	\begin{align*}
	&\kappa_{\chi_\omega, _{1_2}}(S_{N,\omega(1)}b, S_{N,\omega(2)})\\
	=&\frac{1}{N}\sum_{1\leq m \leq N}\kappa_{\chi_\omega, _{1_2}}(a_{n,\omega(1)}b, a_{n,\omega(2)})\\
	=&\frac{1}{N}\sum_{1\leq m\leq N} \E(a_{n,\omega(1)}ba_{n,\omega(2)})\rightarrow C_{\omega(1), \omega(2)}(b),  
	\end{align*}
as $N\rightarrow\infty$,	for all $\omega:\{1,2\}\rightarrow \I\bigsqcup\J$ and $b\in \B$. This finishes the proof. 
\end{proof}

\section{Moment-conditions for free-Boolean independence}
Let $\{(\X_i, \mathring{\X}_i, p_i)\}_{i \in \I}$ be $\B$-$\B$-bimodules with specified projectin, 
and $\{\X, \mrx, p \}$ be the reduced free product with amalgamation over $\B$.
For each $i\in\I$, denote $\A_{i,\f}=\lambda_i(\LL(\X_i))$ and
$\A_{i,\bb}=\beta_i(\LL(\X_i))=P_i\lambda_i(\LL(X_i))P_i$, where $P_i$ is the projection onto 
the  subspace $\B\oplus \mrx_i$. 
We also denote by $\A_i$ the algebra generated by $\A_{i,\f}\cup\A_{i,\bb}$.

Given a family $\{C_i, D_i\}_{i\in \I}$ of free-Boolean pair of $B$-faces
in a $B$-valued probability space $(\A, \E)$, to study the mixed
moments of the family, one can identify 
$C_i$ with $\lambda_i(\gamma_i(C_i))$ and 
identify $D_i$ with $\beta_i(\delta_i(D_i))$
following Definition \ref{free-BooleanDef}. 
In this way, we regard $C_i$ as a subalgebra
of $\A_{i,\f}$ and $D_i$ as a subalgebra of $\A_{i,\bb}$
throughout this section. 

\begin{definition}\normalfont
	Given a set $S_i\subset \A_{i,\f}\cup \A_{i,\bb}$
	and $a_1,\cdots, a_m\in S_i$
	their
	product $A=a_1\cdots a_m$ is called a \emph{simple product} 
	of elements from $S_i$.
	It is called a \emph{Boolean product} of elements from $S_i$ if $a_k\in S_i\cap \A_{i,\bb}$ 
	for some $1\leq k\leq m$. 
\end{definition}
If $A=a_1\cdots a_m$ is a simple product, but not a Boolean product, then each 
$a_i \in S_i\cap \A_{i,\f}$. 
A Boolean product of elements from $\A_{i,\f}\cup \A_{i,\bb}$ 
has a very simple form. 
\begin{proposition}\normalfont
	\label{Boolean product I}
Let $A\in \A_i$ ($i\in \I$) be a Boolean product
of elements from $\A_{i,\f}\cup \A_{i,\bb}$. Then $A\in \A_{i,\bb}$,
in particular, $A(\X)\subset\X_i$.
\end{proposition}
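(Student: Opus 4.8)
The plan is to exploit the commutation relation established in Proposition \ref{simple Boolean}, namely $P_i\lambda_i(a)=\lambda_i(a)P_i$ for all $a\in\LL(\X_i)$, in order to reduce the entire product to a single $\lambda_i$-factor followed by exactly one copy of $P_i$. First I would record the two possible shapes of a factor of $A$: a free factor $a_k\in\A_{i,\f}$ is by definition $\lambda_i(c_k)$ for some $c_k\in\LL(\X_i)$, whereas a Boolean factor $a_k\in\A_{i,\bb}=P_i\lambda_i(\LL(\X_i))P_i$ has the form $P_i\lambda_i(d_k)P_i$, which by Proposition \ref{simple Boolean} together with $P_i^2=P_i$ simplifies to $\lambda_i(d_k)P_i$. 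Thus every factor can be written uniformly as $\lambda_i(x_k)P_i^{\epsilon_k}$, where $\epsilon_k=1$ precisely when $a_k$ is a Boolean element and $\epsilon_k=0$ otherwise.

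Next I would multiply out $A=a_1\cdots a_m=\lambda_i(x_1)P_i^{\epsilon_1}\cdots\lambda_i(x_m)P_i^{\epsilon_m}$ and commute every occurrence of $P_i$ to the far right past the intervening $\lambda_i$-factors, again invoking Proposition \ref{simple Boolean}. Because $\lambda_i$ is a homomorphism, the $\lambda_i$-factors coalesce into $\lambda_i(x_1\cdots x_m)$, and because $P_i$ is idempotent the accumulated projections collapse to $P_i^{\epsilon_1+\cdots+\epsilon_m}$. Since $A$ is assumed to be a \emph{Boolean} product, at least one $\epsilon_k$ equals $1$, so this exponent is strictly positive and the projections reduce to a single $P_i$. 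Hence $A=\lambda_i(y)P_i$ with $y=x_1\cdots x_m\in\LL(\X_i)$.

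Finally I would rewrite $\lambda_i(y)P_i=P_i\lambda_i(y)P_i$ (using the commutation once more together with $P_i^2=P_i$), which exhibits $A$ as an element of $\A_{i,\bb}=P_i\lambda_i(\LL(\X_i))P_i$. The assertion $A(\X)\subset\X_i$ is then immediate: since $A=P_i(\,\cdot\,)$, its range lies in $P_i\X=\B\oplus\mrx_i$, which is exactly $\X_i$ under the natural identification used in the reduced free product.

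The computation is essentially routine; the only point that genuinely requires care is the bookkeeping of the projections as they migrate through the product, and the whole argument hinges on having Proposition \ref{simple Boolean} available so that the $P_i$'s slide freely past the $\lambda_i$-factors and merge into one. Without that commutation there would be no way to collapse the interspersed projections, so I expect this coalescing step — rather than any analytic subtlety — to be the crux of the proof.
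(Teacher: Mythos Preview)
Your argument is correct and is essentially the same as the paper's: both proofs invoke Proposition~\ref{simple Boolean} to slide the projections $P_i$ past the $\lambda_i$-factors and collapse the product into the form $P_i\,\lambda_i(y)\,P_i\in\A_{i,\bb}$. The only difference is cosmetic---you move all $P_i$'s to the right first and then pull one back to the left, whereas the paper moves them outward in both directions at once.
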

\begin{proof}
Write $A=a_1\cdots a_m$ where $a_k\in \A_{i,\f}\cup\A_{i,\bb}$ ($1\leq k\leq m$).
For an element $a_k\in \A_{i,\bb}$, it can be written as
$a_k=P_ia_k'P_i$, where $a_k'\in \A_{k,\f}$ by definition.
Hence $A=P_i b_1\cdots b_m P_i$, where 
\[  
b_k= \left\{
\begin{array}{ll}
 a_k', \quad&\text{if}\quad a_k\in \A_{i,\bb}\\
 a_k, \quad&\text{if}\quad a_k\in \A_{i,\f}
\end{array} 
\right. \]
by Proposition \ref{simple Boolean}. 
The assertion follows. 
\end{proof}

To state our result in general, from now on, we will 
let $S_i\subset \A_{i,\f}\cup\A_{i,\bb}$ for each $i\in \I$.
Typically, $S_i=C_i\cup D_i$ or $S_i=\A_{i,\f}\cup \A_{i,\bb}$. 
\begin{lemma}\label{BFproduct}\normalfont
	Let $A_1\in \A_i$ and $A_2\in \A_j$ be two simple product of elements from $S_i$ and $S_j$ respectively
	and $i\neq j$. If $A_1$ is a Boolean product, then 
	\[
	A_1A_2 1_{\B}=A_1 \E_{\LL(\X)}(A_2). 
	\]
\end{lemma}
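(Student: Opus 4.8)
The plan is to evaluate both sides as vectors in $\X$ and to reduce the identity to the single assertion that $A_1$ kills the $\mrx_j$--component of the vector $A_2(1_\B\oplus 0)$.

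First I would record the invariance fact that drives the argument: by the proof of Proposition \ref{simple Boolean}, the subspace $\X_j=\B\oplus\mrx_j$ is invariant under $\lambda_j(a)$ for every $a\in\LL(\X_j)$, and since $P_j$ restricts to the identity on $\X_j$, the map $\beta_j(a)=P_j\lambda_j(a)P_j$ preserves $\X_j$ as well. Consequently every factor of the simple product $A_2$, being an element of $\A_{j,\f}\cup\A_{j,\bb}$, maps $\X_j$ into itself; as the vacuum vector $1_\B\oplus 0$ lies in $\B\subset\X_j$, we get $A_2(1_\B\oplus 0)\in\X_j=\B\oplus\mrx_j$. Writing out this membership as a decomposition,
\[
A_2(1_\B\oplus 0)=\big(\E_{\LL(\X)}(A_2)\oplus 0\big)+\big(0\oplus\mathring\xi\big),\qquad \mathring\xi\in\mrx_j,
\]
where the $\B$--component is $p\big(A_2(1_\B\oplus 0)\big)=\E_{\LL(\X)}(A_2)$ by the definition of the expectation.

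Applying $A_1$ and using linearity gives
\[
A_1A_2(1_\B\oplus 0)=A_1\,\E_{\LL(\X)}(A_2)+A_1\mathring\xi,
\]
so the lemma is equivalent to the claim $A_1\mathring\xi=0$. For this I would invoke Proposition \ref{Boolean product I}: as $A_1$ is a Boolean product of elements of $\A_{i,\f}\cup\A_{i,\bb}$, it belongs to $\A_{i,\bb}=P_i\lambda_i(\LL(\X_i))P_i$, so $A_1=P_i\lambda_i(b')P_i$ for some $b'\in\LL(\X_i)$. The decisive point is that the rightmost projection annihilates $\mathring\xi$: since $i\neq j$, the summand $\mrx_j$ is one of the direct summands of $\X$ on which $P_i$ (the projection onto $\B\oplus\mrx_i$) vanishes, so $P_i\mathring\xi=0$ and therefore $A_1\mathring\xi=P_i\lambda_i(b')P_i\mathring\xi=0$. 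Combining the two displays yields $A_1A_2\,1_\B=A_1\,\E_{\LL(\X)}(A_2)$, as desired.

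I expect the only step needing genuine care to be the first one, namely checking that $A_2(1_\B\oplus 0)$ stays inside $\X_j$ and that its $\B$--part is exactly $\E_{\LL(\X)}(A_2)$; this is where one must use that \emph{both} faces $\A_{j,\f}$ and $\A_{j,\bb}$ leave $\X_j$ invariant. Once the vector $A_2(1_\B\oplus 0)$ is known to lie in $\B\oplus\mrx_j$, the conclusion follows immediately from the orthogonality $P_i(\mrx_j)=\{0\}$ for $i\neq j$.
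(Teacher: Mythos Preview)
Your proof is correct and follows the same overall strategy as the paper: decompose $A_2(1_\B\oplus 0)$ as $\E_{\LL(\X)}(A_2)\oplus\mathring\xi$ with $\mathring\xi\in\mrx_j$, and then show that $A_1$ annihilates $\mathring\xi$. The one difference is in how the annihilation step is carried out. The paper writes $A_1=a_1 b a_2$ with $b\in\A_{i,\bb}$ the rightmost Boolean factor and $a_2$ a product of elements of $\A_{i,\f}$, then computes explicitly that $a_2(\mathring\xi)=\lambda_i(T)(\mathring\xi)\in\mrx_j\oplus(\mrx_i\otimes_\B\mrx_j)$ via the reduced free product isomorphism $V_i$, and finally uses $b=P_ibP_i$ to kill this. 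You instead invoke Proposition~\ref{Boolean product I} to write $A_1=P_i\lambda_i(b')P_i$ in one stroke, so that the rightmost $P_i$ kills $\mathring\xi\in\mrx_j$ directly. Your route is shorter and avoids the explicit free-product computation; the paper's route is more self-contained and in fact reproves the relevant part of Proposition~\ref{Boolean product I} on the spot. The paper itself acknowledges (Remark following Lemma~\ref{zero product}) that using Proposition~\ref{Boolean product I} simplifies several arguments of this type.
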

\begin{proof}
		Write $A_2 (1_\B)=\E_{\LL(\X)}(A_2)\oplus \mathring{A}_2$, where
	$\mathring{A}_2\in \mrx_j$. As $A_1$ is a Boolean product, 
	at least one of the factor is from $\A_{i,\bb}$, 
	we thus can write 
	$A_1=a_1ba_2$, where $a_2$ is a simple product of elements from $\A_{i,\f}$
	and $b\in \A_{i,\bb}$. We can express 
	$a_2=\lambda_i(T)$, where $T\in \LL_{\B}(\X_i)$.
	
	Observe that
	\begin{align*}
	\lambda_i(T)(\mathring{A}_2)&=V_i (T\otimes I_{\X(i)})V_i^{-1}\mathring{A}_2\\
	&=V_i(T\otimes I_{\X(i)})(1_{\B}\otimes \mathring{A}_2)\\
	&=V_i\big[ \E_{\LL(X_i)}(T)\otimes \mathring{A}_2 + (T-\E_{\LL(\X_i)}(T))\otimes \mathring{A}_2 \big].
	\end{align*}
	Hence, $\lambda_i(T)\mathring{A}_2 \in \mrx_j \oplus (\mrx_i\otimes \mrx_j)$. 
	As $b=P_i b P_i$, where $P_i$ is the projection onto $\B\oplus \mrx_i$, we deduce
	that $b\mathring{A}_2=0$. We then have
	\begin{align*}
	A_1 A_2 1_\B&=(a_1 b a_2) \left( \E_{\LL(\X)}(A_2)+\mathring{A}_2\right)\\
	&=(a_1 b a_2) ( \E_{\LL(\X)}(A_2))\\
	&=A_1 ( \E_{\LL(\X)}(A_2)).
	\end{align*}
	This finishes the proof. 
\end{proof}

An application of the preceeding lemman and 
the bimodule property of expection $\E_{\LL(\X)}$
implies the following result.
\begin{corollary}\label{Boolean Moments}\normalfont
	For $1\leq i\leq m$, let $A_i\in \A_{k(i)}$ be Boolean product 
	of elements from $\A_{k(i),\f}\cup \A_{k(i),\bb}$ and $k(1)\neq k(2)\neq \cdots \neq k(m)$.
	Then, we have
	\[
	\E_{\LL(\X)}(A_1\cdots A_m)=\E_{\LL(\X)}(A_1)\cdots \E_{\LL(\X)}(A_m). 
	\]
\end{corollary}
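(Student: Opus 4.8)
The plan is to prove the factorization by induction on $m$, peeling off one factor at a time from the right. The case $m=1$ is vacuous, so it suffices to establish the single reduction
\[
\E_{\LL(\X)}(A_1\cdots A_m)=\E_{\LL(\X)}(A_1\cdots A_{m-1})\,\E_{\LL(\X)}(A_m)
\]
for $m\ge 2$ and then apply the inductive hypothesis to the shorter product $A_1\cdots A_{m-1}$, whose factors $A_1,\dots,A_{m-1}$ are again Boolean products sitting in algebras with distinct consecutive indices $k(1)\neq\cdots\neq k(m-1)$, hence satisfy the hypotheses of the statement.

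To prove this reduction I would unwind the definition of the expectation and work at the level of the vector $A_1\cdots A_m(1_\B\oplus 0)\in\X$, using $\E_{\LL(\X)}(A_1\cdots A_m)=p\big(A_1\cdots A_m(1_\B\oplus 0)\big)$. The operator $A_m$ is the rightmost one acting on $1_\B\oplus 0$, so the adjacent pair $A_{m-1}A_m$ is applied directly to $1_\B\oplus 0$ inside the product, that is $A_1\cdots A_m(1_\B\oplus 0)=A_1\cdots A_{m-2}\big(A_{m-1}A_m(1_\B\oplus 0)\big)$. The crucial step is then to invoke Lemma \ref{BFproduct} on this pair: both are simple products, $A_{m-1}$ is a Boolean product lying in $\A_{k(m-1)}$ and $A_m\in\A_{k(m)}$ with $k(m-1)\neq k(m)$, so
\[
A_{m-1}A_m(1_\B\oplus 0)=A_{m-1}\big(\E_{\LL(\X)}(A_m)\oplus 0\big).
\]
This simultaneously discards the $\mrx_{k(m)}$-component of $A_m(1_\B\oplus 0)$ (which is all that survives besides the scalar, by Proposition \ref{Boolean product I}) and replaces the operator $A_m$ by the scalar $\E_{\LL(\X)}(A_m)\in\B$.

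The remaining point is to extract this scalar in the correct order. Writing $\E_{\LL(\X)}(A_m)\oplus 0=(1_\B\oplus 0)\cdot\E_{\LL(\X)}(A_m)$ as a right $\B$-action, and using that $A_1,\dots,A_{m-1}$ and $p$ are all $\B$-$\B$-bimodule maps, right $\B$-linearity lets me pull $\E_{\LL(\X)}(A_m)$ past $A_1\cdots A_{m-2}$ and then past $p$, which yields
\[
\E_{\LL(\X)}(A_1\cdots A_m)=p\big(A_1\cdots A_{m-1}(1_\B\oplus 0)\big)\cdot\E_{\LL(\X)}(A_m)=\E_{\LL(\X)}(A_1\cdots A_{m-1})\,\E_{\LL(\X)}(A_m).
\]
Combined with the induction this gives the asserted formula. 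I expect the only delicate point to be bookkeeping the side on which the scalar emerges: one must use \emph{right} $\B$-linearity, so that $\E_{\LL(\X)}(A_m)$ appears on the right and the factors accumulate in their natural order $\E_{\LL(\X)}(A_1)\cdots\E_{\LL(\X)}(A_m)$; pulling the scalar out on the left via left $\B$-linearity would reverse the order and produce the wrong formula.
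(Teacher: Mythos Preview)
Your proof is correct and is exactly the argument the paper has in mind: the paper states only that the corollary follows from Lemma~\ref{BFproduct} together with the bimodule property of $\E_{\LL(\X)}$, and your induction peeling off $A_m$ on the right via Lemma~\ref{BFproduct} and right $\B$-linearity is precisely that. One tiny wording slip: the scalar $\E_{\LL(\X)}(A_m)$ must be pulled past all of $A_1\cdots A_{m-1}$ (and then $p$), not just $A_1\cdots A_{m-2}$, but your displayed equation already has this right.
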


\begin{lemma}{\label{small space}}
	Let $B\in \A_{i}$ be a Boolean product of elements from $S_i\subset \A_{i,\f}\cup\A_{i,\bb}$,
	then $B(\X)\subset \X_i$. 
\end{lemma}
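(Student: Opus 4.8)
The plan is to deduce this directly from Proposition \ref{Boolean product I}, which already treats the case in which the factors are allowed to range over the entire face $\A_{i,\f}\cup\A_{i,\bb}$. Since the present hypothesis merely restricts the factors to a subset $S_i$ of that face, the statement should be a formal consequence of the more general proposition; the one thing to verify is that the two notions of ``Boolean product'' line up under the inclusion $S_i\subset\A_{i,\f}\cup\A_{i,\bb}$.

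Concretely, I would first unwind the definition of a Boolean product of elements from $S_i$: write $B=a_1\cdots a_m$ with every $a_k\in S_i$, where the Boolean condition supplies an index $k_0$ with $a_{k_0}\in S_i\cap\A_{i,\bb}$. Using $S_i\subset\A_{i,\f}\cup\A_{i,\bb}$, each factor $a_k$ then lies in $\A_{i,\f}\cup\A_{i,\bb}$, while the distinguished factor satisfies $a_{k_0}\in\A_{i,\bb}$. Thus $B$ is a Boolean product of elements from $\A_{i,\f}\cup\A_{i,\bb}$ in exactly the sense required to apply Proposition \ref{Boolean product I}.

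Applying that proposition gives $B\in\A_{i,\bb}$. Since $\A_{i,\bb}=P_i\lambda_i(\LL(\X_i))P_i$ and $P_i$ is the projection onto $\B\oplus\mrx_i=\X_i$, the operator $B$ carries all of $\X$ into the range of $P_i$, so $B(\X)\subset\X_i$, which is the desired conclusion. There is no genuine obstacle in the argument: its entire content is the observation that restricting the factors to the generating set $S_i$ loses nothing relative to the full face, and the lemma is isolated here only so that the subsequent moment computations may be phrased for an arbitrary generating set (typically $S_i=C_i\cup D_i$) rather than for all of $\A_{i,\f}\cup\A_{i,\bb}$.
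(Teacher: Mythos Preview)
Your argument is correct. The paper, instead of invoking Proposition~\ref{Boolean product I}, argues directly: it writes $B=a_1ba_2$ with $b\in\A_{i,\bb}$, uses $b=P_ibP_i$ to get $ba_2(\X)\subset\X_i$, and then observes that $a_1$ preserves $\X_i$. Your reduction to Proposition~\ref{Boolean product I} is cleaner and avoids repeating that computation; the paper's version is more self-contained but essentially reproves the relevant part of that proposition. Either way the content is the same.
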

\begin{proof}
	Observe that $B$ can be written as $B=a_1 b a_2$, where $b\in \A_{i,\bb}$ and $a_1, a_2$ 
	are simple products of elements from $\A_{i,\f}\cup\A_{i,\bb}$. 
	Since $b=P_i b P_i$, where $P_i$ is the projection onto $\X_i=\B\oplus \mrx_i$,
	we deduce that $ba_2(\X)\subset \mrx_i$. Hence the assertion follows. 
\end{proof}

\begin{lemma}\label{zero product} \normalfont
	Let	$A_i\in \A_{k(i)} (1\leq i\leq m)$ be 
	simple products of elements from $S_i\subset \A_{k(i),\f}\cup\A_{k(i),\bb}$. If  the following conditions hold:
	\begin{enumerate}
		\item There exist $1< l_1< l_2< m$ such that $A_{l_1}, A_{l_1+1},\cdots, 
		A_{l_2}$ are not Boolean products.
		\item  $A_{l_1-1}$ and $A_{l_2+1}$ are Boolean products.
		\item  $k(1)\neq k(2)\neq\cdots\neq k(m)$.
		\item $\E_{\LL(\X)}(A_{l_1})=\cdots=\E_{\LL(\X)}(A_{l_2})=0$.
	\end{enumerate}
	Then, the product of operators $A_1\cdots A_m=0$.
\end{lemma}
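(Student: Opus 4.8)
The plan is to show that the composite operator annihilates every vector $\xi\in\X$, by transporting $\xi$ through the product from right to left and tracking it in the grading of $\mrx=\bigoplus_{i_1\neq\cdots\neq i_n}\mrx_{i_1}\otimes_\B\cdots\otimes_\B\mrx_{i_n}$ by its \emph{leading index} $i_1$. The three structural hypotheses are used at three distinct stages: the right Boolean factor $A_{l_2+1}$ forces the incoming vector into a small space, the centered free block $A_{l_1}\cdots A_{l_2}$ pushes it strictly deeper into $\mrx$ while advancing its leading index, and the left Boolean factor $A_{l_1-1}$ then projects it to $0$.

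First I would absorb the right Boolean factor. Set $\zeta=A_{l_2+1}\cdots A_m(\xi)$. Since $A_{l_2+1}$ is a Boolean product, Lemma \ref{small space} gives $\zeta\in\X_{k(l_2+1)}=\B\oplus\mrx_{k(l_2+1)}$. Because $k(l_2+1)\neq k(l_2)$ by hypothesis (3), this vector lies in $\X(k(l_2))$, i.e.\ it ``does not begin with $\mrx_{k(l_2)}$'', which is exactly the input shape needed for the next stage.

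The heart of the argument is a deepening step for the centered free block. Each factor there is non-Boolean, hence of the form $A_l=\lambda_{k(l)}(T_l)$, and a direct computation gives $\E_{\LL(\X)}(\lambda_i(T))=p_i(T(1_\B))$, so the centering hypothesis (4) means precisely $T_l(1_\B)\in\mrx_{k(l)}$, with vanishing scalar part. Using the defining property $V_{k(l)}(\eta)=1_\B\otimes\eta$ valid for any $\eta\in\X(k(l))$, one gets $V_{k(l)}\big(\lambda_{k(l)}(T_l)\eta\big)=T_l(1_\B)\otimes\eta$, so that $A_l\eta\in\mrx_{k(l)}\otimes_\B\X(k(l))$ has leading index $k(l)$ and no scalar component. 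I would then run a downward induction from $l_2$ to $l_1$: starting from $\zeta\in\X(k(l_2))$, the alternation $k(l-1)\neq k(l)$ from hypothesis (3) guarantees that the output of $A_l$, having leading index $k(l)$, again lies in $\X(k(l-1))$, so the formula applies at every stage. The conclusion is that $\theta:=A_{l_1}\cdots A_{l_2}(\zeta)$ lies in $\mrx_{k(l_1)}\otimes_\B\X(k(l_1))$, i.e.\ every summand of $\theta$ begins with $\mrx_{k(l_1)}$.

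Finally I would collapse with the left Boolean factor. By Proposition \ref{Boolean product I}, $A_{l_1-1}=P_{k(l_1-1)}(\cdots)P_{k(l_1-1)}$, so its rightmost action on $\theta$ is the projection $P_{k(l_1-1)}$ onto $\B\oplus\mrx_{k(l_1-1)}$. Since $\theta$ has leading index $k(l_1)\neq k(l_1-1)$, it has no component in $\B\oplus\mrx_{k(l_1-1)}$, whence $P_{k(l_1-1)}\theta=0$, so $A_{l_1-1}\theta=0$ and therefore $A_1\cdots A_m(\xi)=A_1\cdots A_{l_1-1}(\theta)=0$; as $\xi$ was arbitrary, the operator vanishes. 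I expect the main obstacle to be the bookkeeping in the deepening step: one must argue cleanly that centering strips the scalar part of $T_l(1_\B)$ so no shallow term $c\cdot\eta\in\X(k(l))$ survives to spoil the leading-index count, and one must confirm at each stage that the alternation keeps the incoming vector inside $\X(k(l))$ so that the simple identity $V_{k(l)}(\eta)=1_\B\otimes\eta$ remains valid, including when $\eta$ still carries a $\B$-component.
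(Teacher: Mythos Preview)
Your proof is correct and follows essentially the same three-stage structure as the paper's: use Lemma~\ref{small space} for the right Boolean factor, push through the centered free block by induction, then kill with the projection coming from the left Boolean factor. The only minor differences are that the paper tracks the precise tensor grading $\big(\mrx_{k(l_1)}\otimes\cdots\otimes\mrx_{k(l_2)}\big)\oplus\big(\mrx_{k(l_1)}\otimes\cdots\otimes\mrx_{k(l_2)}\otimes\mrx_{k(l_2+1)}\big)$ rather than just the leading index (this sharper bookkeeping is reused in the proof of Proposition~\ref{Free-boolean moments}), and it decomposes $A_{l_1-1}=a_1ba_2$ by hand instead of invoking Proposition~\ref{Boolean product I}; your shortcut for that last step is exactly the simplification the paper itself points out in its Remark immediately following the lemma.
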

\begin{proof}
		Since $A_{l_2+1}$ is a Boolean product,
	it follows from Lemma \ref{small space} that 
	\[
	A_{l_2+1}\cdots A_m (\X) \subset B\oplus \mrx_{k(l_2+1)}.
	\]	
	As $\E_{\LL(X)}(A_{l_1})=\cdots=\E_{\LL(X)}(A_{l_2})=0$
	and each $A_j$ ($l_1\leq j\leq l_2$) is a simple 
	product of elements from $\A_{k(j),\f}$, in this case, we then have
	\begin{align*}
	A_{l_2}(A_{l_2+1}\cdots A_m)(\X)\subset &
	\mrx_{k(l_2)} \otimes \big(B\oplus \mrx_{k(l_2+1)} \big)\\
	\cong & \mrx_{k(l_2)} \oplus 
	\big(\mrx_{k(l_2)}\otimes \mrx_{k(l_2+1)} \big).
	\end{align*}
	By induction, we then have
	\begin{align}\label{eq:009}
	\begin{split}
	(A_{l_1}\cdots A_{l_2})&(A_{l_2+1}\cdots A_m)(\X)
	\\
	\subset & \big( \mrx_{k(l_1)}\otimes\cdots\otimes \mrx_{k(l_2)} \big)\oplus 
	\big(\mrx_{k(l_1)}\otimes\cdots\otimes \mrx_{k(l_2)}\otimes \mrx_{k(l_2+1)} \big).
	\end{split}
	\end{align}
	
	The operator $A_{l_1-1}$ is a Boolean product.
	We may write
	$A_{l_1-1}=a_1 b a_2$ where $a_2$ is a simple product of elements 
	from $\A_{k(l_1-1),\f}$ and $b\in \A_{k(l_1-1), \bb}$. If follows that
	\begin{align*}
	a_2 (A_{l_1}\cdots  A_m) (\X) \subset 
	& \X_{k(l_1-1)}\otimes \big[  \big( \mrx_{k(l_1)}\otimes\cdots\otimes \mrx_{k(l_2)} \big)\oplus 
	\big(\mrx_{k(l_1)}\otimes\cdots\otimes \mrx_{k(l_2)}\otimes \mrx_{k(l_2+1)} \big) \big]\\
	\cong &\big( \mrx_{k(l_1)}\otimes\cdots\otimes \mrx_{k(l_2)} \big)\oplus 
	\big(\mrx_{k(l_1)}\otimes\cdots\otimes \mrx_{k(l_2)}\otimes \mrx_{k(l_2+1)} \big)\\
	\oplus & (\mrx_{k(l_1-1)} \otimes \mrx_{k(l_1)}\otimes\cdots\otimes \mrx_{k(l_2)}) 
	\oplus (\mrx_{k(l_1-1)} \otimes\mrx_{k(l_1)}\otimes\cdots\otimes \mrx_{k(l_2)}\otimes \mrx_{k(l_2+1)}).
	\end{align*}
	
	As $b=P_{k(l_1-1)} bP_{k(l_1-1)}$, where $P_{k(l_1-1)}$ is the projection onto $\B\oplus \mrx_{k(l_1-1)}$.
	We then have
	\[
	b a_2 (A_{l_1}\cdots A_m) ( x) =0\quad \text{for all $x\in \X$}.
	\]
	which implies that  $A_1\cdots A_m=0$. This finishes the proof.
\end{proof}

\begin{remark}\normalfont
When $S_i=\A_{i,\f}\cup \A_{i,\bb}$, note that
$\A_{i,\bb}=P_i \A_{i,\f}P_i$ and Proposition \ref{simple Boolean},
 the Boolean product $A_{l_1-1}$
can be written as 
$A_{l_1-1}=P_{k(l_1-1)} a P_{k(l_1-1)}$, where 
	$a\in \A_{k(l_1-1), \f}$ following Proposition \ref{Boolean product I}. 
	The proof for Proposition \ref{zero product}
	and the proof for Proposition \ref{Free-boolean moments} can be simplified. 
\end{remark}

\begin{proposition}\label{Free-boolean moments} \normalfont
	Let	$A_i\in \A_{k(i)} (1\leq i\leq m)$ be 
	simple products of elements from $S_i\subset \A_{k(i),\f}\cup\A_{k(i),\bb}$. If  the following conditions hold:
	\begin{enumerate}
		\item There exist $1\leq l_1< l_2\leq m$ such that $A_{l_1}, A_{l_1+1},\cdots, 
		A_{l_2}$ are not Boolean products.
		\item Either $l_1=1$ or $A_{l_1-1}$ is a Boolean product.
		\item Either $l_2=m$ or $A_{l_2+1}$ is a Boolean product.
		\item  $k(1)\neq k(2)\neq\cdots\neq k(m)$.
		\item $\E_{\LL(\X)}(A_{l_1})=\E_{\LL(X)}(A_{l_1+1})=\cdots=\E_{\LL(\X)}(A_{l_2})=0$.
	\end{enumerate}
	Then, we have $\E_{\LL(\X)}(A_1\cdots A_m)=0$.
\end{proposition}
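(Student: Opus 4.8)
The plan is to evaluate the expectation directly on the vacuum, $\E_{\LL(\X)}(A_1\cdots A_m)=p\big(A_1\cdots A_m(1_{\B})\big)$, and to show that the vector $A_1\cdots A_m(1_{\B})$ has no component in $\B\subset\X$. Because $\E_{\LL(\X)}$ only probes the image of $1_{\B}$, I will not need the stronger operator identity $A_1\cdots A_m=0$ proved in Lemma \ref{zero product}; it will be enough to control the single vector $\eta:=A_{l_1}\cdots A_m(1_{\B})$, and this is precisely what lets me relax the hypotheses $1<l_1$ and $l_2<m$ of that lemma to the boundary cases $l_1=1$ and $l_2=m$.

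The first and main step is the structural claim that every homogeneous summand of $\eta=A_{l_1}\cdots A_m(1_{\B})$ lies in a reduced word $\mrx_{k(l_1)}\otimes_\B\mrx_{j_2}\otimes_\B\cdots\otimes_\B\mrx_{j_r}$ of length $r\ge 2$ whose leading factor is $\mrx_{k(l_1)}$. I would treat the two endpoints of the centered block separately. If $l_2<m$, then $A_{l_2+1}$ is a Boolean product, so Lemma \ref{small space} gives $A_{l_2+1}\cdots A_m(1_{\B})\in\B\oplus\mrx_{k(l_2+1)}$; feeding this length-$\le 1$ vector into the centered block $A_{l_1}\cdots A_{l_2}$ and reusing the computation leading to $(\ref{eq:009})$ places $\eta$ in $(\mrx_{k(l_1)}\otimes_\B\cdots\otimes_\B\mrx_{k(l_2)})\oplus(\mrx_{k(l_1)}\otimes_\B\cdots\otimes_\B\mrx_{k(l_2)}\otimes_\B\mrx_{k(l_2+1)})$, which has length $\ge 2$ since $l_1<l_2$. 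If $l_2=m$, I instead telescope from the right: each $A_j$ with $l_1\le j\le m$ is a non-Boolean product, hence $A_j=\lambda_{k(j)}(T_j)$ with $T_j(1_{\B})\in\mrx_{k(j)}$ by hypothesis $(5)$, and since the indices alternate, each application of $\lambda_{k(j)}$ to a vector whose leading index differs from $k(j)$ prepends a fresh factor $\mrx_{k(j)}$ through $V_{k(j)}$; inductively this yields $\eta\in\mrx_{k(l_1)}\otimes_\B\cdots\otimes_\B\mrx_{k(m)}$, again of length $\ge 2$ because $l_1<l_2=m$.

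The second step concludes. If $l_1=1$ then $A_1\cdots A_m(1_{\B})=\eta$ has no $\B$-component by the claim, so $p(\eta)=0$ and we are done. If $l_1>1$, then $A_{l_1-1}$ is a Boolean product; writing $A_{l_1-1}=a_1 b a_2$ with $b=P_{k(l_1-1)}b'P_{k(l_1-1)}$ and $b'\in\A_{k(l_1-1),\f}$ as in Proposition \ref{Boolean product I}, I would argue exactly as in the final paragraph of the proof of Lemma \ref{zero product} that $A_{l_1-1}(\eta)=0$: the factors composing $a_2$ all come from $\A_{k(l_1-1)}$ and act only on the $\X_{k(l_1-1)}$-leg under $V_{k(l_1-1)}$, so they leave the tail $\eta$ (whose leading index $k(l_1)\ne k(l_1-1)$) intact and keep every summand of length $\ge 2$; the inner projection $P_{k(l_1-1)}$ onto $\B\oplus\mrx_{k(l_1-1)}$ therefore annihilates $a_2(\eta)$, whence $b\,a_2(\eta)=0$ and $A_{l_1-1}(\eta)=0$. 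Consequently $A_1\cdots A_m(1_{\B})=A_1\cdots A_{l_1-2}\big(A_{l_1-1}(\eta)\big)=0$, and the expectation vanishes.

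I expect the delicate point to be the uniform length-$\ge 2$ estimate of the first step, specifically verifying that applying the factors in $a_2$, some of which may themselves be Boolean and thus carry projections $P_{k(l_1-1)}$, cannot shorten the protected tail $\eta$ below length $2$. The resolution is that whenever such a projection meets the current vector, that vector already has length $\ge 2$ and is simply killed, which is harmless for the conclusion. Packaging the two endpoint computations of Step 1 into one statement about $\eta$ is what makes the boundary cases $l_1=1$ and $l_2=m$ fall out uniformly, without a separate appeal to Lemma \ref{zero product}.
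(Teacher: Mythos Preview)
Your proposal is correct and follows essentially the same route as the paper. The paper also computes $\eta=A_{l_1}\cdots A_m(1_\B)$ and shows that it either lies in $\mrx$ (when $l_1=1$) or is annihilated by $A_{l_1-1}$ (when $l_1>1$), splitting on whether $l_2=m$ or $l_2<m$ exactly as you do; the only organizational difference is that the paper disposes of the interior case $1<l_1<l_2<m$ by quoting Lemma~\ref{zero product} directly, whereas you fold it into your uniform $2\times 2$ treatment. Your ``delicate point'' about Boolean factors in $a_2$ is a non-issue if you choose $b$ to be the \emph{rightmost} Boolean factor in $A_{l_1-1}$, so that $a_2$ lies entirely in $\A_{k(l_1-1),\f}$ --- this is precisely what the paper does.
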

\begin{proof} 
		If $1<l_1<l_2<m$, the asseration follows immediately from Lemma \ref{zero product}.	
	
	If $l_2<m, l_1=1$, from the proof of Lemma \ref{zero product}, we see that
	\begin{align*}
	(A_1\cdots A_m)1_\B=&(A_{l_1}\cdots A_{l_2})(A_{l_2+1}\cdots A_m)1_\B 
	\\
	\in & \big( \mrx_{k(l_1)}\otimes\cdots\otimes \mrx_{k(l_2)} \big)\oplus 
	\big(\mrx_{k(l_1)}\otimes\cdots\otimes \mrx_{k(l_2)}\otimes \mrx_{k(l_2+1)} \big).
	\end{align*}
	Hence the asseration holds in this case as well.
	
	If $l_2=m$, then 
	the assumptions $(4)$ and $(5)$ imply that
	\begin{align}\label{eq:011}
	(A_{l_1}\cdots A_{l_2})1_\B 
	\in \big( \mrx_{k(l_1)}\otimes\cdots\otimes \mrx_{k(l_2)} \big).
	\end{align}
	
	We now have two cases:
	(i) If further $l_1=1$, it is clear that
	$
	\E_{\LL(\X)}(A_1\cdots A_m)=0,
	$
	thanks to (\ref{eq:011}).
	(ii) If $l_1>1$, then $A_{l_1-1}$ is a Boolean product. We may write
	$A_{l_1-1}=a_1 b a_2$ where $a_2$ is a simple product of elements 
	from $\A_{k(l_1-1),\f}$ and $b\in \A_{k(l_1-1), \bb}$. If follows that
	\begin{align*}
	a_2 (A_{l_1}\cdots  A_m) 1_\B \in 
	& \X_{k(l_1-1)}\otimes  \big( \mrx_{k(l_1)}\otimes\cdots\otimes \mrx_{k(l_2)} \big)\\
	\cong &\big( \mrx_{k(l_1)}\otimes\cdots\otimes \mrx_{k(l_2)} \big)
	\oplus (\mrx_{k(l_1-1)} \otimes \mrx_{k(l_1)}\otimes\cdots\otimes \mrx_{k(l_2)}).
	\end{align*}
	As $b=P_{k(l_1-1)} bP_{k(l_1-1)}$, where $P_{k(l_1-1)}$ is the projection onto $\B\oplus \mrx_{k(l_1-1)}$.
	We have
	\[
	ba_2 (A_{l_1}\cdots A_m) 1_\B =0, 
	\]
	which implies that  $\E_{\LL(\X)}(A_1\cdots A_m)=0$ as well.
	This finishes the proof.
\end{proof}


\begin{defprop}\normalfont \label{defprop}
	Proposition \ref{Boolean Moments}, Lemma \ref{zero product} and Proposition \ref{Free-boolean moments} provide us an algorithm for computing mixed moments of free-Boolean independent pairs of random variables 
	and a canonical way to simplify an arbitrary element as follows.
	Denote by the algebra $\A$ generated by $\bigcup_{i\in \I}\{C_i\cup D_i\}$,
	where $\{C_i, D_i\}_{i\in \I}$ is a family of free-Boolean pair of $B$-faces
	in the $B$-valued probability space $(\LL(X), \E_{\LL(\X)})$.
	That is, $C_i\subset \A_{i,\f}, D_i\subset \A_{i,\bb}$ 
	are subalgebras. 
	Then
	\[
	\A = \text{span}\left\{  A_1\cdots A_m\ \middle\vert \begin{array}{l}
	\text{each $A_i$ is a simple product of element from $\A_{\omega(i)}$, }  \\
	\text{and}\quad \omega(1)\neq\omega(2)\neq\cdots\neq\omega(m)
	\end{array}\right\}.
	\]
	Let $X= A_1\cdots A_m$, where each $A_i$ is a simple product from $\A_{\omega(i)}$ and
	$\omega(1)\neq\omega(2)\neq\cdots\neq\omega(m)$. Whenever some $A_k$ ($1\leq k\leq m$)
	is a simple product of element from $\A_{\omega(k),\f}$, we replace it by
	\[
	A_k=(A_k-\E(A_k)1_A)+\E(A_k)1_A.
	\]
   Set $S_i=C_i\cup D_i$ ($i\in \I$). In viewing Lemma \ref{zero product}, Proposition \ref{Boolean product I} and the fact that
	$\A_{i, \f}$ is an algebra, the operator $A_1\cdots A_m$ can be expressed as the sum
	of following types of products:
	\begin{enumerate}
		\item $Z_{(0)}=b 1_\A$, where $b\in \B$. 
	
		\item $Z_{f}=F_1\cdots F_k$, where $k\in \mathbb{N}$, each $F_i\in C_{\omega(i)}\subset\A_{\omega(i),\f}$, $\E(F_i)$ and $\omega(1)\neq\cdots\neq\omega(k)$.
	
		\item $Z_{b}=B_1\cdots B_k$, where $k\in \mathbb{N}$, each 

		each $B_i$ is a Boolean product of elements from $S_{\omega(i)}$
	
		and $\omega(1)\neq\cdots\neq\omega(k)$.
	
		\item $Z_{fb}=F_1\cdots F_{k_1}B_1\cdots B_{k_2}$, where
		$k_1,k_2\in\mathbb{N}$, each $F_i\in C_{\omega(i)}\subset\A_{\omega(i),\f}$ 
		such that $\E(F_i)=0$ for $1\leq i\leq k_1$,
		each
		$B_j$ is a Boolean product of elements from  $S_{\omega(k_1+j)}$ 
		for $1\leq j\leq k_2$,
		and $\omega(1)\neq\cdots\neq \omega(k_1+k_2)$.
	
		\item $Z_{bf}=B_1\cdots B_{k_1}F_1\cdots F_{k_2}$, where 
		$k_1, k_2\in \mathbb{N}$, each $F_i\in C_{\omega(i)}\subset\A_{\omega(i),\f}$ 
		such that $\E(F_i)=0$ for $1\leq i\leq k_2$,
	  each $B_j$ is a Boolean product of elements from $S_{\omega(k_1+j)}$
		for $1\leq j\leq k_1$,
		and $\omega(1)\neq\cdots\neq \omega(k_1+k_2)$.
	
		\item $Z_{fbf}=F_1\cdots F_{k_1}B_1\cdots B_{k_2}F_{k_1+1}\cdots F_{k_1+k_3}$, 
		where $k_1, k_2, k_3\in\mathbb{N}$, each $F_i\in C_{\omega(i)}\subset\A_{\omega(i),\f}$ 
		for $1\leq i\leq k_1$, each 
		$F_{k_1+j}\in C_{\omega(k_1+k_2+j)}\subset\A_{\omega(k_1+k_2+j),\f}$ for $1\leq j\leq k_3$ such that $\E(F_i)=0$
		for $1\leq i \leq k_1+k_3$,
		 $B_j$ is a Boolean product of elements from $S_{\omega(k_1+j)}$ 
		for $1\leq j\leq k_1$,
		and $\omega(1)\neq \cdots\neq \omega(k_1+k_2+k_3)$. 
	\end{enumerate}
	Furthermore, $\E(Z_f)=0$ by the definition of free independence,
	$\E(Z_{bf})=\E(Z_{fb})=\E(Z_{fbf})=0$ by Proposition \ref{Free-boolean moments}.
\end{defprop}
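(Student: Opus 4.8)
The plan is to treat the statement as two separate claims: the structural \emph{decomposition} of an arbitrary alternating product into the six normal forms $Z_{(0)}, Z_f, Z_b, Z_{fb}, Z_{bf}, Z_{fbf}$ (the ``algorithm''), and the \emph{vanishing} $\E(Z_f)=\E(Z_{fb})=\E(Z_{bf})=\E(Z_{fbf})=0$. I would establish the decomposition first and then read off the vanishing from the earlier results.

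For the decomposition I would first group consecutive generators sharing an index, so that every element of $\A$ is a linear combination of alternating products $A_1\cdots A_m$ with each $A_i$ a simple product from $\A_{\omega(i)}$ and $\omega(i)\neq\omega(i+1)$. Next I would center every free-type factor by writing $A_k=(A_k-\E(A_k)1_{\A})+\E(A_k)1_{\A}$ and expanding multilinearly. Each resulting term in which some free factor is replaced by its scalar $\E(A_k)\in\B$ is, after the scalar is absorbed into a neighbour, a strictly shorter alternating product: two neighbours of equal index genuinely merge into a single simple product, free--free inside the algebra $\A_{i,\f}$, and Boolean--Boolean or free--Boolean by Proposition \ref{Boolean product I} together with Proposition \ref{simple Boolean} and the identity $\A_{i,\bb}=P_i\A_{i,\f}P_i$. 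These terms therefore fall under an induction on $m$, and there remains the single term in which every free factor is centered. The crucial observation for this term is that any maximal block of centered free factors lying strictly between two Boolean factors forces the whole product to vanish: this is exactly Lemma \ref{zero product} (its single-factor case being an immediate specialization of the same nesting estimate). Consequently a nonzero fully centered product can have free blocks only at its two ends, and since free and Boolean blocks alternate this permits at most one Boolean block; the admissible block patterns are thus precisely $\emptyset, f, b, fb, bf, fbf$, i.e. the six listed types.

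The vanishing statements then follow immediately. For $Z_f=F_1\cdots F_k$ I would invoke free independence with amalgamation of the left faces $\{\A_{i,\f}\}$: an alternating product of centered free elements has expectation zero (and $\E(F_1)=0$ when $k=1$). For $Z_{fb}$, $Z_{bf}$ and $Z_{fbf}$ I would apply Proposition \ref{Free-boolean moments} to a boundary free block, taking $[l_1,l_2]=[1,k_1]$ for $Z_{fb}$ and $Z_{fbf}$ and $[l_1,l_2]=[k_1+1,m]$ for $Z_{bf}$; in each case the block consists of centered (hence expectation-zero) non-Boolean factors flanked by a Boolean factor or by the boundary, and the indices alternate, so the hypotheses of Proposition \ref{Free-boolean moments} are met and the expectation vanishes.

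I expect the main obstacle to be the decomposition rather than the vanishing. The delicate points are the bookkeeping in the scalar-absorption induction (ensuring that neighbours of equal index merge into a single simple product of the correct free or Boolean type) and the recognition that an interior free block annihilates the product, which is what ultimately forbids a second Boolean block and pins down the normal forms. Two minor technical extensions are needed: covering the single-factor free block, where the cited results are phrased with $l_1<l_2$ but the nesting argument is unchanged for $l_1=l_2$, and carrying the non-unital faces along through $S_i=C_i\cup D_i$, which affects only which products count as Boolean and not the combinatorial skeleton above.
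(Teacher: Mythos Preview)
Your proposal is correct and follows essentially the same approach as the paper: the paper does not give a separate proof but embeds the justification in the statement itself, citing Lemma~\ref{zero product} (interior centered free blocks vanish), Proposition~\ref{Boolean product I} (Boolean products absorb), and the algebra property of $\A_{i,\f}$ for the decomposition, and then free independence and Proposition~\ref{Free-boolean moments} for the vanishing---exactly the ingredients you use. Your write-up supplies the induction-on-length bookkeeping and correctly flags the $l_1=l_2$ edge case in Lemma~\ref{zero product}, which the paper glosses over.
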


\begin{corollary}\label{cor:6.8}\normalfont
	Given an operator $Z_{fb}$
	and an operator $Z_{bf}$ of the form in Definition-Proposition \ref{defprop} $(4)$
	and $(5)$ respectively, for any $A, B\in \A$, we have 
	\[
	\E(Z_{fb}A)=\E(BZ_{bf})=0.
	\]
\end{corollary}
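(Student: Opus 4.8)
The plan is to reduce both identities to the defining formula $\E_{\LL(\X)}(X)=p\big(X(1_\B\oplus 0)\big)$ and to track how $Z_{fb}$ and $Z_{bf}$ act on the reduced free product $\X=\B\oplus\mrx$. Writing $Z_{fb}=F_1\cdots F_{k_1}B_1\cdots B_{k_2}$ with each $F_i=\lambda_{\omega(i)}(T_i)\in\A_{\omega(i),\f}$ satisfying $\E(F_i)=0$ and each $B_j$ a Boolean product of index $\omega(k_1+j)$, and $Z_{bf}=B_1\cdots B_{k_1}F_1\cdots F_{k_2}$ similarly, I would first isolate the two range statements that do all the work: \emph{(a)} $Z_{fb}(\X)\subset\mrx$, that is, $Z_{fb}$ maps every vector off the vacuum; and \emph{(b)} $Z_{bf}(1_\B)=0$. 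Granting these, the corollary is immediate: for any $A\in\A$ one has $\E(Z_{fb}A)=p\big(Z_{fb}(A(1_\B))\big)=0$ by \emph{(a)}, since $A(1_\B)\in\X$ and $p$ vanishes on $\mrx$; and $\E(BZ_{bf})=p\big(B\,(Z_{bf}(1_\B))\big)=p(B\cdot 0)=0$ by \emph{(b)}.

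To prove \emph{(a)}, I would apply Lemma \ref{small space} to the leftmost Boolean factor $B_1\in\A_{\omega(k_1+1)}$, which forces $B_1(\X)\subset\B\oplus\mrx_{\omega(k_1+1)}$, hence $B_1\cdots B_{k_2}(\X)\subset\B\oplus\mrx_{\omega(k_1+1)}$ irrespective of the inner factors. It then remains to check that the centered free elements $F_1,\dots,F_{k_1}$, whose indices satisfy $\omega(1)\neq\cdots\neq\omega(k_1)\neq\omega(k_1+1)$, send $\B\oplus\mrx_{\omega(k_1+1)}$ into $\mrx$. This is exactly the creation-operator computation already carried out in the proof of Lemma \ref{zero product}: since $\E_{\LL(\X_{\omega(i)})}(T_i)=0$, applying $F_{k_1}$ produces a vector in $\mrx_{\omega(k_1)}\oplus\big(\mrx_{\omega(k_1)}\otimes_\B\mrx_{\omega(k_1+1)}\big)$, and each further application of a centered $\lambda$-operator with a strictly different leading index appends a new tensor leg without ever creating a $\B$-component; an induction on $k_1$ finishes \emph{(a)}. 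For \emph{(b)}, the same computation applied to the vacuum gives $F_1\cdots F_{k_2}(1_\B)\in\mrx_{\omega(k_1+1)}\otimes_\B\cdots\otimes_\B\mrx_{\omega(k_1+k_2)}$, a tensor of length $k_2\ge 1$ with leading index $\omega(k_1+1)$. Because the adjacent Boolean factor $B_{k_1}$ equals $P_{\omega(k_1)}(\,\cdot\,)P_{\omega(k_1)}$ and $\omega(k_1)\neq\omega(k_1+1)$, the projection $P_{\omega(k_1)}$ annihilates this summand, so $B_{k_1}\big(F_1\cdots F_{k_2}(1_\B)\big)=0$ and therefore $Z_{bf}(1_\B)=0$.

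The only genuine work is the bookkeeping in the creation computation underlying \emph{(a)} and \emph{(b)}, which I expect to be the main obstacle; however, it is a verbatim repetition of the tensor-leg analysis already established in the proofs of Lemma \ref{zero product} and Proposition \ref{Free-boolean moments}, so it can be invoked rather than redone. The asymmetry between the two claims — a range statement for $Z_{fb}$ versus a vacuum-vanishing statement for $Z_{bf}$ — simply reflects that $\E_{\LL(\X)}$ evaluates against the vacuum only on the right, so an arbitrary $A$ on the right of $Z_{fb}$ must be absorbed by controlling the full range, while an arbitrary $B$ on the left of $Z_{bf}$ is harmless once $Z_{bf}(1_\B)=0$.
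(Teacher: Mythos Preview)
Your argument is correct, but it is organized differently from the paper's. The paper proceeds more abstractly: it uses linearity to assume $A$ is an alternating product of simple products, observes that after absorbing the leading factors of $A$ into $B_{k_2}$ (if they share an index) the factor immediately following $F_{k_1}$ is still a Boolean product, and then invokes Proposition~\ref{Free-boolean moments} with $l_1=1$, $l_2=k_1$ as a black box; the $Z_{bf}$ case is handled symmetrically. By contrast, you bypass this reduction entirely by proving two sharper operator-theoretic facts in the concrete realization on $\X$: the range inclusion $Z_{fb}(\X)\subset\mrx$ and the vacuum-vanishing $Z_{bf}(1_\B)=0$, both obtained by rerunning the tensor-leg computation already contained in the proofs of Lemma~\ref{zero product} and Proposition~\ref{Free-boolean moments} (specifically equation~(\ref{eq:009}) and the surrounding analysis). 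Your route avoids the concatenation bookkeeping needed to put $Z_{fb}A$ into the exact form required by Proposition~\ref{Free-boolean moments}, at the cost of appealing to the \emph{proofs} of those results rather than their statements. Both approaches ultimately rest on the same underlying computation; yours simply extracts a stronger intermediate conclusion (a range statement valid on all of $\X$, not just a moment identity) before specializing.
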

\begin{proof}
	By linearity, it is enough to consider the case when $A$ is the product of simple products.
	Let $Z_{fb}=F_1\cdots F_{k_1}B_1\cdots B_{k_2}$ by definition. The Boolean product
	$B_{k_2}$ is concatenated with some factors in $A$ to be a Boolean product.
	It then follows from Proposition \ref{Free-boolean moments} that $\E(Z_{fb}A)=0$. 
	The other case can be proved in the same way. 
\end{proof}

\begin{corollary}\label{cor:6.9}\normalfont
	Given an operator $Z_{fbf}$ of the form in Definition-Proposition \ref{defprop} $(6)$,
	for any $A, B\in \A$, we have $\E(Z_{fbf}A)=\E(BZ_{fbf})=0$.
\end{corollary}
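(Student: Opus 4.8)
The plan is to avoid re-running the tensor-space computation of Proposition \ref{Free-boolean moments} from scratch and instead reduce the two identities directly to the already-established Corollary \ref{cor:6.8}, by splitting $Z_{fbf}$ at the junction where its Boolean middle block meets one of its free outer blocks. Write $Z_{fbf}=F_1\cdots F_{k_1}B_1\cdots B_{k_2}F_{k_1+1}\cdots F_{k_1+k_3}$ as in Definition-Proposition \ref{defprop}$(6)$. The key observation is that the prefix $F_1\cdots F_{k_1}B_1\cdots B_{k_2}$ is exactly an operator of the form $Z_{fb}$ from $(4)$, while the suffix $B_1\cdots B_{k_2}F_{k_1+1}\cdots F_{k_1+k_3}$ is exactly an operator of the form $Z_{bf}$ from $(5)$; moreover the outer free blocks $F_1\cdots F_{k_1}$ and $F_{k_1+1}\cdots F_{k_1+k_3}$, being products of elements of the faces $C_{\omega(\cdot)}\subset\A$, themselves lie in the algebra $\A$.

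For the first identity I would set $Z_{fb}=F_1\cdots F_{k_1}B_1\cdots B_{k_2}$ and $W=F_{k_1+1}\cdots F_{k_1+k_3}\in\A$, so that for any $A\in\A$ one has $Z_{fbf}A=Z_{fb}(WA)$ with $WA\in\A$ since $\A$ is an algebra; Corollary \ref{cor:6.8} then gives $\E(Z_{fbf}A)=\E(Z_{fb}(WA))=0$ immediately. For the second identity I would set $W'=F_1\cdots F_{k_1}\in\A$ and $Z_{bf}=B_1\cdots B_{k_2}F_{k_1+1}\cdots F_{k_1+k_3}$, so that $BZ_{fbf}=(BW')Z_{bf}$ with $BW'\in\A$, and the second half of Corollary \ref{cor:6.8} yields $\E(BZ_{fbf})=\E((BW')Z_{bf})=0$.

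The one point that warrants explicit checking --- and the only place any real work hides --- is confirming that the truncations $Z_{fb}$ and $Z_{bf}$ genuinely meet the defining requirements of forms $(4)$ and $(5)$. For $Z_{fb}$ this means the centering $\E(F_i)=0$ for $1\le i\le k_1$ (inherited from the hypothesis $\E(F_i)=0$ for $1\le i\le k_1+k_3$ on $Z_{fbf}$), the fact that each $B_j$ is still a Boolean product from the appropriate $S_{\omega(\cdot)}$, and the alternation $\omega(1)\neq\cdots\neq\omega(k_1+k_2)$, which is simply the restriction of the full alternation of $Z_{fbf}$; the bookkeeping for $Z_{bf}$ is entirely analogous. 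Since $k_1,k_2,k_3\ge 1$ by hypothesis, neither truncation is degenerate, so the factorizations are legitimate and the result follows at once from Corollary \ref{cor:6.8}. I do not anticipate any genuine obstacle beyond this routine verification.
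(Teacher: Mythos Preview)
Your argument is correct and, in fact, cleaner than the paper's. The paper does not reduce to Corollary~\ref{cor:6.8}; instead it applies the simplification procedure of Definition-Proposition~\ref{defprop} together with Lemma~\ref{zero product} directly to $Z_{fbf}A$ (and to $BZ_{fbf}$), arguing that after centering the free factors in $A$ and killing any terms where the interior centered free block $F_{k_1+1}\cdots F_{k_1+k_3}$ becomes sandwiched between two Boolean products, only canonical forms with vanishing expectation survive. Your route bypasses this decomposition entirely: by peeling off the trailing free tail $W=F_{k_1+1}\cdots F_{k_1+k_3}$ (respectively the leading free head $W'=F_1\cdots F_{k_1}$) and absorbing it into $A$ (respectively $B$), you recognise the remaining factor as a genuine $Z_{fb}$ (respectively $Z_{bf}$) and invoke Corollary~\ref{cor:6.8} once. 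The only checks needed---that the truncated words satisfy the centering, Boolean-product, and index-alternation requirements of types $(4)$ and $(5)$, and that $k_1,k_2,k_3\geq 1$ so neither truncation is degenerate---are exactly the ones you flag, and they go through. What your approach buys is brevity and a transparent logical dependence on Corollary~\ref{cor:6.8}; what the paper's approach buys is a self-contained illustration of how the canonical decomposition of Definition-Proposition~\ref{defprop} works on a concrete product, which is useful pedagogy for the positivity argument in Section~7.
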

\begin{proof}
	It is enough to consider the case when $A$ and $B$ are product 
	of simple products. Noticing Proposition \ref{zero product} and applying the simplification method
	described in Definition-Proposition \ref{defprop},
	it is easy to see that $Z_{fbf}A$ can be written
	as the summation of element of the types $(3)$ and $(5)$, whose
	expectations are zero. Hence $\E(Z_{fbf}A)=0$.
	Similary, $BZ_{fbf}$ can be written
	as the summation of elemtns of the types $(4)$ and $(5)$. 
	Hence $\E(BZ_{fbf})=0$.
\end{proof}

\begin{lemma}\label{lemma:6.10}\normalfont
	Let $B_1\in \A_i$ and $B_2\in \A_j$ be two Boolean products and $i\neq j$.
	Then for any $A\in \A$, we have
	\[
	\E(B_1B_2A)=\E(B_1)\E(B_2A).
	\]
\end{lemma}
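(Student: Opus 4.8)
The plan is to prove Lemma~\ref{lemma:6.10} by reducing the left-hand side to a situation where Lemma~\ref{BFproduct} applies directly. First I would invoke Lemma~\ref{small space}: since $B_1\in\A_i$ is a Boolean product, its range lies in $\X_i=\B\oplus\mrx_i$. This is the structural fact that makes the factorization possible, because it tells us that $B_1$ acts like a single ``letter'' of colour $i$ when placed at the front of a word.

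The heart of the argument is to apply Lemma~\ref{BFproduct} to the product $B_1(B_2A)$. The subtlety is that $B_2A$ need not be a simple product of elements from a single $\A_j$, so I cannot cite Lemma~\ref{BFproduct} verbatim. The plan is therefore to first reduce $B_2A$ to a manageable form. By linearity I may assume $A$ is a product of simple products, and then I would use the simplification algorithm of Definition-Proposition~\ref{defprop} to write $B_2A$ as a sum of terms of the types $(1)$--$(6)$ listed there. The key observation is that $B_2\in\A_j$ is a Boolean product with $j\neq i$, so in each resulting term the leading factor carries colour $j\neq i$; consequently when I prepend $B_1$ (colour $i$) the alternating-colour condition $k(1)\neq k(2)$ needed by Lemma~\ref{BFproduct} is satisfied.

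Concretely, for each term $T$ appearing in the expansion of $B_2A$, the product $B_1T$ is of the form (Boolean product of colour $i$)$\cdot$(simple product of colour $j$), and since $B_1$ is Boolean, Lemma~\ref{BFproduct} gives
\[
B_1 T\, 1_\B = B_1\,\E_{\LL(\X)}(T).
\]
Applying $p$ and the bimodule property of $\E_{\LL(\X)}$, this yields $\E(B_1 T)=\E(B_1\,\E_{\LL(\X)}(T))=\E(B_1)\E(T)$. Summing over the terms $T$ in the expansion of $B_2A$ and using linearity of $\E$ recovers
\[
\E(B_1 B_2 A)=\E(B_1)\,\E(B_2 A),
\]
which is the desired identity.

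The main obstacle I anticipate is the bookkeeping in the middle step: verifying that after the simplification of Definition-Proposition~\ref{defprop} every surviving term really does begin with a factor of colour $j$, so that the hypothesis $i\neq j$ of Lemma~\ref{BFproduct} is genuinely available. One must be careful that the leftmost letter of $B_2$ is not cancelled or absorbed during the reduction; since $B_2$ is itself a Boolean product of colour $j$ it remains intact as a left factor, and the alternating structure produced by the algorithm never merges it with $B_1$ because $i\neq j$. Once this is checked, the rest is the routine application of Lemma~\ref{BFproduct} together with the bimodule property of the expectation, so I expect the proof to be short.
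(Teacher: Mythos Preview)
Your approach has a genuine gap, and it stems from applying Lemma~\ref{small space} to the wrong operator. You invoke it for $B_1$, but the paper applies it to $B_2$: since $B_2\in\A_j$ is Boolean, Lemma~\ref{small space} gives $B_2(\X)\subset\X_j$, and hence $B_2A(1_\B)\in\B\oplus\mrx_j$ \emph{without any decomposition of $A$ whatsoever}. Writing $B_2A(1_\B)=\E(B_2A)+\eta$ with $\eta\in\mrx_j$, one then uses Proposition~\ref{Boolean product I} to write $B_1=P_iaP_i$; since $i\neq j$ the projection $P_i$ kills $\mrx_j$, so $B_1\eta=0$ and $\E(B_1B_2A)=\E(B_1\,\E(B_2A))=\E(B_1)\E(B_2A)$. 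That is the entire proof in the paper.

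Your detour through Definition--Proposition~\ref{defprop} is not just longer, it contains a mis-citation: Lemma~\ref{BFproduct} requires $A_2$ to be a simple product of elements from a \emph{single} $S_j$, i.e.\ a one-colour word. The terms $T$ produced by the algorithm of Definition--Proposition~\ref{defprop} are multi-colour alternating words (types $(2)$--$(6)$), so the lemma does not apply to them as stated. Your claim that ``$B_1T$ is of the form (Boolean product of colour $i$)$\cdot$(simple product of colour $j$)'' is therefore false. The argument can be rescued---each such $T$ does begin with a Boolean factor of colour $j$, so $T(1_\B)\in\X_j$ by Lemma~\ref{small space} again---but once you see that, you realise the decomposition was never needed: the same containment holds for $B_2A$ itself, which is exactly the paper's one-line observation.
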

\begin{proof}
	Write
	$B_2 A (1_\B)= \E(B_2A)+(B_2A 1_\B-\E(B_2A)) \in \B\oplus \mrx_j$. 
	We can express the Boolean product $B_1$ as $B_1=P_i a P_i$ following
	Proposition \ref{Boolean product I}, and therefore $B_1 (\mrx_j)=0$,
	which shows that
	\[
	\E(B_1B_2A)=\E(B_1 (\E(B_2A)))=\E(B_1)\E(B_2A).
	\]
\end{proof}

The mixed moments $\E(A_1\cdots A_n)$ can be expressed as a universal polynomial of moments 
of elements in individual algebras $\A_i$. 
Thus we have the following  equivalent definition for free-Boolean independence under Moments conditions.
\begin{proposition}\normalfont
	 Let $\{(C_i, D_i)\}_{i\in \I}$
	  be a family of pairs of algebras in a $B$-valued  probability space $(\A, \E)$. 
	Set $S_i=C_i\cup D_i$. 
	The family  $\{(C_i, D_i)\}_{i\in \I}$
	is free-Boolean
	independent if and only if 
	\begin{enumerate}
		\item whenever $B_1,\cdots, B_m$ are operators such 
		that:
		\begin{itemize}
			\item for each $1\leq k\leq m$, $B_k$ is a simple product of elements
			from $S_{\omega(k)}$, at least one of them is from $D_{\omega(k)}$;
			\item $\omega(1)\neq\cdots \neq \omega(m)$.
		\end{itemize}	
	then
		$$\E(B_1\cdots B_m)=\E(B_1)\cdots \E(B_m).$$
		\item whenever 
		$A_1,\cdots, A_m$ are operators such that:
		\begin{itemize}
			\item For each $1\leq k\leq m$, $A_k$ is a product of elements from $S_{\omega(k)}$.
			\item There exist $1\leq l_1< l_2\leq m$ such that 
			$A_{k}$ is a product of elements from $C_{\omega(k)}$ for all $l_1\leq k\leq k_2$. 
			\item Either $l_1=1$ or $A_{l_1-1}$
			is a product of element from $S_{\omega(l_1-1)}$, at least one of them is 
			in $D_{\omega(l_1-1)}$.
			\item Either $l_2=m$ or $A_{l_1+1}$
			is a product of element from $S_{\omega(l_1+1)}$, at least one of them is 
			in $D_{\omega(l_2+1)}$. 
			\item  $\omega(1)\neq \cdots\neq \omega(m)$.
			\item $\E(A_{l_1})=\E(A_{l_1+1})=\cdots=\E(A_{l_2})=0$.
		\end{itemize}
		Then, we have $\E(A_1\cdots A_m)=0$.
	\end{enumerate}
\end{proposition}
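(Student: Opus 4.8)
The plan is to establish both implications by showing that conditions (1) and (2), together with the algebra structure and the $\B$-$\B$-bimodule property of $\E$, force every mixed moment to be a fixed universal polynomial in the moments of the individual algebras $\A_i$. The forward implication is then immediate: if $\{(C_i,D_i)\}_{i\in\I}$ is free-Boolean independent, the discussion opening this section lets us identify $C_i$ with a subalgebra of $\A_{i,\f}$ and $D_i$ with a subalgebra of $\A_{i,\bb}$ inside $(\LL(\X),\E_{\LL(\X)})$ following Definition \ref{free-BooleanDef}. A simple product from $S_{\omega(k)}$ having a factor from $D_{\omega(k)}$ is a Boolean product, so it lies in $\A_{\omega(k),\bb}$ by Proposition \ref{Boolean product I}; condition (1) is then exactly Corollary \ref{Boolean Moments}, and condition (2) is exactly Proposition \ref{Free-boolean moments}.

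For the converse I would assume (1) and (2) and argue that all mixed moments are determined by the single-algebra moments. Let $X=X_1\cdots X_m$ with each $X_k$ a simple product of elements from $S_{\omega(k)}$ and $\omega(1)\neq\cdots\neq\omega(m)$. First I would run the simplification of Definition-Proposition \ref{defprop} at the level of moments: whenever $X_k$ is a (non-Boolean) product of elements from $C_{\omega(k)}$, I center it as $X_k=(X_k-\E(X_k)1)+\E(X_k)1$ and absorb the scalar $\E(X_k)\in\B$ into a neighbouring factor via the bimodule property of $\E$, possibly merging two factors of equal index and shortening the alternating word. Expanding multilinearly, $\E(X)$ becomes a sum over words in which every free factor is centered, and after regrouping each such word is of one of the types $Z_{(0)},Z_f,Z_b,Z_{fb},Z_{bf},Z_{fbf}$ of Definition-Proposition \ref{defprop}.

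I would then induct on $m$. The moment of a purely Boolean word $Z_b$ factors into single-algebra moments by condition (1); a word containing a centered free block flanked by Boolean blocks or word boundaries has vanishing moment by condition (2); and two consecutive Boolean factors of distinct indices split off a single-algebra moment in the style of Lemma \ref{lemma:6.10}, reducing to a strictly shorter word. The base case $m=1$ is an individual moment. This shows (1) and (2) determine every mixed moment from the single-algebra moments. Since the reduced free product built from individual bimodule representations of the $\A_i$ is free-Boolean independent by the construction of Section 2, satisfies (1) and (2) by the forward direction, and shares the single-algebra moments of our family, the two joint distributions coincide and $\{(C_i,D_i)\}_{i\in\I}$ is free-Boolean independent.

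The main obstacle is that, abstractly, one has only the moment-level conditions (1) and (2) and not the operator-level vanishing of Lemma \ref{zero product}; hence the auxiliary facts used for the canonical model — Corollaries \ref{cor:6.8} and \ref{cor:6.9} and Lemma \ref{lemma:6.10} — must be re-established purely from (1), (2), the algebra structure, and the bimodule property, rather than from the geometry of $\X$. The delicate bookkeeping will be verifying that absorbing the scalars $\E(X_k)$ after centering never destroys the flanking hypotheses required by condition (2) and that the recursion terminates; tracking how the merging of equal-index factors preserves the alternating structure is exactly where the care is needed, while the remaining multilinear expansions are routine.
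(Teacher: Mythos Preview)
Your proposal is correct and follows the same route the paper takes, though the paper is much terser: it does not give an explicit proof of this proposition at all, offering only the single sentence ``the mixed moments $\E(A_1\cdots A_n)$ can be expressed as a universal polynomial of moments of elements in individual algebras $\A_i$'' as justification. Your forward direction is exactly the paper's implicit argument (Corollary \ref{Boolean Moments} gives (1), Proposition \ref{Free-boolean moments} gives (2)), and your converse --- showing that (1) and (2) pin down every mixed moment, then comparing with the canonical model --- is precisely the content the paper is gesturing at.

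One simplification: you do not actually need to re-establish Lemma \ref{lemma:6.10}, Corollary \ref{cor:6.8}, or Corollary \ref{cor:6.9} abstractly. After the multilinear centering of every non-Boolean factor and the absorption of the resulting $\B$-scalars into neighbours (which only uses the bimodule property and shortens the word, feeding the induction), each surviving full-length word has every non-Boolean factor centered. Such a word is either entirely Boolean, and (1) alone factors its expectation completely, or contains a maximal run of centered free factors flanked by Boolean factors or word boundaries, and (2) alone kills it. No intermediate ``split off one Boolean factor'' step is required. The obstacle you flag --- that Lemma \ref{zero product} is operator-level and unavailable abstractly --- is real, but its only role in Definition-Proposition \ref{defprop} was to prune the list of word types; for the moment computation you need here, condition (2) does all the pruning at the expectation level. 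One small point to make explicit: condition (2) is stated for $A_k$ that are \emph{products} of elements from $C_{\omega(k)}$, while your centered factors $X_k-\E(X_k)1$ are differences; you should note that by linearity of $\E$ the conclusion of (2) extends immediately to arbitrary elements of $C_{\omega(k)}$ with vanishing expectation.
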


\section{Positivity of the amalgamated free-Boolean product}
In this section, we deal with $B$-functionals with positivity property.  For the notion of positivity, we need a $*$-structure on our algebras.  We assume the algebra $B$ has a nice positivity structure, i.e. we demand it to be a unital $C^*$-algebra. For $*$-algebra $\A$, no such restriction is required.

\begin{definition} \normalfont
Let $\A$ be a unital $*$-algebra, element $a\in \A$ is said to be positive if there exists a $b\in \A$ such that $a=bb^*$. A $B$-linear functional $\E$ is said to be positive if $\E(a)$ is positive for all positive element $a\in \A$. A $B$-linear functional $\E$ is said to be unital if $\E(1_\A)=1_B$.
\end{definition}

In the rest of this section, we always assume that $\A$ is a unital $*-$algebra and $\E$ is unital.
Let  $\{(C_i,D_i)\}_{i\in \I}$ ia a family of  $B$-faces  in a $B$-probability space $(\A, \E)$ , which generates $\A$. Suppose that the family $\{(C_i,D_i)\}_{i\in \I}$ is free-Boolean independent with amalgamation over $B$ and $C_i, D_i$ are $*$-subalgebras of $\A$ for all $i\in \I$. For each $i$, let $ A_i$ be the unital $*$-algebras generated by $C_i, D_i$. Let $\E_i$ be the restriction of $\E$ to  $\A_i$.  Then  $(\A_i,\E_i)$ is a $B$-valued probability space. We assume that  $E_i$ is unital and positive and unital for all $i$. 

For convenience, we also introduce the following definition. Then,  
results in Proposition-Definition \ref{defprop} hold
in this abstract framework. 
\begin{definition}\normalfont
	Given a set $S_i\subset C_i\cup D_i$
	and $a_1,\cdots, a_m\in S_i$
	their
	product $A=a_1\cdots a_m$ is called a \emph{simple product} 
	of elements from $S_i$.
	It is called a \emph{Boolean product} of elements from $S_i$ if $a_k\in S_i\cap D_i$
	for some $1\leq k\leq m$. 
\end{definition}

Recall that in the Section(moments-condition),  the algebra $\A$ is the linear span of simple products of type  $Z_0,Z_f, Z_b,Z_{bf},Z_{fs}$ and $Z_{fbf}$, as shown in Proposition-Definition \ref{defprop}. Given $Z\in \A$, then $Z$ can be written as
$$Z=Z_0+\sum\limits_{i_1}Z^{(i_1)}_f+\sum\limits_{i_2}Z^{(i_2)}_b+\sum\limits_{i_3}Z^{(i_3)}_{bf}+\sum\limits_{i_4}Z^{(i_4)}_{fb}+\sum\limits_{i_5}Z^{(i_5)}_{fbf}.$$
We will show that $\E[ZZ^*]$ is positive.
\begin{remark} \normalfont
	$(Z^{(i_3)}_{bf})^*$ is a $Z_{fb}$ type element and  $(Z^{(i_4)}_{fb})^*$ is a $Z_{bf}$ type element.
\end{remark}

We first note that, by Corollary \ref{cor:6.8} and Corollary \ref{cor:6.9},  we have that 
$$\E[ZZ^*]=\E[Z_1Z_1^*],$$
where $Z_1=Z_0+\sum\limits_{i_1}Z^{(i_1)}_f+\sum\limits_{i_2}Z^{(i_2)}_b+\sum\limits_{i_3}Z^{(i_3)}_{bf}$.

To simplify the notation, we introduce the following notations:
\begin{itemize}
			\item For operator of the form 
			$Z_{b}=B_1\cdots B_k$, where $k\in \mathbb{N}$, 
	each $B_i$ is a Boolean product of elemnts from $S_{\omega(i)}$
	and $\omega(1)\neq\cdots\neq\omega(k)$.
	Set 
	\[
	  \begin{array}{crl}
\Psi(Z_b)&=\E(B_1\cdots B_{k-1})B_k, \quad \text{when}\quad k\geq 2,\\
\Psi^*(Z_b)&=B_1\E(B_2\cdots B_{k1}), \quad \text{when}\quad k\geq 2,
	  \end{array}
	\]
	and $\Psi(Z_b)=\Psi^*(Z_b)=Z_b$ when $k=1$. 
	
		\item For operators of the form $Z_{bf}=B_1\cdots B_{k_1}F_1\cdots F_{k_2}$, where 
	$k_1, k_2\in \mathbb{N}$, each $F_i\in C_{\omega(i)}\subset\A_{\omega(i),\f}$ 
	such that $\E(F_i)=0$ for $1\leq i\leq k_2$,
	each $B_j$ is a Boolean product of elements from $S_{\omega(k_1+j)}$
	for $1\leq j\leq k_1$,
	and $\omega(1)\neq\cdots\neq \omega(k_1+k_2)$.
	Set
	\[
	   \Psi(Z_{bf})=\E(B_1\cdots B_{k_1-1})\big(B_{k_1}F_1\cdots F_{k_2}\big), 
	   	\quad \text{when}\quad k_1\geq 2,
	\]
	and $\Psi(Z_{bf})=Z_{bf}=B_1\big(F_1\cdots F_{k_2}\big)$ when $k_1=1$. 
	
	\item For operator of the form $Z_{fb}=F_1\cdots F_{k_1}B_1\cdots B_{k_2}$, where
	$k_1,k_2\in\mathbb{N}$, each $F_i\in C_{\omega(i)}\subset\A_{\omega(i),\f}$ 
	such that $\E(F_i)=0$ for $1\leq i\leq k_1$,
	each
	$B_j$ is a Boolean product of elemnts from  $S_{\omega(k_1+j)}$ 
	for $1\leq j\leq k_2$,
	and $\omega(1)\neq\cdots\neq \omega(k_1+k_2)$.
	Set 
	\[
	   \Psi^*(Z_{fb})=\big(F_1\cdots F_{k_1}B_1 \big)\E(B_2\cdots B_{k_2}),
	   \quad \text{when}\quad k_2\geq 2,
	\]
	and $\Psi^*(Z_fb)=Z_{fb}$ when $k_2=1$.

\end{itemize}

Note that $Z_{bf}^*$ is of the same type as $Z_{fb}$ following 
Proposition-Defintion \ref{defprop} $(4), (5)$.
It is easy to check that the following relation holds:
\[
   \Psi(Z_{bf})^*=\Psi^*(Z_{bf}^*).
\]

\begin{lemma} \label{lemma:7.5}\normalfont
	Let  $Z_{bf}=B_1\cdots B_{k_1}F_1\cdots F_{k_2}$, where each $F_i\in\A_{\omega(i),\f}$ 
	such that $\E(F_i)=0$ for $1\leq i\leq k_2$,
	each $B_j$ is a Boolean product from $\A_{\omega(k_1+j)}$ for $1\leq j\leq k_1$,
	and $\omega(1)\neq\cdots\neq \omega(k_1+k_2)$. Then,
	$\E[Z_{bf}Z']=\E[\Psi(Z_{bf})Z']$
	for all simple products $Z'$.
\end{lemma}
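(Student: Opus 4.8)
The plan is to prove the identity by peeling off the Boolean factors $B_1,\dots,B_{k_1-1}$ one at a time from the left, each peel being a single instance of Lemma \ref{lemma:6.10}. When $k_1=1$ there is nothing to prove, since $\Psi(Z_{bf})=Z_{bf}$ by definition; so I assume $k_1\geq 2$.

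First I would observe that at every stage the two leftmost surviving factors are Boolean products coming from \emph{distinct} algebras, because $\omega(1)\neq\omega(2)\neq\cdots\neq\omega(k_1)$ and $B_1,\dots,B_{k_1}$ are all Boolean products. Hence Lemma \ref{lemma:6.10} applies repeatedly: one application peels off $B_1$ to give $\E[B_1\cdots B_{k_1}F_1\cdots F_{k_2}Z']=\E(B_1)\,\E[B_2\cdots B_{k_1}F_1\cdots F_{k_2}Z']$, a second peels off $B_2$, and so on. After $k_1-1$ applications I arrive at $\E[B_1\cdots B_{k_1}F_1\cdots F_{k_2}Z']=\E(B_1)\cdots\E(B_{k_1-1})\,\E[B_{k_1}F_1\cdots F_{k_2}Z']$, where the very last peel pairs the Boolean product $B_{k_1-1}$ with the Boolean product $B_{k_1}$, exactly as the hypothesis of Lemma \ref{lemma:6.10} requires. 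Equivalently, this can be phrased as an induction on $k_1$: apply Lemma \ref{lemma:6.10} once to strip $B_1$, and then the inductive hypothesis to the $Z_{bf}$-type element $B_2\cdots B_{k_1}F_1\cdots F_{k_2}$, which carries one fewer Boolean factor.

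To finish, I would recombine the scalar factors. Since $B_1,\dots,B_{k_1-1}$ are Boolean products from distinct algebras, Corollary \ref{Boolean Moments} gives $\E(B_1)\cdots\E(B_{k_1-1})=\E(B_1\cdots B_{k_1-1})\in\B$. Using that $\E$ is a $\B$-$\B$-bimodule map, this scalar can be pulled back inside the expectation, so that $\E(B_1\cdots B_{k_1-1})\,\E[B_{k_1}F_1\cdots F_{k_2}Z']=\E[\E(B_1\cdots B_{k_1-1})B_{k_1}F_1\cdots F_{k_2}Z']=\E[\Psi(Z_{bf})Z']$ by the definition of $\Psi(Z_{bf})$, which is the desired equality.

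The argument involves no genuine difficulty beyond bookkeeping; the one point requiring care is checking, at each peeling step, that the two leading factors really are Boolean products from different algebras so that Lemma \ref{lemma:6.10} is applicable, and that the trailing factor $B_{k_1}$ is left untouched rather than consumed. Matching the final scalar $\E(B_1\cdots B_{k_1-1})$ with the definition of $\Psi(Z_{bf})$ via the bimodule property is then immediate.
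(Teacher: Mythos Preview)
Your proposal is correct and follows exactly the approach the paper intends: the paper's proof is the single line ``It follows by applying Lemma \ref{lemma:6.10} inductively,'' and your argument is precisely that induction spelled out, together with the recombination of the scalar factors via Corollary \ref{Boolean Moments} and the $\B$-bimodule property of $\E$. There is no substantive difference.
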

\begin{proof}
It follows by applying Lemma \ref{lemma:6.10} inductively. 
\end{proof}

\begin{lemma}\label{lemma:7.6}\normalfont
	 Let  $Z_{fb}=F_1\cdots F_{k_1}B_1\cdots B_{k_2}$, where each $F_i\in\A_{\omega(i),\f}$ 
	such that $\E(F_i)=0$ for $1\leq i\leq k_1$, $k_1\geq 0$, $k_2\geq 1$ 
	each $B_j$ is a Boolean product from $\A_{\omega(k_1+j)}$ for $k_1<j\leq k_1+k_2$,
	and $\omega(1)\leq \omega(k_1+k_2)$. 
	Then,
	$\E[Z'Z_{fb}]=\E[Z'\Psi^*(Z_{fb})]$
	for all simple products $Z'$. 
\end{lemma}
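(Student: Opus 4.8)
The plan is to deduce this statement from Lemma \ref{lemma:7.5} by passing to adjoints, exploiting the symmetry between the operators $\Psi$ and $\Psi^*$ recorded in the relation $\Psi(Z_{bf})^*=\Psi^*(Z_{bf}^*)$. The conceptual point is that $\Psi^*$ absorbs Boolean factors on the \emph{right} whereas $\Psi$ absorbs them on the \emph{left}, and the $*$-operation interchanges these two situations. This is exactly why one does not try to mirror Lemma \ref{lemma:6.10} directly: a naive ``right-handed'' absorption would place the absorbed expectation on the wrong side of $B$, while the adjoint trick produces the correct placement automatically.

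First I would record the structural facts that make the reduction legitimate. Since $C_i$ and $D_i$ are $*$-subalgebras and $\E$ is positive, $\E$ is self-adjoint, $\E(x^*)=\E(x)^*$; in particular $\E(F_i^*)=\E(F_i)^*=0$. Taking adjoints reverses the word $Z_{fb}=F_1\cdots F_{k_1}B_1\cdots B_{k_2}$ into
\[
Z_{fb}^*=B_{k_2}^*\cdots B_1^*F_{k_1}^*\cdots F_1^*,
\]
whose leading $k_2$ factors are Boolean products and whose trailing $k_1$ factors are centered free factors, with the index word still alternating. Hence $Z_{fb}^*$ is a $Z_{bf}$-type element, as noted in the Remark above, and $(Z')^*$ is again a simple product (the class of simple products being closed under $*$). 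Thus $Z_{fb}^*$ satisfies the hypotheses of Lemma \ref{lemma:7.5} with the roles of $k_1$ and $k_2$ exchanged.

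Next I would run the computation. Using self-adjointness of $\E$ and then Lemma \ref{lemma:7.5} applied to the $Z_{bf}$-type element $Z_{fb}^*$ against the simple product $(Z')^*$,
\[
\E[Z'Z_{fb}]^*=\E[(Z'Z_{fb})^*]=\E[Z_{fb}^*(Z')^*]=\E[\Psi(Z_{fb}^*)(Z')^*].
\]
I would then invoke the identity $\Psi(Z_{bf})^*=\Psi^*(Z_{bf}^*)$ with $Z_{bf}=Z_{fb}^*$, which after one adjoint reads $\Psi(Z_{fb}^*)=\Psi^*(Z_{fb})^*$, to obtain
\[
\E[\Psi(Z_{fb}^*)(Z')^*]=\E[\Psi^*(Z_{fb})^*(Z')^*]=\E[(Z'\Psi^*(Z_{fb}))^*]=\E[Z'\Psi^*(Z_{fb})]^*.
\]
Comparing the two displays gives $\E[Z'Z_{fb}]^*=\E[Z'\Psi^*(Z_{fb})]^*$, and taking adjoints once more yields the claim. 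The degenerate cases are immediate: when $k_2=1$ we have $\Psi^*(Z_{fb})=Z_{fb}$ and there is nothing to prove, and when $k_1=0$ the word $Z_{fb}^*$ is a pure Boolean product, for which the same inductive application of Lemma \ref{lemma:6.10} underlying Lemma \ref{lemma:7.5} still applies.

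The main obstacle I anticipate is bookkeeping rather than substance: I must verify that conjugation places $\E(B_2\cdots B_{k_2})^*=\E(B_{k_2}^*\cdots B_2^*)$ on the correct side so that $\Psi(Z_{fb}^*)$ really is $\Psi^*(Z_{fb})^*$, and that the reversed word meets every hypothesis of Lemma \ref{lemma:7.5} (count and placement of Boolean versus centered-free factors, the alternation $\omega(k_1+k_2)\neq\cdots\neq\omega(1)$, and vanishing of $\E$ on the free factors). The only genuinely non-formal input is the self-adjointness $\E(x^*)=\E(x)^*$, which I would justify from positivity of $\E$ by the standard polarization identity; this is precisely where the $C^*$-algebra assumption on $B$ is used.
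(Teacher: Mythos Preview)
Your proposal takes a genuinely different route from the paper, and as written it has a circularity gap.

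The paper proves this lemma by iterating Lemma~\ref{BFproduct} directly: in the concrete reduced-free-product realization one has $B_{k_2-1}B_{k_2}1_\B=B_{k_2-1}\E(B_{k_2})$, and repeating this peels off the rightmost Boolean factors until $B_1\cdots B_{k_2}1_\B=B_1\E(B_2\cdots B_{k_2})$; applying $Z'F_1\cdots F_{k_1}$ on the left and projecting to $\B$ gives the claim. No $*$-structure enters at any point.

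Your reduction to Lemma~\ref{lemma:7.5} via adjoints is neat, but you derive the key step $\E(x^*)=\E(x)^*$ from \emph{positivity of $\E$}. In Section~7 only the restrictions $\E_i=\E|_{\A_i}$ are assumed positive; global positivity of $\E$ is precisely the theorem the section is building toward, and the present lemma is one of its ingredients (it feeds into Lemma~\ref{lemma:6.7}). So invoking positivity of $\E$ here is circular. A repair is possible---one can establish $\E(x^*)=\E(x)^*$ independently from self-adjointness of the $\E_i$ (which does follow from their positivity) together with the universal mixed-moment formula~\eqref{recursive relation}, by checking that reversal-plus-conjugation is a symmetry of that formula---but this is nontrivial extra work not present in your proposal. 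The paper's direct route via Lemma~\ref{BFproduct} sidesteps the issue entirely, and also shows that the lemma holds without any $*$-hypotheses.
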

\begin{proof}
It follows by applying Lemma \ref{BFproduct} inductively. 
\end{proof}

\begin{lemma}\label{lemma:6.7}\normalfont
$\E[Z_1Z_1^*]=\E[Z_2Z_2^*]$, where
$Z_2=Z_0+\sum\limits_{i_1}Z^{(i_1)}_f+\Psi\big(\sum\limits_{i_2}Z^{(i_2)}_b+\sum\limits_{i_3}Z^{(i_3)}_{bf}\big)$.
\end{lemma}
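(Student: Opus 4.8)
The plan is to expand both inner products and match them term by term, reducing everything to the two replacement lemmas already proved. Write $U=Z_0+\sum_{i_1}Z^{(i_1)}_f$ and $W=\sum_{i_2}Z^{(i_2)}_b+\sum_{i_3}Z^{(i_3)}_{bf}$, so that $Z_1=U+W$ and $Z_2=U+\Psi(W)$, where $\Psi$ acts on each summand of $W$ and is extended by linearity. Expanding, the terms $\E[UU^*]$ agree on both sides, so it remains to establish the three identities $\E[WU^*]=\E[\Psi(W)U^*]$, its adjoint $\E[UW^*]=\E[U\,\Psi(W)^*]$, and $\E[WW^*]=\E[\Psi(W)\Psi(W)^*]$.

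First I would record a \emph{left-replacement} property: $\E[WZ']=\E[\Psi(W)Z']$ for every simple product $Z'$. By linearity it suffices to treat a single summand. For a $Z_{bf}$-summand this is precisely Lemma \ref{lemma:7.5}, while for a $Z_b$-summand $B_1\cdots B_k$ the same conclusion follows by applying Lemma \ref{lemma:6.10} inductively together with Corollary \ref{Boolean Moments}, exactly as in the proof of Lemma \ref{lemma:7.5}. Taking $Z'=U^*$ gives the first identity, and applying $*$ (using that $\E$ is $*$-preserving, $\E(a^*)=\E(a)^*$) yields the adjoint identity $\E[UW^*]=\E[U\,\Psi(W)^*]$.

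Next I would record the dual \emph{right-replacement} property: $\E[Z'W^*]=\E[Z'\,\Psi^*(W^*)]$ for every simple product $Z'$. Here $W^*=\sum Z_b^{\,*}+\sum Z_{bf}^{\,*}$ is a sum of $Z_b$- and $Z_{fb}$-type elements, and the claim follows from Lemma \ref{lemma:7.6} on the $Z_{fb}$-summands and the analogous statement for $Z_b$-summands, which in turn follows by applying Lemma \ref{BFproduct} inductively. To compute $\E[WW^*]$ I would first apply the left-replacement with $Z'=W^*$ to obtain $\E[WW^*]=\E[\Psi(W)W^*]$. Each summand of $\Psi(W)$ is again a simple product: $\Psi(Z_b)=\E(B_1\cdots B_{k-1})B_k$ and $\Psi(Z_{bf})=\E(B_1\cdots B_{k_1-1})(B_{k_1}F_1\cdots F_{k_2})$ are a $B$-scalar times a Boolean product, resp.\ a single-leading-Boolean $Z_{bf}$, and the scalar is absorbed into the first factor by the $B$-bimodule structure of the faces. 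Hence the right-replacement applies with these as $Z'$, giving $\E[\Psi(W)W^*]=\E[\Psi(W)\,\Psi^*(W^*)]$. Finally the relation $\Psi(Z_{bf})^*=\Psi^*(Z_{bf}^*)$ together with its $Z_b$-counterpart yields $\Psi^*(W^*)=\Psi(W)^*$, so $\E[WW^*]=\E[\Psi(W)\Psi(W)^*]$, completing the proof.

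The main obstacle is bookkeeping rather than any new idea: one must verify that $\Psi$ does not leave the class of simple products, so that Lemmas \ref{lemma:7.5} and \ref{lemma:7.6} remain applicable when $\Psi(W)$ is fed back in as the auxiliary factor $Z'$, and that the $Z_b$-type terms—not literally covered by those two lemmas—obey the same replacement rules. The adjoint compatibility $\Psi^*(W^*)=\Psi(W)^*$ relies on $\E$ being $*$-preserving, which is exactly where the standing positivity and unitality hypotheses on $\E$ are used.
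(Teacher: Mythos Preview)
Your proposal is correct and follows essentially the same route as the paper: apply the left-replacement (Lemma~\ref{lemma:7.5}) to swap $W$ for $\Psi(W)$ in the first factor, then the right-replacement (Lemma~\ref{lemma:7.6}) to swap $W^*$ for $\Psi^*(W^*)$ in the second factor, and conclude via $\Psi^*(W^*)=\Psi(W)^*$. The paper performs these two replacements on the full expression $\E[Z_1Z_1^*]$ in one pass rather than splitting into the cross-terms $UU^*,\,WU^*,\,UW^*,\,WW^*$ as you do, but the substance is identical; if anything, your write-up is more careful in flagging that the $Z_b$-summands are not literally covered by Lemmas~\ref{lemma:7.5} and~\ref{lemma:7.6} as stated and must be handled directly via Lemma~\ref{lemma:6.10} and Lemma~\ref{BFproduct}.
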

\begin{proof}
	Appy Lemma \ref{lemma:7.5} and Lemma \ref{lemma:7.6}, we have
$$\begin{array}{rcl}
&& \E[Z_1Z_1^*]\\

&=&\E\Big[ (Z_0+\sum\limits_{i_1}Z^{(i_1)}_f+\sum\limits_{i_2}Z^{(i_2)}_b+\sum\limits_{i_3}Z^{(i_3)}_{bf})
(Z^*_0+\sum\limits_{i_1}Z^{(i_1)}_f+\sum\limits_{i_2}Z^{(i_2)}_b+\sum\limits_{i_3}Z^{(i_3)}_{bf})^*\Big]\\

&=&\E\Big[ (Z_0+\sum\limits_{i_1}Z^{(i_1)}_f+\sum\limits_{i_2}Z^{(i_2)}_b+\sum\limits_{i_3}Z^{(i_3)}_{bf})
(Z^*_0+\sum\limits_{i_1}Z^{(i_1)*}_f+\sum\limits_{i_2}Z^{(i_2)*}_b+\sum\limits_{i_3}Z^{(i_3)*}_{bf})\Big]\\

&=&\E\Big[\big(Z_0+\sum\limits_{i_1}Z^{(i_1)}_f+\Psi(\sum\limits_{i_2}Z^{(i_2)}_b+
\sum\limits_{i_3}Z^{(i_3)}_{bf})\big)
\big(Z^*_0+\sum\limits_{i_1}Z^{(i_1)*}_f+(\sum\limits_{i_2}Z^{(i_2)*}_b+
\sum\limits_{i_3}Z^{(i_3)*}_{bf})\big)\Big]\\

&=&\E\Big[\big(Z_0+\sum\limits_{i_1}Z^{(i_1)}_f+\Psi(\sum\limits_{i_2}Z^{(i_2)}_b+
\sum\limits_{i_3}Z^{(i_3)}_{bf})\big)
\big(Z^*_0+\sum\limits_{i_1}Z^{(i_1)*}_f+\Psi^*(\sum\limits_{i_2}Z^{(i_2)*}_b+
\sum\limits_{i_3}Z^{(i_3)*}_{bf})\big)\Big]\\

&=&\E\Big[\big(Z_0+\sum\limits_{i_1}Z^{(i_1)}_f+
\Psi(\sum\limits_{i_2}Z^{(i_2)}_b+\sum\limits_{i_3}Z^{(i_3)}_{bf})\big)
\big(Z^*_0+\sum\limits_{i_1}Z^{(i_1)*}_f+
(\Psi^*(\sum\limits_{i_2}Z^{(i_2)}_b+\sum\limits_{i_3}Z^{(i_3)}_{bf}))^*\big)\Big]\\

&=&\E[Z_2Z_2^*]\\

\end{array}
$$
\end{proof}
Notice that $\Psi(Z^{(i_2)}_b)$ is a simple Boolean product and $\Psi(Z^{(i_3)}_{bf})$ can be written as $B_{i_3}Z^{(i_3)}_{f'}$ where $Z^{(i_3)}_{f'}$ is the type $(3)$ of product in Proposition-Definition \ref{defprop}.
According to the length of the word appearing in the expression of
$Z_2$ defined in Lemma \ref{lemma:6.7}, we can then rewrite $Z_2$ as
$$Z_2=b1_\A+\sum\limits_{k=1}^{n}\sum\limits_{r=1}^{m_k}a_{k,r,1}a_{k,r,2}\cdots a_{k,r,k},$$
where 
$a_{k,r,p}\in D_{\omega_{k,r}(p)}$ (the right face) for $p\geq 2$,
and $a_{k,r,1}\in S_{\omega_{k,r}(p)}$ (either left face or right face),
$\E(a_{k,r,p})=0$ and 
$ \omega_{k,r}:\{1,\cdots, k\}\rightarrow \I$ such that 
$\omega_{k,r}(1)\neq \cdots\neq \omega_{k,r}(k)$. 

We now have the following result.
\begin{lemma} \label{lemma:7.8}\normalfont
	Let $a_1\in\A_{\omega_1(1)}$, $a_i\in \A_{\omega_1(i),\f}$ for $i=2,...,n$ such that 
	$\E(a_i)=\E(\widetilde{a}_i)=0$ and 
	$\omega_1:\{1,\cdots,n\} \rightarrow \I$, $\omega_1(1)\neq\cdots\neq\omega_1(n)$. 
	Let $\widetilde{a}_1\in\A_{\omega_2(1)}$, $\widetilde{a}_j\in \A_{\omega_2(j),\f}$ for $i=2,...,m$ such that $\omega_2:\{1,\cdots, m\} \rightarrow \I$, $\omega_2(1)\neq\cdots\neq\omega_1(m)$.
	We then have
	$$\E[a_1a_2\cdots a_{n-1}a_n\widetilde{a}_m\widetilde{a}_{m-1}\cdots\widetilde{a}_1]=\delta_{n,m}\E[a_1a_2\cdots \E[a_{n-1}\E[a_n\widetilde{a}_m]\widetilde{a}_{m-1}]\cdots\widetilde{a}_1].$$
\end{lemma}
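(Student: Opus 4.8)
The plan is to work entirely in the reduced free product model fixed at the start of Section 6, where $\A_{i,\f}=\lambda_i(\LL(\X_i))$, $\A_{i,\bb}=P_i\lambda_i(\LL(\X_i))P_i$, and to read every moment as $\E[y_1\cdots y_L]=p\big(y_1\cdots y_L(1_\B)\big)$. (I read the centering hypothesis as $\E(a_i)=\E(\widetilde a_i)=0$ for \emph{every} index, including $i=1$; this is already needed in the case $n=1,\,m\geq 2$, where otherwise a surviving factor $\E(\widetilde a_1)$ makes the left side nonzero while $\delta_{n,m}=0$.)

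First I would isolate the single computational fact driving everything. Call $y_1\cdots y_L$ with $y_l\in\A_{\eta(l)}$ \emph{alternating} if $\eta(1)\neq\cdots\neq\eta(L)$; suppose each $y_l$ is centered and each interior factor $y_2,\dots,y_{L-1}$ lies in a free face $\A_{\eta(l),\f}$ (the two end factors may lie in either face). I claim $y_2\cdots y_L(1_\B)\in\mrx_{\eta(2)}\otimes_\B\cdots\otimes_\B\mrx_{\eta(L)}$, whence $\E[y_1\cdots y_L]=0$ as soon as $L\geq 2$. This I would prove by building the vector from the right: a centered end gives $y_L(1_\B)\in\mrx_{\eta(L)}$ since its $\B$-part is $\E(y_L)=0$, and a centered free factor $\lambda_i(\tau)$ with $\E_{\LL(\X_i)}(\tau)=0$, acting on a tensor whose leading leg sits at an index $\neq i$, merely prepends a leg by the formula $V_i^{-1}(\tau\otimes I)V_i$. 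Applying $y_1$ then either prepends one more leg (if $y_1$ is free) or annihilates the vector (if $y_1=P_{\eta(1)}(\cdot)P_{\eta(1)}$ is Boolean, because $P_{\eta(1)}$ kills $\mrx_{\eta(2)}\otimes_\B\cdots$ when $\eta(2)\neq\eta(1)$); in either case no $\B$-component remains.

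With this in hand I would prove the lemma by induction on $n+m$, peeling the central pair $a_n\widetilde a_m$. The base case $n=m=1$ is the tautology $\E[a_1\widetilde a_1]=\E[a_1\widetilde a_1]$. For the inductive step there are two cases. If $\omega_1(n)\neq\omega_2(m)$, the entire word $a_1\cdots a_n\widetilde a_m\cdots\widetilde a_1$ is alternating, centered, and free in its interior, so the structural fact forces $\E=0$; on the right side $\E[a_n\widetilde a_m]=\E(a_n)\E(\widetilde a_m)=0$ by freeness, so the nested expression vanishes too (and $\delta_{n,m}$ kills it when $n\neq m$). If $\omega_1(n)=\omega_2(m)=i$, I would write $a_n\widetilde a_m=\E[a_n\widetilde a_m]\,1_\A+c$ with $c$ centered in $\A_{i,\f}$: the $c$-term is a strictly shorter alternating centered word with free interior, hence $0$ by the structural fact, while the scalar $b:=\E[a_n\widetilde a_m]\in\B$ is absorbed into an adjacent free factor (say $\widetilde a_{m-1}$ when $m\geq2$, which keeps $b\widetilde a_{m-1}$ centered and in $\A_{\omega_2(m-1),\f}$), reducing the word to sizes $(n-1,m-1)$. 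The induction hypothesis then rebuilds precisely the claimed nested contraction, and $\delta_{n-1,m-1}=\delta_{n,m}$.

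The main obstacle, where all the care is concentrated, is the bookkeeping around the two ends $a_1,\widetilde a_1$: being possibly Boolean they project rather than prepend a leg, and in the degenerate cases $n=1$ or $m=1$ the merged factor $c$ (or the absorbed scalar $b$) lands at an end instead of the interior. I would make sure the structural fact is stated so as to permit Boolean ends, so that every such degenerate reduction still falls under it — for instance when $n=1$ the $c$-term is a single end factor followed by a centered free chain of total length $m\geq 2$, and when $m=1$ the scalar is absorbed into $a_{n-1}$ and one checks $\E[a_1 b]=\E(a_1)b=0$ directly. These verifications are routine once the structural fact is established, so I would dispatch them briefly.
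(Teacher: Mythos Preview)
Your proposal is correct and follows essentially the same route as the paper: both arguments peel off the central pair $a_n\widetilde a_m$ by induction, splitting into the two cases $\omega_1(n)\neq\omega_2(m)$ (where the whole word is alternating and centered, hence has zero expectation) and $\omega_1(n)=\omega_2(m)$ (where one recenters $a_n\widetilde a_m$ and applies the vanishing fact to kill the centered remainder). The only difference is presentational: the paper invokes Proposition~\ref{Free-boolean moments} as a black box for the vanishing of centered alternating words with free interior, whereas you re-derive that fact from scratch in the Fock model; your extra care with the possibly Boolean end factors $a_1,\widetilde a_1$ and the degenerate cases $n=1$ or $m=1$ makes explicit what the paper leaves implicit.
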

\begin{proof}
When $n=0$ or $m=0$, there is nothing to prove. 
	It is sufficient to prove that $$\E[a_1a_2\cdots a_{n-1}a_n\widetilde{a}_m\widetilde{a}_{m-1}\cdots\widetilde{a}_1]=\delta_{n,m}\E[a_1a_2\cdots a_{n-1}\E[a_n\widetilde{a}_m]\widetilde{a}_{m-1}\cdots\widetilde{a}_1].$$
	
	Notice that if $\omega_1(n)\neq\omega_2(m)$, then by Proposition \ref{Free-boolean moments}
	and the definition of freeness, then, by Proposition 6.8, we have
	 $$\E[a_1a_2\cdots a_{n-1}a_n\widetilde{a}_m\widetilde{a}_{m-1}\cdots\widetilde{a_1}]=\E[a_n\widetilde{a}_m]=0.$$ 
	On the other hand, if $\omega_1(n)=\omega_2(m)$,  notice that $a_n\widetilde{a}_m-\E[a_n\widetilde{a}_m]\in \A_{\omega_1(n)}$, $\omega_1(n)\neq \omega_1(n-1)$ and $\omega_1(n)\neq\omega_2(m-1)$, then we have 
	$$\E[a_1a_2\cdots a_{n-1}(a_n\widetilde{a}_m-\E[a_n\widetilde{a}_m])\widetilde{a}_{m-1}\cdots\widetilde{a}_1]=0,$$
	which is the desired equation.
\end{proof}

Therefore, we have the following equation.

$$\E[Z_2Z_2^*]=bb^*+\sum\limits_{k=1}^{n}\E\left[\left(\sum\limits_{r=1}^{m_k}a_{k,r,1}a_{k,r,2}\cdots a_{k,r',k}\right)\left(\sum\limits_{r'=1}^{m_k}a_{k,r',1}a_{k,r',2}\cdots a_{k,r',k}\right)^*\right].$$
Moreover, Lemma \ref{lemma:7.8} implies that the term $\E\big[(a_{k,r,1}a_{k,r,2}\cdots a_{k,r',k})(a_{k,r',1}a_{k,r',2}\cdots a_{k,r',k})^*]$ is not vanishing only if $\omega_{k,r}=\omega_{k,r'}.$  Let $\sim$ be the equivalence relation on $\{1,\cdots,m_k\}$ such that  $l_1\sim l_2 $ if and only if $\omega_{k,l_1,k}=\omega_{k,l_2,k}$. Let $\{V_1,\cdots, V_s\}$ be the family of equivalence classes of $\{1,\cdots,m_k\}$. Then  
\[
\begin{array}{crl}
&& \E\left[\left(\sum\limits_{r=1}^{m_k}a_{k,r,1}a_{k,r,2}\cdots a_{k,r',k}\right)\left(\sum\limits_{r'=1}^{m_k}a_{k,r',1}a_{k,r',2}\cdots a_{k,r',k}\right)^*\right]   \\
&=&\E\left[\left(\sum\limits_{r\in V_l}a_{k,r,1}a_{k,r,2}\cdots a_{k,r',k}\right)\left(\sum\limits_{r'\in V_l}a_{k,r',1}a_{k,r',2}\cdots a_{k,r',k}\right)^*\right]
\end{array}
\]

To show that $E[Z_2Z_2^*]$ is positive, we just need to show that 
\[
\E\left[\left(\sum\limits_{r\in V_l}a_{k,r,1}a_{k,r,2}\cdots a_{k,r',k}\right)\left(\sum\limits_{r'\in V_l}a_{k,r',1}a_{k,r',2}\cdots a_{k,r',k}\right)^*\right]\geq 0
\]
for all $k$. 

This is exactly the circumstance in the proof of \cite[Proposition 3.5.6]{Sp1}.   
Therefore,  we have reduced the positivity question of  $\E[ZZ^*] $ 
to a known result in free probability context. Since $Z$ is arbitrary, we thus have the following theorem.

\begin{theorem}\normalfont
	Let $\{(C_i,D_i)\}_{i\in \I}$ be a family of  $B$-faces  in a $B$-probability space $(\A, \E)$ , which generates $\A$. We assume that $C_i, D_i$ are $*$-subalgebras of $\A$ for all $\A$. For each $i$, let $ \A_i$ be the $*$-algebras generated by $C_i, D_i$.  We assume that the restriction of $\E$ to $\A_i$ is positive.  If the family $\{(C_i,D_i)\}_{i\in \I}$ is free-Boolean independent with amalgamation over $B$, then $\E$ is positive.
\end{theorem}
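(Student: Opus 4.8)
The plan is to verify positivity straight from the definition: since every positive element of $\A$ has the form $ZZ^*$, it suffices to show that $\E[ZZ^*]\ge 0$ in $B$ for an arbitrary $Z\in\A$. First I would invoke Proposition-Definition \ref{defprop} to write $Z$ as a finite sum $Z=Z_0+\sum_{i_1}Z_f^{(i_1)}+\sum_{i_2}Z_b^{(i_2)}+\sum_{i_3}Z_{bf}^{(i_3)}+\sum_{i_4}Z_{fb}^{(i_4)}+\sum_{i_5}Z_{fbf}^{(i_5)}$ of the six canonical word types. Expanding $ZZ^*$ and applying Corollary \ref{cor:6.8} and Corollary \ref{cor:6.9}, every product in which a $Z_{fb}$- or $Z_{fbf}$-type factor (or its adjoint) occurs has vanishing expectation, so $\E[ZZ^*]=\E[Z_1Z_1^*]$ with $Z_1=Z_0+\sum_{i_1}Z_f^{(i_1)}+\sum_{i_2}Z_b^{(i_2)}+\sum_{i_3}Z_{bf}^{(i_3)}$.

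The next step is to collapse the Boolean factors. Using Lemma \ref{lemma:7.5} and Lemma \ref{lemma:7.6}, which absorb all but the outermost Boolean block into a scalar from $B$, Lemma \ref{lemma:6.7} gives $\E[Z_1Z_1^*]=\E[Z_2Z_2^*]$ with $Z_2=Z_0+\sum_{i_1}Z_f^{(i_1)}+\Psi\!\big(\sum_{i_2}Z_b^{(i_2)}+\sum_{i_3}Z_{bf}^{(i_3)}\big)$. Each surviving summand of $Z_2$ is now an alternating word whose letters after the first lie in the right faces and are centered, so I would rewrite $Z_2=b1_\A+\sum_{k=1}^n\sum_{r=1}^{m_k}a_{k,r,1}\cdots a_{k,r,k}$ in the standard form recorded above, with $a_{k,r,1}\in S_{\omega_{k,r}(1)}$, $a_{k,r,p}\in D_{\omega_{k,r}(p)}$ for $p\ge2$, all factors centered, and $\omega_{k,r}(1)\neq\cdots\neq\omega_{k,r}(k)$.

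Now I would apply Lemma \ref{lemma:7.8} to $\E[Z_2Z_2^*]$. Matching a word of length $k$ against the adjoint of a word of length $k'$, the lemma forces $k=k'$ together with agreement of the index sequences at the contraction point, and produces the nested expectation $\E[a_{k,r,1}\cdots\E[a_{k,r,k}\,a_{k,r',k}^*]\cdots a_{k,r',1}^*]$. Consequently $\E[Z_2Z_2^*]=bb^*+\sum_{k}\sum_l\E\big[(\sum_{r\in V_l}\cdots)(\sum_{r'\in V_l}\cdots)^*\big]$, where the $V_l$ are the classes of the equivalence $r\sim r'\iff\omega_{k,r}=\omega_{k,r'}$. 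Each block is then precisely the Gram-type expression whose positivity is established in the free case in \cite[Proposition 3.5.6]{Sp1}: the innermost kernel $\E[a_{k,r,k}a_{k,r',k}^*]$ is a value of some $\E_i=\E|_{\A_i}$, which is positive by hypothesis, and the remaining centered left-face letters are freely independent, so the iterated positivity argument of that proposition applies verbatim. Summing the positive blocks together with $bb^*\ge0$ yields $\E[ZZ^*]\ge0$.

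The main obstacle is to see that, after the $\Psi$-absorption, the resulting expression matches the purely free setting exactly, i.e. that the Boolean faces now contribute only through the positive inner kernels $\E_i(aa^*)$ and no longer obstruct the nesting of Lemma \ref{lemma:7.8}. Once Lemmas \ref{lemma:7.5}--\ref{lemma:7.8} are in place this matching is clean and the positivity is inherited from \cite[Proposition 3.5.6]{Sp1}; since $Z$ was arbitrary, $\E$ is positive.
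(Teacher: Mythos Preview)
Your proposal is correct and follows the paper's argument essentially step for step: the same decomposition via Proposition-Definition \ref{defprop}, the same elimination of $Z_{fb}$- and $Z_{fbf}$-types using Corollaries \ref{cor:6.8} and \ref{cor:6.9}, the same $\Psi$-collapse through Lemmas \ref{lemma:7.5}--\ref{lemma:6.7}, the same nesting via Lemma \ref{lemma:7.8}, and the same hand-off to \cite[Proposition 3.5.6]{Sp1}. The only cosmetic difference is that you group by the full colour sequence $\omega_{k,r}$, whereas the paper groups only by the last index $\omega_{k,r}(k)$ and leaves the further vanishing to the iterative argument inside Speicher's proof; Lemma \ref{lemma:7.8} makes these two groupings equivalent.
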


\vspace{1cm}
{\bf Acknowledgement} The second-named author would like to thank Professor Hari Bercovici for the invitation to visit Indiana University,
where the authors can meet and part of this work was done. He also wants to thank Professor Alexandru Nica for his continued support. This project was partially supported by NSFC No. 11501423, 11431011.

\vspace{1cm}

\bibliographystyle{plain}

\bibliography{references}
%
%
%
%

\end{document}